\documentclass[12pt]{amsart}

\usepackage[left=1in, right=1in, top=.75in, bottom=.75in]{geometry} 
\setlength{\parindent}{15pt} 

\setlength{\parskip}{.05in}

\setlength{\tabcolsep}{15pt}

\usepackage[utf8]{inputenc}
\usepackage{amssymb, enumerate}
\usepackage{amsmath, eqnarray, amsthm}
\usepackage{graphicx} 
\usepackage{wrapfig} 
\graphicspath{ {./images/} }
\usepackage{rotating}
\usepackage{bm}
\usepackage{verbatim}
\usepackage{xcolor}
\usepackage[dvipsnames]{xcolor}
\usepackage[normalem]{ulem}
\usepackage{pgfplots} 
\usepgfplotslibrary{fillbetween}
\usetikzlibrary{patterns}


\newcommand{\Z}{\mathbb{Z}}


\newcommand{\RP}{{\mathbb{RP}}{}}


\newcommand{\lan}{\langle}
\newcommand{\ran}{\rangle}

\newcommand{\disp}{\displaystyle}


\newcommand{\Mod}{\operatorname{Mod}}

\newcommand{\Diff}{\operatorname{Diff}}

\usepackage{soul}\usepackage[T1]{fontenc}
\usepackage{mathtools}
\DeclareMathAlphabet{\mathbbmsl}{U}{bbm}{m}{sl}

\newtheorem{theorem}{Theorem}
\newtheorem{corollary}{Corollary}
\newtheorem{lemma}{Lemma}
\newtheorem{proposition}{Proposition}

\theoremstyle{definition}
\newtheorem{definition}{Definition}[section]

\theoremstyle{definition}

\theoremstyle{definition}
\newtheorem{remark}{Remark}[section]

\usepackage{tikz}
\usepackage{tikz-cd}
\usetikzlibrary{positioning}
\usetikzlibrary{knots}
\usetikzlibrary{decorations.pathreplacing}
\usetikzlibrary{intersections}

\usepackage{hyperref}
\usepackage{hhline}


\newcommand{\CPsum}[2]{#1 \mathbb{CP}^2 \# #2 \overline{\mathbb{CP}^2}}
\newcommand{\Fibersumd}[3]{#1 \#_{#3} #2} 
\newcommand{\set}[1]{\{#1\}} 

\renewcommand{\phi}{\varphi}

\author[Mihail Arabadji]{Mihail Arabadji}
\address{Department of Mathematics and Statistics, University of Massachusetts, Amherst, MA 01003-9305, USA}
\email{marabadji@umass.edu}

\author[Porter Morgan]{Porter Morgan}
\address{Department of Mathematics and Statistics, University of Massachusetts, Amherst, MA 01003-9305, USA}
\email{pamorgan@umass.edu}

\title[Even 4-manifolds with order two $\pi_1$]{Irreducible 4-manifolds with order two fundamental group and even intersection form}

\begin{document}	
	\begin{abstract}
		We construct smooth manifolds with order two $\pi_1$ and even intersection forms which are \emph{irreducible}, meaning they do not decompose into non-trivial connected sums. Their intersection forms being even implies that their universal covers admit spin structures. Such manifolds are determined up to homeomorphism by their Euler characteristic $e$, signature $\sigma$, and whether they themselves are also spin. In the case that the manifold is spin, we construct irreducible manifolds for all but $17$ realizable coordinates in the region of the $(e,\sigma)$--plane with $c_1^2 = 2e+3\sigma \geq 0$ up to orientation. In the case that the manifold is non-spin, we construct irreducible manifolds for all but $24$ realizable coordinates in the region of the $(e,\sigma)$--plane with $\sigma/8<-8$ and $c_1^2/4>9$, again up to orientation. We construct these manifolds by taking equivariant fiber sums of Lefschetz fibrations and other symplectic manifolds which are simply--connected and spin. Along the way, we develop machinery to track when the spin structure is preserved during these operations.
	\end{abstract}
\maketitle

\section{Introduction}

A central question in the topology of four-dimensional manifolds is their classification, subject to presupposed restrictions. Research in this direction is largely propelled by the celebrated results of Freedman, who showed that the homeomorphism type of a smooth, closed, simply--connected 4-manifold is determined by its intersection form. Thus, determining the homeomorphism type of a smooth, simply--connected $4$--manifold is reduced to computing its intersection form, which is given by its rank, signature, and parity. A popular game among $4$--dimensional topologists is to realize any $(a,b) \in \Z_{\geq 0} \oplus \Z$ by a smooth, closed 4-manifold whose second Betti number is $a$ and signature is $b$. In the simply--connected case, if you additionally know the parity of the manifold's intersection form, you then know the manifold's homeomorphism type. This game is coined a \textit{geography problem}, and variations of it have been played by an abundance of topologists. In general, the aim is to ``realize'' each coordinate in your \emph{geography plane}, your set of reachable points, by constructing a $4$--manifold whose topological invariants match the coordinate values.

Without any additional restrictions, this game is not very interesting. For instance, if you wanted to fill the geography plane for simply--connected $4$--manifolds with odd intersection forms, parametrized by $b_2^+$ and $b_2^-$, you would immediately realize each $(a,b)$--coordinate with $\CPsum{a}{b}$. The ultimate goal is to construct more interesting examples of manifolds for each homeomorphism class in your plane. For this reason, you often impose that in order for a manifold $M$ to ``count'' as realizing a coordinate in your plane, $M$ must be be \textit{irreducible}, meaning that if $M$ is diffeomorphic to a connected sum $A\# B$, either $A$ or $B$ is homeomorphic to $S^4$. Irreducible manifolds often  carry exotic smooth structures; $M$ is an \textit{exotic} $N$ whenever $M$ and $N$ are homeomorphic but not diffeomorphic. This is motivated by debatably the most enigmatic question in topology, which stems from Poincaré: is there an exotic $S^4$ ?

There has been a lot of progress in the simply-connected irreducible geography. Various constructions of simply--connected manifolds were shown to be irreducible as early as the 90's. Later, constructions were given that covered entire regions of the geography plane, such as the symplectic manifolds in \cite{Park_Szabo} and \cite{ABBKP}. Departing from the simply-connected case, Torres provided irreducible constructions for the geographies of closed, symplectic, manifolds with abelian fundamental groups of two generators  in \cite{Torres_nonspin} and \cite{torres_spin}. When working in the symplectic category, the geography plane is restricted to coordinates with $b_2^+$ odd. More recently, Baykur, Stipsicz, and Szabó \cite{baykur2024smooth} gave an extensive treatment to the geography problem for manifolds with finite cyclic fundamental group and odd intersection forms. Unlike previous constructions, their constructions are not always symplectic, so their geography plane includes all $b_2^+$ values, regardless of parity.

We aim to address the irreducible geography problem for manifolds with order two fundamental group. Our constructions are not necessarily symplectic, so may have $b_2^+$ odd or even. That said, we prove that our constructions are irreducible by showing that their double covers are minimal symplectic. Since minimal symplectic manifolds have non-negative first Chern number \cite{Taubes}, our geography plane also inherits this constraint. 

Unlike the simply--connected case, the homeomorphism type of closed, smooth $4$--manifolds with order two fundamental group is \emph{not} determined entirely by the intersection form. The manifold's \emph{$w_2$--type}, which depends on whether it and its universal cover admit spin structures, must also be taken into consideration. There are three possible $w_2$-types, which in turn give three irreducible geography problems. The irreducible geography for $w_2$-type $(i)$ was addressed in \cite{ArabadjiMorgan}, where we found irreducible manifolds for each coordinate in our geography plane except for seven coordinates. In the current paper, we cover the remaining two geographies, types $(ii)$ and $(iii)$. These are $4$-manifolds with spin double covers. Equivalently, these are $4$--manifolds with even intersection forms. Hence, all manifolds constructed in this paper have an intersection form isomorphic to block sums of the standard even matrices $-E_8$ and $H$. We start with our results for $w_2$-type $·(ii)$, which are manifolds admitting spin structures.

\begin{theorem}
	Let $a$ and $b$ be non-negative integers satisfying $b\geq 4a-1$. Then for all except $17$ $(a,b)$ coordinates, there exists an irreducible, closed, spin $4$--manifold with order two fundamental group whose intersection form is $2a (\pm E_8)\oplus bH$.
\end{theorem}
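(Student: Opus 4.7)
The plan is to construct the required manifolds as quotients of simply--connected spin symplectic $4$--manifolds by free, spin--structure--preserving involutions, built up by the equivariant fiber sum techniques advertised in the abstract. First, I would assemble a small library of seed pairs $(\widetilde{Y},\tau)$, where $\widetilde{Y}$ is a closed, simply--connected, spin symplectic $4$--manifold and $\tau$ is a free symplectic involution that is compatible with a chosen spin structure, so that $Y=\widetilde{Y}/\tau$ is a spin symplectic $4$--manifold with $\pi_1(Y)=\Z/2$ realizing intersection form $2a_0(\pm E_8)\oplus b_0 H$ for a few specific $(a_0,b_0)$. Natural candidates come from spin genus--$2$ Lefschetz fibrations over $S^2$ whose monodromy factorizations admit a hyperelliptic--type involution (giving small $(a_0,b_0)$), together with a $K3$--built seed obtained by an equivariant construction that ensures the quotient is spin rather than Enriques.

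Next, I would introduce two families of equivariant fiber sum moves along square--zero symplectic surfaces. The first fiber--sums $Y$ with a copy of a spin Lefschetz fibration along a regular fiber, increasing $b$ by a fixed amount while keeping $a$ fixed; the second performs a $K3$--type equivariant fiber sum that adds two $E_8$ summands and a controlled number of $H$ summands, advancing $a$. Composing these moves from each seed produces an explicit affine sublattice of realizable $(a,b)$. One then shows, by an arithmetic argument using the bound $b\geq 4a-1$, that the union of these orbits covers every lattice point in the geography plane except a finite corner set near $b=4a-1$; tabulating this corner set gives the $17$ exceptions. Throughout, the spin--tracking machinery developed earlier in the paper is invoked to certify at each stage that the spin structure descends through the equivariant fiber sum.

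For irreducibility, the key observation is that each manifold built this way has double cover $\widetilde{M}$ equal to an ordinary symplectic fiber sum of the seed covers along symplectic surfaces of square zero. Arranging the seed covers to be minimal symplectic, Usher's minimality theorem implies $\widetilde{M}$ is itself minimal symplectic. A smooth connected sum decomposition $M\cong A\#B$ with neither summand a homotopy $S^4$ would then pull back to a decomposition of $\widetilde{M}$ with nontrivial summands (since $\pi_1(M)=\Z/2$ forces the decomposition to lift), contradicting minimality by Hamilton--Kotschick--type arguments. This gives irreducibility of every $M$ produced by the construction.

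The main obstacle is the spin bookkeeping at every equivariant fiber sum. Whether a free $\Z/2$ symplectic quotient is spin rather than of mixed $w_2$--type depends delicately on how the involution interacts with a spin structure on the neighborhoods of the surfaces being glued, and on which rim tori and vanishing cycles carry nontrivial $w_2$--pairings. Verifying preservation at each move requires precisely the criteria developed earlier. A secondary difficulty is the corner--case analysis: the small--$a$, small--$b$ regime near the line $b=4a-1$ is exactly where the seeds are too coarse to cover every point, and isolating the $17$ exceptional coordinates there demands a careful enumeration of which affine cosets of the move--lattice meet the region $\{b\geq 4a-1\}$.
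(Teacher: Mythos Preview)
Your proposal has the right high-level architecture---quotients of simply-connected spin symplectic manifolds by free involutions, spin-structure tracking via the machinery of Section~\ref{sec:spin}, and irreducibility through minimality of the double cover---and this is indeed what the paper does. However, the implementation differs in ways that matter, and one of your claims is factually wrong.

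The paper does not work by iterating ``moves'' from seeds. Instead it invokes the $\Z_2$--construction directly on known parametrized families of simply--connected spin symplectic manifolds: the Park--Szab\'o manifolds $M_n(s)$ (with a modification $M'_n(s)$ carrying a suitable genus--$2$ surface), and the Baykur--Hamada manifolds $Z_n$ (for the $\sigma=0$ line), further combined with elliptic surfaces $E(2m)$ via ordinary fiber sums \emph{before} doubling. The input data is always a pair $(X,\Sigma)$ with $X$ simply-connected spin and $X-\nu\Sigma$ simply-connected; Theorem~\ref{thm:Z2_spin} then guarantees the quotient is spin. Your proposed seeds---``spin genus--$2$ Lefschetz fibrations whose monodromy admits a hyperelliptic-type involution''---are not what the paper uses, and it is not clear they would yield free involutions (hyperelliptic involutions have fixed points on the fiber).

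More seriously, your claim that the $17$ exceptions form a corner set ``near the line $b=4a-1$'' is incorrect. Inspecting Figures~\ref{fig:plane1} and~\ref{fig:plane2}, the missing points lie almost entirely along the lines $a=0$ (where the only available building block is Baykur--Hamada's $Z_n$, which starts at $n\geq 5$ or $n\geq 6$) and $a=1$ (i.e.\ $\sigma=-16$, where $M_n(s)$ begins at $s=0$ but the genus--$2$ variant needs an extra $E(2)_K$). These exceptions arise because the existing simply--connected families have parameter thresholds, not because of any obstruction near $c_1^2=0$. Your arithmetic argument would therefore not locate them correctly.
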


The condition $b\geq 4a-1$ is equivalent to $X_{a,b}$ having a non-negative first Chern number. This is a consequence of the double cover being minimal symplectic, which is how we prove irreducibility. Next we state our results for $w_2$ type $(iii)$. These are manifolds which are \emph{not} spin, but whose double covers are spin.

\begin{theorem}	
	Let $a$ and $b$ be non-negative integers such that \begin{enumerate}[(i)]
		\item if $b$ is odd, assume $a\neq 0$, and $b\geq 2a-1.$
		\item if $b$ is even, assume $a\notin \{0,1,2,4,6,8\}$ and $b>2a+6$.
	\end{enumerate} Then for all but $24$ $(a,b)$ pairs, there exists an irreducible, closed, orientable, non-spin $4$--manifold with order two fundamental group whose intersection form is $a(\pm E_8)\oplus bH$.
\end{theorem}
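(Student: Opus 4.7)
My approach mirrors the equivariant-fiber-sum strategy set up earlier in the paper. I would construct a simply-connected, symplectic, spin 4-manifold $\wt X$ carrying a free involution $\tau$ such that $X=\wt X/\tau$ realizes the intersection form $a(\pm E_8)\oplus bH$, verify that $X$ is non-spin (so that $\wt X\to X$ realizes $w_2$-type (iii)), and conclude irreducibility from minimality of $\wt X$. A short computation using $\chi(\wt X)=2\chi(X)$ and $\sigma(\wt X)=2\sigma(X)$, together with $b_1(X)=0$, forces $b_2(\wt X)=2+16a+4b$ and $\sigma(\wt X)=\pm 16a$, so the target intersection form for the spin cover is $2a(\pm E_8)\oplus (2b+1)H$.

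I would obtain $\wt X$ as an equivariant symplectic fiber sum of simply-connected spin building blocks -- copies of $E(n)$, twisted Lefschetz fibrations, and the other symplectic spin manifolds already used in the $w_2$-type (ii) construction -- each carrying a free involution preserving a symplectic torus $T$ along which the sum is performed. The parity of $b$ is dictated by which combinations of blocks are admissible: the odd-$b$ configurations realize precisely the pairs satisfying $b\geq 2a-1$, matching Taubes's inequality $c_1^2(\wt X)=2e(\wt X)+3\sigma(\wt X)=8-16a+8b\geq 0$ for a minimal symplectic cover in the $-E_8$ orientation; the even-$b$ configurations have a larger minimum $H$-content, giving the stronger bound $b>2a+6$ and excluding the small values $a\in\{0,1,2,4,6,8\}$ that no combination of even-$b$ blocks can hit. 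Minimality of $\wt X$ follows from Usher's theorem for fiber sums along essential tori (each building block being minimal), and $X$ is then irreducible since $b_2^+(\wt X)>1$ and the involution is free and symplectic.

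The key technical step, and what distinguishes this theorem from the $w_2$-type (ii) case, is the \emph{non-spin descent}: although $\wt X$ is spin, the involution $\tau$ must be arranged so that the spin structure does \emph{not} descend to $X$. The obstruction lives in a $\Z/2$-equivariant cohomology group and is determined by how $\tau$ acts on the framings of the boundary $T^3$'s in each equivariant fiber sum; the spin-tracking machinery promised in the abstract is exactly what I would use to compute this obstruction across a chain of fiber sums and to ensure the resulting class is nonzero. I expect this to be the main hurdle, together with the bookkeeping required to enumerate the $24$ missing coordinates. Those exceptional pairs almost certainly correspond to low-$a$ configurations near the boundary of the geographic region that no combination of the standard building blocks reaches, and resolving them case-by-case -- either by producing bespoke twisted Lefschetz fibrations with the correct involutions or by confirming they are genuine gaps in the method -- will be the most tedious part of the proof.
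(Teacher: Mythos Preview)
Your broad architecture --- build a simply-connected spin symplectic $\wt X$ with a free involution, quotient, and deduce irreducibility from minimality of the cover --- matches the paper. But the heart of the type-$(iii)$ argument is the non-spin descent, and here your plan has a genuine gap.

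You propose to start from the \emph{same spin building blocks} used for type $(ii)$ and then ``arrange $\tau$ so the spin structure does not descend,'' detecting this by an unspecified $\Z/2$-equivariant obstruction on the $T^3$ framings. The paper's Section~\ref{sec:spin} machinery is built precisely to make spin \emph{descend}; you would need a complementary analysis showing concretely which gluings obstruct descent, and you give no candidate. More importantly, this is not how the paper proceeds. The paper's building blocks for type $(iii)$ are deliberately \emph{non-spin} manifolds that are ``almost spin'': Lefschetz fibrations $X\to S^2$ (coming from chain relations on $\Sigma_2,\Sigma_3,\Sigma_4$, or odd $E(n)$) such that $X-\nu F$ is spin while $X$ itself is not. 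The $\Z_2$-construction is taken along the regular fiber $F$ (often of genus $2$--$4$, not a torus), so Proposition~\ref{prop:spin_preserving_map} makes the double cover spin, while the quotient $Z$ is certified non-spin by one of two concrete mechanisms: when $\sigma(Z)\equiv 8\bmod 16$, Rokhlin's theorem does it immediately; when $\sigma(Z)\equiv 0\bmod 16$, a $(-1)$-section of the fibration, positioned through a fixed point of the fiber reflection $r$, descends to an embedded $\RP^2$ of odd self-intersection in $Z$, and Proposition~\ref{prop:w_2_nonorientable_surface} finishes. Fiber-summing these blocks with $M_n(s)$ or $Z'_m$ along an auxiliary torus $\Lambda$ disjoint from $F$ then sweeps out the claimed region.

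So the missing idea is: do not try to spoil spin descent by tweaking the involution on spin blocks; instead use non-spin blocks whose fiber complements are spin, and read off non-spinness of the quotient from Rokhlin or from an explicit odd-square $\RP^2$ inherited from a $(-1)$-section.
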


The value $b$ is the same as the manifold's $b_2^+$ value. When $b_2^+$ is odd, we give irreducible constructions for the entire $c_1^2\geq 0$ geography plane except for the line $\sigma = 0$ and finitely many missing points. Constructing irreducible manifolds with $w_2$-type $(iii)$ and $b_2^+$ even has proven to be more challenging. In this case, coordinates with $\sigma \equiv 0 \bmod 16$ and $c_1^2 \in \{4, 12, 20, 28, 36\}$, as well as coordinates with $\sigma \in \{0,8,16,32, 48, 64\}$, are yet to be realized. We leave these regions for future work. Just like for $w_2$-type $(ii)$, we will rely on our constructions having non-negative first Chern numbers to prove irreducibility.

\subsubsection*{Methods} Our main construction tool in this paper is the $\mathbbmsl{Z}_2$\emph{--construction}. This procedure, pioneered in \cite{baykur2024smooth}, involves taking equivariant fiber sums to construct minimal symplectic simply--connected manifolds equipped with free orientation-preserving involutions. We employ this technique on a variety of $4$--manifolds, including Lefschetz fibrations as well as previous constructions of spin, simply--connected, symplectic manifolds such as in \cite{Park_Szabo} and \cite{baykurhamadaspin}. We often fiber sum a few of these constructions together before performing the $\Z_2$--construction, and in this sense they serve as our ``building blocks''. When working with Lefschetz fibrations, we sometimes use torus surgery to reduce the fundamental group of the total space. Because all of our manifolds have spin double covers, we need to ensure that the spin structures of our building blocks are preserved throughout these operations. We lay out groundwork for this in Section \ref{sec:spin}.

We pause and expound briefly on the manifold's $w_2$-type. To show that a manifold has $w_2$-type $(i)$, it suffices to find an embedded surface of odd self-intersection. By contrast, it's harder to discern if a manifold has $w_2$-type $(ii)$ or $(iii)$. Because manifolds with order two $\pi_1$ have $2$--torsion in their first homology, being spin is \emph{not} equivalent to having an even intersection form, as is the case for simply--connected manifolds. By extending Wu's formula to the $2$-torsion case, we get conditions in terms of $\Z_2$-homology suitable for distinguishing $w_2$-types $(ii)$ and $(iii)$. Using these conditions, we deduce that the $\Z_2$--construction of spin Lefschetz fibrations will have $w_2$-type $(ii)$. On the other hand, we realize manifolds with $w_2$-type $(iii)$ by performing the $\Z_2$--construction on Lefschetz fibrations which are ``almost spin,'' in the sense that the complement of a regular fiber is spin.

\subsubsection*{Outline of paper} We begin by providing the necessary definitions and background information in Section \ref{sec:background}. Then in Section \ref{sec:spin}, we provide a detailed analysis of spin structures on $4$--manifolds, and how they behave under the $\Z_2$--construction and torus surgery. We then begin our constructions.  We have two different geographies to fill depending on the $w_2$-type, and we break these two geographies up further into two cases: $b_2^+$ even and $b_2^+$ odd. Section \ref{sec:type2} covers the $w_2$-type $(ii)$ geography, which is for spin manifolds. Section \ref{sec:type2_even} handles the constructions for $b_2^+$ even, and Section \ref{sec:type2_odd} handles $b_2^+$ odd. Then in Section \ref{sec:type3}, we move on to $w_2$-type $(iii)$, which are non-spin manifolds with spin double covers. Similar to the previous geography, Section \ref{sec:type3_even} covers the $b_2^+$ even case and Section \ref{sec:type3_odd} covers the $b_2^+$ odd case.  For both $w_2$-types, it's generally easier to find constructions in the $b_2^+$ odd case than the $b_2^+$ even case. We end each of these four blocks of constructions with graphs showing which parts of our geography were reached.

\subsubsection*{Acknowledgments} The authors would like to thank their advisor, \.{I}nan\c{c} Baykur, for his guidance and support. They are also grateful to Zoltán Szabó for his feedback on Section \ref{sec:spin}. Thanks also to Kerem \.{I}nal for many helpful conversations, and to Rafael Torres for insightful emails.

\section{Background}\label{sec:background}

\subsubsection*{Conventions} All $4$--manifolds will be assumed to be orientable, unless otherwise stated. The mapping class group of a surface $F$ is denoted $\Mod(F)$, and $t_c$ denotes a right-handed Dehn twist about a simple closed curve $c$.

\subsection{Topological classification of manifolds with small $\mathbf{\pi_1}$.}

Freedman showed that a simply-connected, smooth, closed 4-manifold $M$ is classified up to homeomorphism by its intersection form $Q_M$. The intersection form is itself determined by its number of positive and negative eigenvalues, along with its parity, which is even if $Q_M(a,a)$ is even for all $a\in H_2(M,\Z)$ and odd otherwise. This is curiously connected to the notion of spin structure; in the simply-connected case, $Q_M$ is even if and only if $M$ admits a spin structure.

\begin{definition}
	Let $M$ be an oriented, smooth manifold with $E$ being the principal $SO(4)$-bundle associated to the tangent bundle $TM$. Then a \textit{spin structure} on $M$ is a principal $Spin(4)$-bundle over $M$ that double covers $E$ fiberwise, where $Spin(4)$ is the (universal) double cover of $SO(4)$.
\end{definition}

We often say that $M$ is spin whenever $M$ admits a spin structure, without worrying about a specific bundle. The connection between spin structures and Freedman's theorem is highlighted by its amazing generalization to other fundamental groups, proven by Hambleton-Kreck. We will use the following generalization to order two fundamental groups.

\begin{theorem}[\cite{HambletonKreck}]
	Let $M$ be a closed, smooth, oriented 4-manifold with order two fundamental
	group. Then M is classified up to homeomorphism by its
	intersection form on $H_2(M,\Z)/Tor$ and its $w_2$-type.
\end{theorem}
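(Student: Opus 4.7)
The plan is to deduce the theorem from Kreck's modified surgery theory combined with Freedman-Quinn topological four-manifold machinery, which is essentially what Hambleton and Kreck carry out. The first step is to identify the normal $1$-type $B\to BSTOP$ for each of the three $w_2$-types of a closed oriented $4$-manifold $M$ with $\pi_1(M)=\Z/2$. For $w_2$-type $(ii)$ this is $B(\Z/2)\times B\Spin$; for $w_2$-type $(i)$ it is $B(\Z/2)\times BSTOP$ with the identity map to $BSTOP$; and for $w_2$-type $(iii)$ it is the twisted fibration where the pullback of the generator of $H^2(B(\Z/2);\Z/2)$ cancels the tangential $w_2$, so that the $2$-fold cover is spin. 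Every $M$ of the prescribed $w_2$-type admits a normal $1$-smoothing factoring through the corresponding $B$.

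Next, invoke Kreck's theorem: two closed oriented $4$-manifolds sharing a normal $1$-type are \emph{stably} homeomorphic, i.e. become homeomorphic after connect sum with some number of copies of $S^2\times S^2$, if and only if they represent the same class in the topological bordism group $\Omega_4^{STOP}(B)$. For each of the three normal $1$-types above, compute $\Omega_4^{STOP}(B)$ using the Atiyah-Hirzebruch spectral sequence over $B(\Z/2)$; the bordism invariants reduce to the signature together with a rank/parity datum. One then matches these with the invariants in the statement: the signature is $\sigma(Q_M)$, the rank of $H_2(M,\Z)/\mathrm{Tor}$ is recovered from the Euler characteristic (since $H_1(M;\Q)=0$ when $\pi_1(M)=\Z/2$), and the parity of $Q_M$ is encoded in the $w_2$-type. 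Thus the bordism class is determined by the intersection form on $H_2(M,\Z)/\mathrm{Tor}$ together with the $w_2$-type.

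The main obstacle is upgrading stable homeomorphism to honest homeomorphism. This is where the hypothesis $|\pi_1|=2$ is essential: $\Z/2$ is a ``good'' group in Freedman's sense, so the topological disc embedding theorem is available and with it topological surgery and the topological $s$-cobordism theorem in dimension four. One also needs a cancellation result for hyperbolic stabilizations of the equivariant intersection form, viewed as a unimodular Hermitian form over $\Z[\Z/2]$, to show that two such $M$'s were already homeomorphic before the $S^2\times S^2$-sums were added. This algebraic cancellation over $\Z[\Z/2]$, together with the careful passage from the equivariant form to the $\Z$-valued form on $H_2/\mathrm{Tor}$ that actually appears in the statement, is the subtlest part of the argument and is the reason the classification is special to small fundamental groups.
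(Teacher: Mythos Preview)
The paper does not give a proof of this theorem: it is stated as background and attributed to Hambleton--Kreck via the citation \cite{HambletonKreck}, with no argument supplied in the present paper. So there is no ``paper's own proof'' to compare your proposal against.

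That said, your outline does track the actual Hambleton--Kreck strategy (normal $1$-types, Kreck's modified surgery to get stable classification via $\Omega_4^{STOP}(B)$, then cancellation over $\Z[\Z/2]$ combined with Freedman--Quinn to remove the $S^2\times S^2$ stabilizations). As a sketch this is accurate in spirit, though several of the steps you describe---the bordism computations for each normal $1$-type, and especially the algebraic cancellation for Hermitian forms over $\Z[\Z/2]$ and its reduction to the $\Z$-valued form on $H_2/\mathrm{Tor}$---are substantial results that you invoke rather than prove. If you intend this as more than a pointer to the literature, those are the places where genuine work would be required.
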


Here, the $w_2$-type of $M$ can have 3 values, depending on spinness of $M$ and its double cover $\widetilde{M}$; type $(i)$ means that both $M$ and $\widetilde{M}$ are non-spin, type $(ii)$ means that both of them are spin, and type $(iii)$ means that $M$ is not spin while $\widetilde{M}$ is. In practice, we won't use the original definition of the spin structure but the equivalent characterization: an orientable 4-manifold $M$ is spin if and only if the second Stiefel-Whitney class, $w_2(M)$, vanishes. For better or worse, an abuse of notation for the $w_2$ symbol takes place as a prevailing convention in the literature.

This paper is concerned with manifolds whose intersection forms are even (and hence are, up to isomorphism, block sums of copies of $H$ and $\pm E_8$). Given this, the following formulas for the algebraic invariants of even $4$-manifolds will be handy.

\begin{proposition}
	Given a closed $4$--manifold $X$ with $b_1(X)=0$ and $Q_X = n(-E_8)\oplus \ell H $ with $n,\ell \in \Z_{\geq 0}$, we have the following algebraic invariants for $X$:
	\begin{align*}
		& e(X) = 2+2\ell +8n & \sigma(X) = -8n\\
		& b_2^+(X) = \ell & b_2^-(X) = \ell +8n\\
		& c_1^2(X) = 4+4\ell -8n & \chi_h(X) = (1+\ell)/2
	\end{align*}
\end{proposition}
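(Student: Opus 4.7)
The plan is to deduce all six invariants by reading off rank and signature from the intersection form, using Poincaré duality to control the remaining Betti numbers, and then invoking the standard definitions of $c_1^2$ and $\chi_h$ for closed oriented $4$--manifolds.

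First I would compute $b_2(X)$ and $\sigma(X)$ directly from $Q_X$. The form $-E_8$ is negative definite of rank $8$, so it contributes $8$ to $b_2$ and $-8$ to $\sigma$; the hyperbolic form $H$ is rank $2$ with signature $0$. Summing over the $n$ copies of $-E_8$ and $\ell$ copies of $H$ gives $b_2(X) = 8n + 2\ell$ and $\sigma(X) = -8n$. Solving the linear system $b_2^+ + b_2^- = b_2(X)$ and $b_2^+ - b_2^- = \sigma(X)$ then yields $b_2^+(X) = \ell$ and $b_2^-(X) = \ell + 8n$.

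Next I would compute the Euler characteristic. Since $X$ is closed and oriented, Poincaré duality gives $b_3(X) = b_1(X) = 0$, and of course $b_0(X) = b_4(X) = 1$ as $X$ is closed and connected. Therefore
\begin{equation*}
	e(X) = b_0 - b_1 + b_2 - b_3 + b_4 = 1 + (8n + 2\ell) + 1 = 2 + 2\ell + 8n.
\end{equation*}

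Finally, I would use the standard identities $c_1^2(X) = 2e(X) + 3\sigma(X)$ and $\chi_h(X) = \tfrac{1}{4}\bigl(e(X) + \sigma(X)\bigr)$, which define these invariants for closed oriented $4$--manifolds (and agree with the Chern number and holomorphic Euler characteristic in the almost-complex case). Substituting the already-computed values gives
\begin{equation*}
	c_1^2(X) = 2(2+2\ell+8n) + 3(-8n) = 4 + 4\ell - 8n, \qquad \chi_h(X) = \tfrac{1}{4}(2+2\ell+8n-8n) = \tfrac{1+\ell}{2}.
\end{equation*}
There is no real obstacle here; the only non-algebraic input is the use of Poincaré duality to eliminate $b_3$, and the rest is bookkeeping once the rank and signature of $-E_8$ and $H$ are recalled.
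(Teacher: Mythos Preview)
Your proposal is correct and complete. The paper does not provide a proof of this proposition at all, treating it as an elementary computation; your argument is exactly the straightforward bookkeeping any proof would consist of, so there is nothing to compare.
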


\subsection{Symplectic machinery} \label{sec:symp_background} There are a few topics from symplectic topology that we need to review. We begin with Lefschetz fibrations, which are important examples of symplectic manifolds. For more exposition on this family of fibrations, see \cite[Chapter 8]{GompfStip}.

\begin{definition}
	Let $X$ and $\Sigma$ be an oriented 4-manifold and surface, respectively. A smooth surjection $f\colon X\to \Sigma$ is called a Lefschetz fibration if it has finitely many critical points, and around each critical point there are orientation--preserving charts where $f$ is locally modeled by the map $g\colon\mathbb{C}^2\to \mathbb{C}$ defined as $g(z_1,z_2)=z_1z_2$. The genus of $f$ refers to the genus of a regular fiber.
\end{definition}

To a Lefschetz fibration $f\colon X\to \Sigma$ with regular fiber $F$ and $n$ singularities, there's a prescribed composition of Dehn twists, $\mu := t_{c_n}\circ t_{c_{n-1}}\circ \cdots \circ t_{c_1}$ in $\Mod(F)$. This composition is the \emph{global monodromy} associated to $f$, and each singular fiber is obtained from $F$ by collapsing the simple closed curve $c_i\subset F$ to a point. The curves $\{c_i\}$ are the \emph{vanishing cycles} of the Lefschetz fibration. If $\partial \Sigma = S^1$, then the Lefschetz fibration can be capped off with a copy of $F\times D^2$ to obtain a Lefschetz fibration on a closed $4$--manifold precisely when $\mu = 1$ in $\Mod(F)$. Additionally, a \emph{section} of $f$ is a map $s:\Sigma\hookrightarrow X$ such that $f\circ s:\Sigma\to \Sigma$ is the identity.

Stipsicz \cite{StipsiczSpinLF} and later Baykur, Hamada \cite{baykurhamadaspin} showed how to determine if a Lefschetz fibration over any genus surface admits a spin structure by examining its monodromy and sections. Recall that a \emph{quadratic form} on an orientable surface $F$ is a map $q\colon H_1(F;\Z_2)\to \Z_2$ satisfying $q(c_1+c_2)=q(c_1)+q(c_2)+c_1\cdot c_2$, where $c_1\cdot c_2$ denotes the algebraic intersection. Quadratic forms are used to define spin structures on surfaces, and may also be used to define spin structures on Lefschetz fibrations. We will use the following theorem over $D^2$. 

\begin{theorem}\label{thm:stip1}
	The total space of a Lefschetz fibration $f\colon X\to D^2$ with regular fiber $F$ admits a spin structure if and only if there exists a quadratic form on $F$ evaluating to one on each vanishing cycle of $f$.
\end{theorem}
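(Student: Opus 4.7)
The plan is to decompose $X$ as $(F\times D^2)\cup\bigcup_i h_i$, where $h_i$ is the Lefschetz $2$-handle attached along the vanishing cycle $c_i\subset F\times\{p_i\}\subset F\times S^1=\partial(F\times D^2)$ with framing $-1$ relative to the fiber framing. Since $F$ is orientable, $F\times D^2$ is always spin, and by Johnson's theorem the spin structures on $F\times D^2$ are in bijection with spin structures on $F$, which in turn correspond bijectively to quadratic refinements $q$ of the mod-$2$ intersection form on $H_1(F;\Z_2)$. Thus $X$ is spin if and only if some such $q$, extended trivially across the $D^2$ factor, further extends over every $h_i$.

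To pin down the extension condition, I would invoke the standard fact that if $(K,n)$ is a framed knot in the boundary of a spin $4$-manifold $M$, then the spin structure extends over the associated $2$-handle if and only if the framing $n$ identifies $\nu(K)$ with $S^1\times D^2$ so that the induced spin structure on the core becomes the bounding (null-cobordant) spin structure on $S^1$. I would apply this with $K=c_i\times\{p_i\}$ inside $F\times S^1$. Using the fiber framing on $\nu(K)$, namely the normal direction to $c_i$ inside $F$ together with the tangent direction along $S^1$, the spin structure induced on $c_i$ from the product spin structure on $F\times S^1$ is precisely the one whose $\Z_2$-invariant equals $q(c_i)$: this is exactly the geometric content of Johnson's quadratic refinement, which computes the $\Z_2$ self-linking of a simple closed curve against its surface framing.

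Factoring in the Lefschetz framing convention, since the handle $h_i$ is attached with one negative twist relative to the fiber framing, the trivialization of $\nu(c_i)$ used for the handle differs from the fiber framing by an odd amount, so the induced spin structure on $c_i$ relative to the handle's framing is $q(c_i)+1\pmod{2}$. This matches the bounding spin structure on $S^1$, and hence the handle extends the spin structure, exactly when $q(c_i)+1\equiv 0\pmod{2}$, i.e.\ $q(c_i)=1$. Running over all vanishing cycles shows that $X$ is spin precisely when some quadratic form $q$ on $F$ evaluates to $1$ on each $c_i$.

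The main obstacle I anticipate is the framing/spin-structure bookkeeping in the previous paragraph: verifying carefully that (a) the fiber framing genuinely reads off $q(c_i)$ as the induced spin structure on the core, as opposed to its complement, and (b) the convention that Lefschetz $2$-handles are attached with framing $-1$ relative to the fiber framing is what produces the crucial $+1$ shift, moving the condition from $q(c_i)=0$ to $q(c_i)=1$. Once these two convention-dependent parity computations are nailed down, the rest of the argument is a direct assembly of Johnson's classification with the standard spin-obstruction lemma for $2$-handle attachments.
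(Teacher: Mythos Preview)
The paper does not actually supply a proof of this theorem: it is stated as background, with attribution to Stipsicz and to Baykur--Hamada, and is then used as a black box in later sections. So there is no ``paper's own proof'' to compare against.

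That said, your proposal is exactly the standard argument from Stipsicz's original paper. The handle decomposition $X=(F\times D^2)\cup\bigcup_i h_i$, Johnson's bijection between spin structures on $F$ and quadratic refinements $q$, and the extension criterion for a spin structure over a $2$-handle are precisely the three ingredients. Your identification of the delicate point is also accurate: the entire content of the theorem lives in the parity computation that (a) the fiber framing on $c_i$ reads off $q(c_i)$, and (b) the $(-1)$-framing of a Lefschetz $2$-handle shifts this by one. Once you fix a convention for which value of $q$ corresponds to the bounding spin structure on $S^1$ and check that a single framing change flips it, the rest is formal. There is no missing idea here; what remains is exactly the careful bookkeeping you already flagged.
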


This is generalized in \cite{StipsiczSpinLF} and \cite{baykurhamadaspin} to Lefschetz fibrations over $S^2$. 

\begin{theorem}\label{thm:stip2}
	
	Let $f\colon X\to S^2$ be a Lefschetz fibration with regular fiber $F$, where $F$ has a geometric dual of even self intersection. Then if $X$ admits a spin structure if and only if $X-\nu(F)$ does.
\end{theorem}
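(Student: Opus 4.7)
The forward implication is immediate: any spin structure on $X$ restricts to one on the open submanifold $X-\nu(F)$. For the backward implication, my plan is to compute $w_2(X)\in H^2(X;\Z_2)$ directly via Mayer--Vietoris on $X=U\cup V$, with $U=X-\nu(F)$ and $V\simeq \nu(F)$, so that $U\cap V\simeq F\times S^1$ (using that $F\cdot F=0$ makes the normal bundle of $F$ trivial). By naturality of Stiefel--Whitney classes, $w_2(X)|_U=w_2(X-\nu(F))=0$ by hypothesis, and $w_2(X)|_V=w_2(F\times D^2)=0$, since the closed oriented surface $F$ has $w_2(F)=\chi(F)\bmod 2 = 0$. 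Thus $w_2(X)$ lies in the image of the Mayer--Vietoris coboundary
\[
\delta \colon H^1(F\times S^1;\Z_2)\longrightarrow H^2(X;\Z_2).
\]

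Decomposing $H^1(F\times S^1;\Z_2)=H^1(F;\Z_2)\oplus \Z_2\cdot \xi$ with $\xi$ dual to the $S^1$-fiber, the first summand extends over $V=F\times D^2$ and therefore lies in $\ker(\delta)$. Hence $w_2(X)=\epsilon\cdot \delta(\xi)$ for some $\epsilon\in \Z_2$. To pin down $\epsilon$, I pair with the geometric dual class $[S]$. Since $S\cap V$ is a meridional disk whose boundary is a copy of the $S^1$-fiber $\gamma\subset F\times S^1$, the standard description of $\delta$ yields $\langle \delta(\xi),[S]\rangle=\xi(\gamma)=1$. On the other hand, Wu's formula on the closed oriented 4-manifold $X$ gives $\langle w_2(X),[S]\rangle=[S]\cdot[S]\equiv 0\pmod 2$ by the even self-intersection hypothesis on $S$. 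Comparing the two computations forces $\epsilon=0$, so $w_2(X)=0$ and $X$ is spin.

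The main point that needs care is the coboundary evaluation $\langle \delta(\xi),[S]\rangle=1$; concretely, one checks that $\delta(\xi)$ is the mod-$2$ Poincar\'e dual of $[F]$, by taking a cochain extension of $\xi$ across $V$ whose coboundary is supported on and represents the zero section $F\hookrightarrow F\times D^2$. With this identification of $\delta(\xi)$ in hand, the rest of the argument reduces to a single application of Wu's formula, and the constraint $S\cdot S\equiv 0\pmod 2$ does exactly the work of ruling out the nontrivial class in the image of $\delta$.
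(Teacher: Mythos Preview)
Your argument is correct. The paper itself does not prove this theorem; it is quoted as a known result from \cite{StipsiczSpinLF} and \cite{baykurhamadaspin}, with only a one-sentence remark afterward indicating how it is applied (namely, that a section of even square supplies the required geometric dual). So there is no ``paper's own proof'' to compare against, and your self-contained argument fills that gap cleanly.

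A brief comment on approach: the cited sources work through the Lefschetz-fibration structure more explicitly, tracking spin structures via quadratic forms on the fiber and the effect of closing up over the last disk in the base. Your route is different and in some sense more direct for the statement as formulated: you bypass the monodromy entirely, reducing everything to the Mayer--Vietoris decomposition $X=(X-\nu F)\cup(F\times D^2)$ and a single evaluation of $w_2(X)$ against the dual class via Wu's formula. The key step---identifying $\delta(\xi)$ with the mod-$2$ Poincar\'e dual of $[F]$, or equivalently verifying $\langle\delta(\xi),[S]\rangle=\langle\xi,\partial_{\mathrm{MV}}[S]\rangle=\xi([\gamma])=1$ by duality of the Mayer--Vietoris boundary maps---is handled correctly in your final paragraph. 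Your approach has the advantage of making transparent exactly where the even-square hypothesis on the dual enters (only in the last line, via Wu), and it works verbatim for any geometric dual, not just a section.
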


Given the Lefschetz fibration $f\colon X\to S^2$, you obtain a Lefschetz fibration over $D^2$ when you remove $\nu(F)$, and can use Theorem \ref{thm:stip1} to check if $X-\nu(F)$ is spin. Then if $f$ has a section of even self-intersection, this section would be dual to $F$, and so confirms that $X$ is also spin. This criteria for determining when Lefschetz fibrations are spin will be used in Section \ref{sec:type3}.

The elliptic fibrations $E(n)$ are examples of Lefschetz fibrations that are ubiquitous in our constructions, so we remind the reader of their properties. For each $n\geq 1$, $E(n)$ is the total space of a genus one Lefschetz fibration with $12n$ singularities. It is simply--connected with Euler characteristic $12n$ and signature $-8n$. Its intersection form is isomorphic to $n(-E_8)\oplus (2n-1) H$ when $n$ is even and $(2n-1)\lan 1\ran \oplus (10n-1) \lan -1\ran $ when $n$ is odd. This demonstrates that $E(n)$ is an even manifold (and hence spin) when $n$ is even, and is odd when $n$ is odd. Interestingly, the quadratic form evaluating to one on both symplectic generators of $H_1(T^2;\Z_2)$ defines a spin structure $E(n)-\nu(F)$ for all $n$, where $F$ is a regular torus fiber. So although the total space $E(n)$ is only spin half the time, $E(n)-\nu(F)$ is always spin. While $E(1)$ decomposes as $\CPsum{}{9}$, $E(n)$ is irreducible for all other $n>1$.

Given a Lefschetz fibration $f\colon X\to \Sigma$ with homologically essential fibers, there's a natural way to equip $X$ with a symplectic form such that fibers and sections are symplectic. All of our building blocks in later sections will be symplectic manifolds, and many of them will come equipped with a specific Lefschetz fibration. We say that a symplectic manifold is \textit{minimal} if it does not contain an embedded sphere with self-intersection $(-1)$. The following theorem conveys the utility of symplectic minimal manifolds for our purposes.

\begin{theorem}[Hamilton and Kotschick] \cite{Hamilton_2006}\label{thm:HK_minimal}
	Minimal symplectic\/ $4$-manifolds with residually finite fundamental groups are irreducible.
\end{theorem}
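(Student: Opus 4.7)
The plan is to argue by contradiction using Seiberg--Witten theory combined with a lifting argument that uses residual finiteness. Suppose $X$ is minimal symplectic with residually finite $\pi_1$ but admits a smooth decomposition $X \cong A \# B$ in which neither $A$ nor $B$ is homeomorphic to $S^4$. By van Kampen, $\pi_1(X) = \pi_1(A) \ast \pi_1(B)$. The symplectic form on $X$ pulls back to any finite cover, so every finite cover $\wt{X}\to X$ is symplectic, and one checks that symplectic minimality is preserved under finite covers (a $(-1)$-sphere in $\wt{X}$ would, via Taubes' identification of Seiberg--Witten basic classes with pseudoholomorphic curves, yield a symplectic $(-1)$-sphere class descending to $X$). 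In particular every such $\wt{X}$ has $b_2^+\geq 1$, and when $b_2^+>1$ Taubes' theorem guarantees that the canonical $\Spin^c$--structure has nonzero SW invariant.

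The next step is to understand how the connected sum decomposition interacts with finite covers. Regarding $X$ as two punctured pieces $A_0, B_0$ glued along an $S^3$, Bass--Serre theory for the free product $\pi_1(A)\ast \pi_1(B)$ shows that any finite cover $\wt{X}$ corresponding to a finite index subgroup $H$ splits smoothly as a connected sum of finitely many finite covers of $A$ and $B$, together with some number of $S^1\times S^2$ factors. Residual finiteness lets us make $[\pi_1(X):H]$ arbitrarily large, so the number of summands — and in particular the number of lifts of either $A$ or $B$ — can be made as large as we like. We may further pass to a cover where $b_2^+(\wt{X})>1$ and apply Taubes' nonvanishing.

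Now split into two cases. \emph{Case 1:} Both $A$ and $B$ have $b_2^+>0$. Then $\wt{X}$ decomposes as a nontrivial smooth connected sum in which each side still has $b_2^+>0$, so its Seiberg--Witten invariants all vanish — contradicting Taubes applied to the symplectic $\wt{X}$. \emph{Case 2:} One factor, say $B$, is negative semi-definite ($b_2^+(B)=0$). Using residual finiteness of $\pi_1(B)$, choose the cover so that some connected summand of $\wt{X}$ is a simply-connected, closed, negative definite $4$--manifold of positive second Betti number. Donaldson's theorem A forces its intersection form to be $-n\langle 1 \rangle$, whose standard generators are represented by smoothly embedded $(-1)$-spheres. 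These spheres embed in $\wt{X}$, contradicting the minimality of $\wt{X}$ established above.

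The main obstacle is \emph{Case 2}: isolating a simply-connected negative definite summand inside a suitable finite cover, which is exactly where the hypothesis of residual finiteness enters in an essential way. One must ensure that a cover of $B$ can be taken to be both simply connected and have standard Donaldson form, and that the resulting $(-1)$-spheres persist as smoothly embedded spheres in the global connected sum. The secondary technical point is verifying that symplectic minimality passes to finite covers, which relies on SW theory (via Taubes and the work of Hamilton--Kotschick) rather than purely smooth arguments.
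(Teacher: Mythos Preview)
The paper does not prove this theorem; it is quoted verbatim from \cite{Hamilton_2006} and used as a black box, so there is no in-paper argument to compare against. Your outline does follow the genuine Hamilton--Kotschick strategy: establish that symplectic minimality passes to finite covers, decompose finite covers of $A\#B$ via Kurosh/Bass--Serre into connected sums, and feed this into Seiberg--Witten vanishing.

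That said, your Case~2 contains a real gap. You claim that ``using residual finiteness of $\pi_1(B)$'' one can pass to a cover in which some summand is a \emph{simply connected} negative definite manifold. Residual finiteness does not say this: it guarantees that every nontrivial element survives in some finite quotient, not that some finite-index subgroup is trivial. If $\pi_1(B)$ is infinite --- for instance $B=(S^1\times S^3)\#\overline{\CP^2}$, or $B$ an integral homology sphere with infinite $\pi_1$ --- no finite cover of $B$ is simply connected, and your appeal to Donaldson plus Freedman to produce embedded $(-1)$-spheres collapses. The standard repair is to observe that $b_2^+(X)\ge 1$ forces, say, $b_2^+(A)>0$; if $\pi_1(B)\ne 1$, compose the projection $\pi_1(A)\ast\pi_1(B)\to\pi_1(B)$ with a nontrivial finite quotient of $\pi_1(B)$ to obtain a normal finite-index $H$ containing $\pi_1(A)$, so the associated cover $\wt X$ contains at least two copies of $A$ itself as connect-summands and Case~1 applies. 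Only when $\pi_1(B)=1$ does one invoke Donaldson and Freedman directly on $B$.
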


From this we immediately deduce the following.

\begin{lemma}\label{lem:symplectic_irreducible}
	If $X$ is symplectic and spin with a finite fundamental group, then $X$ is irreducible. Moreover, if $X$ has finite $\pi_1$ and has a symplectic minimal universal cover, then $X$ is irreducible.
\end{lemma}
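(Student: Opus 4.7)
Both assertions will be derived from Theorem \ref{thm:HK_minimal}; since finite groups are trivially residually finite, the only thing to check in each case is that the relevant symplectic manifold is minimal.

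For the first statement, the key point is that spinness forces $X$ itself to be minimal. Wu's formula gives $\alpha\cdot\alpha \equiv \lan w_2(X),\alpha\ran \pmod 2$ for every $\alpha\in H_2(X;\Z)$, and this vanishes because $w_2(X)=0$. So every embedded oriented closed surface in $X$ has even self-intersection, and in particular $X$ contains no $(-1)$--sphere. Thus $X$ is a minimal symplectic manifold with finite (hence residually finite) $\pi_1$, and Theorem \ref{thm:HK_minimal} applies to $X$ directly.

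For the second statement, Theorem \ref{thm:HK_minimal} applied to the simply-connected symplectic minimal manifold $\widetilde X$ already gives that $\widetilde X$ is irreducible; I then plan to transfer this downstairs by a standard covering-space/free-product argument. Suppose $X \cong A \# B$. Since $\pi_1(X) \cong \pi_1(A) * \pi_1(B)$ is finite and a non-trivial free product is infinite, one factor, say $\pi_1(A)$, must be trivial, so $\pi_1(B) = \pi_1(X)$ has some order $n \geq 1$. Lifting the connected-sum neck to its $n$ preimages in $\widetilde B$ identifies $\widetilde X \cong \widetilde B \,\#\, n A$.

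Rewriting $\widetilde X \cong A \#\bigl((n-1)A\,\#\,\widetilde B\bigr)$ and invoking irreducibility of $\widetilde X$, one of the two summands must be homeomorphic to $S^4$. Either case forces $A\cong S^4$: directly if $A$ is that summand, and otherwise via Freedman, since any connect-sum decomposition of $S^4$ yields summands that are simply-connected with trivial intersection form and hence themselves homeomorphic to $S^4$. This contradicts any non-trivial decomposition of $X$, proving irreducibility. The step I would want to verify most carefully, and which I expect to be the main obstacle worth checking in detail, is the identification $\widetilde{A\# B}\cong \widetilde B\,\#\, nA$ when $A$ is simply-connected and $B$ has finite $\pi_1$; this is a standard covering-space calculation but one that I would write out explicitly to be safe.
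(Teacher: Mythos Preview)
Your proposal is correct and matches the paper's intent: the paper gives no proof at all, only the sentence ``From this we immediately deduce the following,'' so you have simply supplied the details the authors leave implicit. Both parts proceed exactly as one would expect---spin implies minimality via Wu, then apply Theorem~\ref{thm:HK_minimal}; for the second part, apply Theorem~\ref{thm:HK_minimal} upstairs and pull irreducibility down via the standard cover-of-a-connected-sum argument. One tiny cosmetic point: in your second argument, when $n=1$ the summand $(n-1)A\,\#\,\widetilde B$ is just $\widetilde B=B$, so you conclude $B\cong S^4$ rather than $A\cong S^4$; this still gives what you need, but your sentence ``either case forces $A\cong S^4$'' should be adjusted to ``either case forces one of $A$, $B$ to be homeomorphic to $S^4$.''
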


During our constructions, there are two operations we will perform on symplectic manifolds to build new ones. The first of these is a \emph{symplectic fiber sum.} Let $(M_1,\omega_1)$ and $(M_2,\omega_2)$ be symplectic $4$--manifolds where each $M_i$ admits an embedded genus $g$ symplectic surface $\Sigma_i$ such that $[\Sigma_i]^2 =0$. We can obtain a new symplectic manifold by gluing $M_1 \setminus \nu(\Sigma_1)$ and $M_2\setminus \nu(\Sigma_2)$ along the boundaries $\partial \nu(\Sigma_i)$ via an orientation--reversing bundle diffeomorphism. This new manifold is a {\it symplectic fiber sum} of $M_1$ and $M_2$, and its symplectic form restricts to $\omega_i$ on each $M_i\setminus \nu(\Sigma_i)$. We denote it by $M_1\#_{\Sigma_1=\Sigma_2} M_2$, or sometimes $M_1\#_\Sigma M_2$ when the gluing surfaces are clear from the context. This notation suppresses the choice of gluing map, which will be specified separately when needed. The symplectic fiber sum of two manifolds along a genus $g$ surface has algebraic invariants given by $e(M_1\#_{\Sigma_g} M_2) = e(M_1)+e(M_2) +2-2g$ and $\sigma(M_1\#_{\Sigma_g} M_2) = \sigma(M_1)+\sigma(M_2)$. We will use this operation to construct a new symplectic manifold with different Euler characteristic and signature than the manifolds it's built from.

While symplectic fiber sums are effective for obtaining irreducible $4$--manifolds with various Euler characteristics and signatures, we introduce another operation which preserves these invariants but changes $\pi_1$. Let $X$ be a smooth $4$--manifold and $T$ an embedded torus with framing $\eta\colon \nu T\to T^2\times D^2$. Let $\lambda$ be the image of a push-off of a primitive curve on $T$ under $\eta$, and let $\mu_T$ be a meridian of $T$, i.e. a fiber of $\partial \nu T$. Then let $\phi\colon T^2\times S^1 \to \partial \nu T$ be a diffeomorphism which induces $[\{pt\}\times S^1]\mapsto p[\mu_T]+q[\lambda]$ on the first homology. The manifold $X'$ given by \[
X' = (X\setminus \nu T) \cup_\phi T^2\times D^2,
\] 
is the result of {\it $p/q$-torus surgery} performed on $X$ along $T$ with respect to $\lambda$. From Seifert-Van Kampen, we see that $\pi_1(X')$ is obtained from $\pi_1(X-T)$ by quotienting out the subgroup normally generated by $\mu_T^p\lambda_\eta^q$. The 4-manifold obtained from this construction admits symplectic structure whenever $X$ is symplectic, $T$ is Lagrangian, and $p=1$. In this case, the surgery is called {\it Luttinger surgery with coefficient $1/q$.} 

\subsection{$\mathbf{\Z_2}$--construction}

The following construction is introduced in \cite{baykur2024smooth}, which we review here. Let $Z$ be a closed, oriented, smooth 4-manifold and let $\Sigma$ be an embedded closed genus $g$ surface in $Z$ with a trivial normal bundle. Let $\eta \colon \nu\Sigma\to D^2\times \Sigma_g$ be a framing of the normal bundle for $\Sigma$. Then let $\phi$ be an orientation-reversing, free, involutive bundle map of $S^1\times \Sigma_g$. The map \[\Phi:= \eta|_{\partial(\nu\Sigma)}^{-1}\circ \phi \circ \eta|_{\partial(\nu\Sigma)}\] is an orientation-reversing free involution on $\nu\Sigma$. The manifold \[\widetilde{X}:=(Z\setminus \nu\Sigma)\cup_{\Phi} (Z\setminus \nu\Sigma)\] is the {\it double} of $Z$ along $\Sigma$. Let $\tau$ be the orientation-preserving free involution on $\widetilde{X}$ that swaps the two copies of $Z\setminus \nu\Sigma$ and restricts to $\Phi$ on $\partial(\nu\Sigma)$. Then $X:=\widetilde{X}/\tau$ is a smooth, oriented, closed 4--manifold which we call the $\mathbbmsl{Z}_2$\emph{--construction of $Z$ along $\Sigma$.} We will employ some variation of this technique in all of our constructions. In all cases, the double $\widetilde{X}$ will admit a spin structure. When constructing manifolds with $w_2$-type $(ii)$, the $\Z_2$--construction $X$ will also be spin, and this will not be the case for $w_2$-type $(iii)$. The following lemma shows how algebraic invariants change when the $\Z_2$--construction is performed. The proof is a matter of simple algebraic computation and is omitted.

\begin{lemma}\label{lemma:inv_computations}
	
	Let $M$ be a $4$--manifold with $b_1(M)=0$ and let $\Sigma$ an embedded genus $g$ surface. Then taking a double/quotient/both along $\Sigma\subset M$ transforms the invariants of $M$ as follows, respectively:
	\begin{align*}
		\textit{Double} && \textit{Quotient} && \textit{$\Z_2$--construction}\\  
		\sigma \mapsto 2\sigma && \sigma \mapsto \sigma/2 && \sigma \mapsto \sigma \\
		e \mapsto 2e + 4g- 4 && e \mapsto e/2 && e \mapsto e+2g-2\\ 
	\end{align*}
\end{lemma}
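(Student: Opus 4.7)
The plan is to verify the three columns one at a time: first the double, then the free quotient, and finally the $\Z_2$--construction as the composition of the two. All three rest on standard Euler characteristic bookkeeping together with Novikov additivity for the signature.

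For the double $\widetilde{M}$, I will first compute $e(M\setminus\nu\Sigma)$. Since $\nu\Sigma$ deformation retracts onto $\Sigma$ it has Euler characteristic $2-2g$, and since $\partial\nu\Sigma\cong S^1\times\Sigma_g$ has Euler characteristic zero, inclusion--exclusion gives $e(M\setminus\nu\Sigma)=e-(2-2g)=e+2g-2$. Gluing two such copies along their common boundary (again of Euler characteristic zero) then yields $e(\widetilde{M})=2(e+2g-2)=2e+4g-4$. For the signature I will apply Novikov additivity twice: the decomposition $M=(M\setminus\nu\Sigma)\cup\nu\Sigma$ together with $\sigma(\nu\Sigma)=0$ (its $H_2$ is generated by $[\Sigma_g]$, which has zero self-intersection, so the intersection form vanishes) gives $\sigma(M\setminus\nu\Sigma)=\sigma$, and a second application to the double gives $\sigma(\widetilde{M})=2\sigma$.

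For the quotient by a free orientation--preserving involution $\tau$, the projection $M\to M/\tau$ is an unramified degree--$2$ cover. Euler characteristic is multiplicative under finite covers by a cell--counting argument, and signature is multiplicative under finite unramified covers by the Hirzebruch signature theorem applied to the cover (or equivalently via the standard transfer map on rational cohomology). Hence $e(M/\tau)=e/2$ and $\sigma(M/\tau)=\sigma/2$. The $\Z_2$--construction is by definition the double followed by this free quotient, so chaining the two rules produces $\sigma\mapsto 2\sigma\mapsto\sigma$ and $e\mapsto 2e+4g-4\mapsto e+2g-2$, which is exactly the third column.

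The only real subtlety will be bookkeeping orientations in the double: both copies of $M\setminus\nu\Sigma$ inherit the orientation of $M$, and the boundary gluing $\Phi$ is orientation--reversing on $\partial(\nu\Sigma)$, which is precisely the setup under which Novikov additivity contributes the two signatures with the same sign rather than canceling. Beyond that, each step is routine algebra, which is presumably why the lemma is stated without proof.
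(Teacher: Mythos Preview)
Your argument is correct and is precisely the routine computation the paper has in mind; indeed the paper omits the proof entirely, noting only that it ``is a matter of simple algebraic computation.'' Your Euler-characteristic bookkeeping via inclusion--exclusion and your two applications of Novikov additivity (using that $\Sigma$ has trivial normal bundle, hence $\sigma(\nu\Sigma)=0$) are exactly what is needed, and the hypothesis $b_1(M)=0$ plays no role in these particular formulas.
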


\subsection{What are we looking for, anyways?}\label{sec:history}

This paper aims to find irreducible 4-manifolds with order two $\pi_1$ and $w_2$--types $(ii)$ and $(iii)$. Before beginning our constructions, we digress to motivate the constraining inequalities on the geography plane for these  $w_2$--types.

Let $X$ be a closed, smooth, oriented $4$--manifold with order two $\pi_1$ and $w_2$ type $(ii)$ or $(iii)$. In either case, the intersection form of $X$ is even. It suffices to restrict our attention to the case where $\sigma(X)\leq 0$, since one can always reverse orientation to cover the $\sigma>0$ geography. The intersection form of $X$ is given by $a(-E_8)\oplus b H$ for some $a,b\geq 0$  \cite[Chapter 1]{GompfStip}. Let $\widetilde X$ be the spin, simply--connected double cover of $X$. It's not hard to compute that the intersection form of $\widetilde X$ is then $2a(-E_8)\oplus (2b+1)H$. Set $x=2a$ and $y=2b+1$, so that $Q_{\widetilde X} = x(-E_8)\oplus y H$. From Furuta's theorem, we know that $b_2(\widetilde X)\geq \frac{5}{4}\left|\sigma(\widetilde X)\right|+2$. This gives the inequality $y\geq x+1$, which is equivalent to $b\geq a$. The famous $11/8$ conjecture gives a stricter bound: $b_2(\widetilde X)\geq \frac{11}{8}\left|\sigma(\widetilde X)\right|$. This is equivalent to $y\geq \frac{3}{2} x$ or $b\geq \frac{3a-1}{2}$. In order to argue irreducibility, we will construct manifolds with minimal symplectic universal covers. Taubes' theorem shows that all minimal, symplectic, non-ruled $4$--manifolds have a non-negative first Chern number, $c_1^2$ \cite{Taubes}. Equivalently, $2e(\widetilde X)+3\sigma(\widetilde X)\geq 0$. This adds the constraint $y\geq 2x-1$ to our geography, which is the same as $b\geq 2a-1$.

These inequalities constrain the intersection forms that we hope to find here. Namely, we will content ourselves with constructing manifolds having intersection forms isomorphic to $a(-E_8)\oplus bH$, where $a,b\geq 0$ and $b\geq 2a-1$. Figure \ref{fig:118_and_furuta} shows how Furuta's theorem, the $11/8$--conjecture, and the $c_1^2\geq 0$ assumption restrict the $(x,y)$ coordinates of $Q_{\widetilde X} \cong x (-E_8)\oplus yH$ and the $(a,b)$ coordinates of $Q_X\cong a(-E_8) \oplus bH$, respectively. Note that if $X$ is also spin, we may apply the constraints coming from Furuta's theorem, the $11/8$ conjecture, and Taubes theorem directly to $X$ rather than $\tilde X$. So in this case, the conditions on $Q_X$ would be the same as those for $Q_{\tilde X}$, shown on the left in Figure \ref{fig:118_and_furuta}. However, regardless of whether $X$ has $w_2$-type $(ii)$ or $(iii)$, the strictest constraint, the one coming from Taubes' theorem, is the same: $b\geq 2a-1$. Hence in both Sections \ref{sec:type2} and \ref{sec:type3}, we populate the region spanned by $(a,b)$ pairs satisfying this inequality.

\begin{figure}[h]
	\centering
	\resizebox{6.5in}{!}{
		\begin{tikzpicture}

			\begin{scope}[xshift = -5cm]
				
				\node[black, scale = 1.2] at (3,6.2) 
				{Intersection form of $\widetilde X$};
				
				\begin{axis}[
					axis lines = middle,
					xmin=0, xmax=15,
					ymin=0, ymax=15,
					xtick = {0,1,...,15},
					ytick = {0,1,...,15}, 
					grid=both,
					ticklabel style = {font=\tiny},
					every axis plot/.append style={thick}]
					
					\draw [red!90!black, very thick] (axis cs:0,1) -- (axis cs:14,15); 
					
					\draw [blue!90!black, very thick] (axis cs:0,0) -- (axis cs:10,15); 
					
					\draw[green!60!black, very thick] (axis cs:0,-1) -- (axis cs: 8,15);
					
					\draw(axis cs:14.5,0) node[anchor = south,scale=.9] {$x$};
					\draw(axis cs:0,14.5) node[anchor=west,scale=.9] {$y$};
					
				\end{axis}
				
			\end{scope}

			\begin{scope}[xshift = 5cm]
				
				\node[black, scale = 1.2] at (3,6.2) 
				{Intersection form of $X$};
				
				\begin{axis}[
					axis lines = middle,
					xmin=0, xmax=15,
					ymin=0, ymax=15,
					xtick = {0,1,...,15},
					ytick = {0,1,...,15}, 
					grid=both,
					ticklabel style = {font=\tiny},
					every axis plot/.append style={thick}]
					
					\draw [red!90!black, very thick] (axis cs:0,0) -- (axis cs:15,15); 
					
					\draw [blue!90!black, very thick] (axis cs:0,-1/2) -- (axis cs:31/3,15); 
					
					\draw[green!60!black, very thick] (axis cs:0,-1) -- (axis cs: 8,15);				
					
					\draw(axis cs:14.5,0) node[anchor = south,scale=.9] {$a$};
					\draw(axis cs:0,14.5) node[anchor=west,scale=.9] {$b$};

				\end{axis}
				
			\end{scope}
			
			\draw(3.25,1) node {\huge $\longrightarrow$};
			
			\draw(3.25,4.5) node[red!90!black] {Furuta};
			\draw[-{Stealth}, red!90!black] (2.5,4.5)--(.2,4.5);
			\draw[-{Stealth}, red!90!black] (4,4.5)--(10.2,4.5);
			
			\draw(3.25,3.5) node[blue] {$11/8$};
			\draw[-{Stealth}, blue] (2.75,3.5)--(-2,3.5);
			\draw[-{Stealth}, blue] (3.75,3.5)--(7.75,3.5);
			
			\draw(3.25,2.5) node[green!60!black] {$c_1^2\geq 0$};
			\draw[-{Stealth}, green!60!black] (2.5,2.5)--(-3.2,2.5);
			\draw[-{Stealth}, green!60!black] (4,2.5)--(6.5,2.5);
		\end{tikzpicture}
	}
	\caption{The lines on the left restricting $Q_{\tilde X}$ are $y\geq x+1$, $y\geq \frac{3}{2}x$, and $y\geq 2x-1$. The lines on the right restricting $Q_{X}$ are $b\geq a$, $b\geq \frac{3a-1}{2}$, and $b\geq 2a-1$. Recall that $x=2a$ and $y=2b+1$.}
	\label{fig:118_and_furuta}
\end{figure}
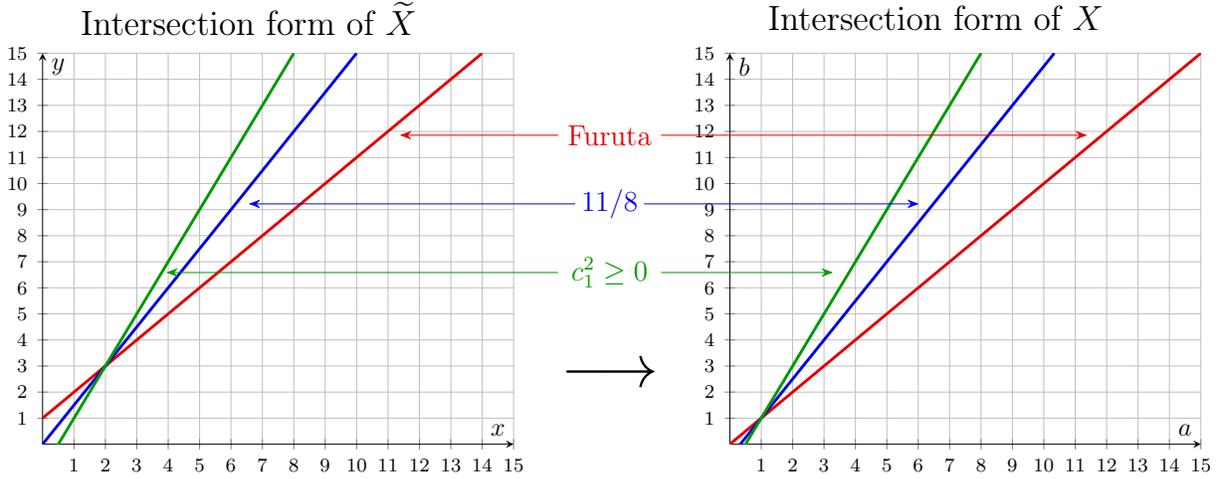

\section{Spinness of $\Z_2$--construction}\label{sec:spin}

The goal of this section is to prove the following.

\begin{theorem} \label{thm:Z2_spin}
	Let $X$ be a smooth, spin $4$--manifold and $\Sigma$ an embedded orientable surface with a trivial normal bundle, such that $X-\nu \Sigma$ is simply--connected. Then there is an orientation-reversing involutive diffeomorphism $\phi:S^1\times \Sigma \to S^1\times \Sigma$ such that the $\Z_2$--construction of $X$ along $\Sigma$ with gluing map $\phi$ is spin (i.e., has $w_2$--type $(ii)$).
\end{theorem}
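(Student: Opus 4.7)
My plan is to construct a specific gluing map $\phi$ so that the double $\widetilde X$ inherits a spin structure from $X$ in a form manifestly invariant under the deck involution $\tau_X$ that swaps the two halves, whence the quotient $X = \widetilde X/\tau_X$ inherits a spin structure as well.

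Since $Y := X\setminus\nu\Sigma$ is simply-connected and $X$ is spin, $Y$ admits a unique spin structure $\mathfrak{s}_Y$, and this restricts to a spin structure $\mathfrak{s}_\partial$ on $\partial Y \cong S^1\times\Sigma$. Because $\mathfrak{s}_\partial$ also extends across the tubular neighborhood $\nu\Sigma \cong D^2\times\Sigma$, its $S^1$-factor must be the bounding spin structure, and its $\Sigma$-factor is some spin structure $\mathfrak{t}$ on $\Sigma$ determined by the embedding $\Sigma\subset X$.

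The gluing map I would take is the product
\[
\phi(t,x) = (t+\pi,\ \tau(x)),
\]
where $\tau:\Sigma\to\Sigma$ is an orientation-reversing involution to be chosen. Such a $\phi$ is involutive, orientation-reversing, and free (the $S^1$-shift already has no fixed points, independent of whether $\tau$ does). Setting $\Phi := \eta^{-1}\circ\phi\circ\eta$ and forming $\widetilde X = Y\cup_\Phi Y$, the double is spin provided $\phi^*\mathfrak{s}_\partial = \mathfrak{s}_\partial$. The $S^1$-shift is isotopic to the identity and so preserves the bounding spin structure on $S^1$, reducing the problem to producing $\tau$ with $\tau^*\mathfrak{t} = \mathfrak{t}$. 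This is the principal technical step I would focus on: I would realize $\Sigma$ as the boundary of a standard symmetric handlebody in $\R^3$ with a reflectional symmetry $\tau$, then view $\mathfrak{t}$ as a quadratic refinement of the intersection form on $H_1(\Sigma;\Z_2)$ and check $\tau$-invariance of $\mathfrak{t}$ on a symplectic basis. Any discrepancy between the preferred $\tau$-invariant spin structure and the given $\mathfrak{t}$ can be absorbed by pre-composing $\Phi$ with an $\mathfrak{s}_\partial$-changing self-diffeomorphism of $\partial Y$ that extends across $Y$, using the transitivity of the orientation-preserving mapping class group of $\Sigma$ on spin structures of fixed Arf invariant.

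Finally, because $\widetilde{\mathfrak{s}}$ is built by gluing two copies of the same $\mathfrak{s}_Y$ across the spin-structure-preserving $\Phi$, the swap deck transformation $\tau_X$ canonically lifts to an involution of the principal $\Spin(4)$-bundle of $\widetilde X$: on each copy of $Y$ the lift is the natural self-identification of the unique spin bundle, and the compatibility on the gluing locus is precisely the condition $\phi^*\mathfrak{s}_\partial = \mathfrak{s}_\partial$ already arranged. One must also confirm that this canonical lift has order two rather than squaring to the nontrivial deck element of the spin cover, which is a local check on $\partial Y$ using the explicit form of $\phi$. Once confirmed, $\widetilde{\mathfrak{s}}$ descends to a spin structure on $X$, giving $X$ the desired $w_2$-type $(ii)$.
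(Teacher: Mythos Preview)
Your strategy---show that the double is spin and then argue that the deck involution $\tau_X$ lifts to an order-two automorphism of the spin bundle, so the spin structure descends---is genuinely different from the paper's. The paper never attempts a descent argument; instead it proves an extended Wu formula over $\Z_2$ (Proposition~\ref{prop:w_2_nonorientable_surface} and Corollary~\ref{cor:Z2_spin}), writes down explicit generators of $H_2(Z;\Z_2)$ via Mayer--Vietoris and the mapping torus of $r$ (Propositions~\ref{prop: H_2(MT) generators} and~\ref{prop:Z_H2_generators}), and checks each generator has even square. The spinness of $X$ (as opposed to merely $X-\nu\Sigma$) enters only in the verification for the class $S_\mu$, via a capping argument in $X$.

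That said, your proposal has a real gap at exactly the step you flag as a ``local check.'' First, a small error: a self-diffeomorphism of $\partial Y$ that extends across the simply-connected $Y$ necessarily preserves the unique spin structure on $Y$ and hence cannot be $\mathfrak{s}_\partial$-changing, so your discrepancy-absorbing maneuver does not work. (This is avoidable: the paper's reflection $r$ acts trivially on $H^1(\Sigma;\Z_2)$, so $r^*\mathfrak{t}=\mathfrak{t}$ for every $\mathfrak{t}$ and no adjustment is needed.) More seriously, the order-two condition on the lift is the entire content of the theorem, and it is not a routine local computation. The map $\phi$ does not factor as a product of spin-bundle maps, since $\tau$ is orientation-reversing on $\Sigma$ and so does not lift to the spin bundle of $\Sigma$ alone; one must treat the full $4$-dimensional differential $(-1)\times \id \times Dr$. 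Moreover, the bounding spin structure on $S^1$ does \emph{not} descend under the antipodal map (both spin structures on the quotient circle pull back to the non-bounding one), so the $S^1$-factor contributes a sign $-1$ that must be cancelled by the normal-and-$\Sigma$ factors---a computation you have not done. This sensitivity is essential: the paper later produces $w_2$-type $(iii)$ examples by the same $\Z_2$-construction when only $X-\nu\Sigma$ is spin, and in your framework the sole difference is whether the $S^1$-factor of $\mathfrak{s}_\partial$ is bounding. Your argument must detect this, and as written it does not.
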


Proving Theorem \ref{thm:Z2_spin} will involve a close examination of how spin structures are defined, constructed, and preserved. The reader is referred to \cite[Chapter 4]{scorpan2005} for more expository information on spin structures.

\subsection{Non-orientable Wu formula}

Recall the well-known formula from Wu, which provides useful criteria relating an even intersection form to the existence of a spin structure.

\begin{theorem}[Wu's Formula]
	For all oriented surfaces $S$ embedded in a smooth $4$--manifold $M$, $w_2(M)\cdot S = S\cdot S \bmod 2$, where $\cdot$ denotes the algebraic intersection.
\end{theorem}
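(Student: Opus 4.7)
The plan is to reduce Wu's formula to a local computation on $S$ via the tubular neighborhood decomposition, then interpret the two resulting terms geometrically. First, using the paper's standing convention that $M$ is orientable together with the given orientation on $S$, the normal bundle $\nu S \to S$ inherits an orientation, and we have the Whitney sum decomposition $TM|_S \cong TS \oplus \nu S$ of oriented rank-two real bundles. Applying the Whitney sum formula and observing that $w_1$ vanishes on both oriented factors yields the clean identity $w_2(TM)|_S = w_2(TS) + w_2(\nu S)$ in $H^2(S;\Z_2)$, with no cross term to worry about.

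Next I would pair both sides with the $\Z_2$--fundamental class $[S]_2 \in H_2(S;\Z_2)$. By naturality of the Kronecker pairing under the inclusion $S \hookrightarrow M$, the left-hand side evaluates to $w_2(M) \cdot S$. For the right-hand side, $\langle w_2(TS), [S]_2 \rangle$ equals $\chi(S) \bmod 2$, since $w_2$ of an oriented rank-two real bundle is the mod-two reduction of its Euler class and the Euler class of $TS$ paired with $[S]$ returns the Euler characteristic. Similarly, $\langle w_2(\nu S), [S]_2 \rangle$ equals $e(\nu S) \bmod 2$, and the Euler number of the normal bundle of an embedded oriented surface is, by a standard computation, exactly the topological self-intersection $S \cdot S$.

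To finish, I would observe that every closed orientable surface has Euler characteristic $2 - 2g$, which is even, so the $\chi(S)$ term vanishes modulo $2$, leaving $w_2(M) \cdot S \equiv S \cdot S \bmod 2$, as claimed. There is no genuine obstacle here; the only care needed is in the two textbook identifications, namely that $w_2$ of an oriented rank-two bundle is the mod-two reduction of the Euler class, and that the Euler number of $\nu S$ computes the topological self-intersection. Both are standard. As an organizational remark, this proof makes transparent where each hypothesis is doing work: the orientability of $S$ simultaneously kills the $w_1$ cross term and forces $\chi(S)$ to be even, while the orientability of $M$ is what orients $\nu S$ in the first place. This isolates precisely the terms that a non-orientable generalization (as suggested by the section title) will have to track separately, so the proof is not merely an invocation of a black-box identity but the natural launching point for the sequel.
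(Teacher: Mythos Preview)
Your proof is correct. The Whitney-sum reduction to $w_2(TS)+w_2(\nu S)$, the identification of each summand with a mod-$2$ Euler number, and the observation that $\chi(S)$ is even for an orientable surface all go through exactly as you describe.

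However, the paper does not actually prove this statement: it is presented as a well-known result to be recalled, with no argument given. What the paper \emph{does} prove is the extension to arbitrary classes in $H_2(M;\Z_2)$ (Proposition~\ref{prop:w_2_nonorientable_surface}), and there the method is genuinely different from yours. Rather than working locally on the surface via Whitney sum, the paper works globally in the cohomology of $M$: since $M$ is orientable one has $w_1=0$, hence Wu's theorem gives $w_2=v_2$, and then the defining property of the Wu class yields $\langle w_2\smile S^\vee,[M]\rangle=\langle Sq^2(S^\vee),[M]\rangle=\langle S^\vee\smile S^\vee,[M]\rangle=[S]^2$ directly. Your approach is more elementary and makes the role of each hypothesis transparent, as you note, but it does not extend cleanly to non-orientable $S$: the $w_1(TS)\smile w_1(\nu S)$ cross term reappears, and $\chi(S)$ is no longer forced to be even. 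The Wu-class argument bypasses these issues entirely by never descending to the surface, which is why the paper adopts it for the generalization.
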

	
This implies that if $H_2(M)$ is generated by orientable surfaces, which happens whenever $H_1(M;\Z)$ has no 2--torsion, then if $Q_M$ is even, $M$ is spin. On the other hand, if $H_1(M)$ has 2-torsion, then $H_2(M)$ may not be generated by orientable surfaces, so we can't claim spinness by the evenness of the intersection form and Wu's theorem. The following proposition extends Wu's theorem to all classes in $H_2$ taken with $\Z_2$ coefficients.

\begin{proposition}
	If $M$ is an orientable, closed, smooth $4$-manifold, then $w_2(X)(S) = [S]^2$ for all $S\in H_2(M;\Z_2)$.
	
	\label{prop:w_2_nonorientable_surface}
\end{proposition}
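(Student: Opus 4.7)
The plan is to deduce this as a direct application of the abstract Wu formula, specialized to $\Z_2$ cohomology in degree $2$ on a closed orientable $4$-manifold. The classical statement quoted in the paper only handles oriented embedded surfaces, but the cohomological content of Wu's formula applies to all of $H^2(M;\Z_2)$; Poincar\'e duality with $\Z_2$ coefficients then transports this to the statement about arbitrary (possibly non-orientable) classes in $H_2(M;\Z_2)$.

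First I would recall the Wu classes $v_i \in H^i(M;\Z_2)$, characterized by $\langle v_i \cup y, [M]_2\rangle = \langle Sq^i(y), [M]_2\rangle$ for $y \in H^{4-i}(M;\Z_2)$, where $[M]_2$ is the $\Z_2$ fundamental class (which exists even without a $\Z$-orientation, but in our case $M$ is orientable). The Stiefel--Whitney classes are recovered by $w_k = \sum_{i} Sq^i(v_{k-i})$, so in particular $w_2 = v_2 + Sq^1 v_1$. Since $M$ is orientable, $w_1(M) = 0$, and because $v_1 = w_1$ on a manifold, $v_1 = 0$ and hence $w_2 = v_2$.

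Next I would use the unstable axiom for Steenrod squares: $Sq^n(y) = y \cup y$ whenever $y \in H^n(M;\Z_2)$. Taking $n = 2$ and combining with the previous step gives
\[
\langle w_2(M) \cup y, [M]_2\rangle \;=\; \langle Sq^2(y), [M]_2\rangle \;=\; \langle y \cup y, [M]_2\rangle
\]
for every $y \in H^2(M;\Z_2)$. Now invoking $\Z_2$ Poincar\'e duality, any class $S \in H_2(M;\Z_2)$ is the Poincar\'e dual of some $y \in H^2(M;\Z_2)$; under this duality, evaluating $w_2(M)$ on $S$ corresponds to the Kronecker pairing $\langle w_2 \cup y, [M]_2\rangle$, and the cup square $\langle y \cup y, [M]_2\rangle$ is by definition the $\Z_2$--self-intersection $[S]^2$. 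Putting the two equalities together yields $w_2(M)(S) = [S]^2$, as desired.

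There is no serious obstacle here; the only subtlety worth a sentence of care is the interpretation of $[S]^2$ and of $w_2(M)(S)$ when $S$ is a $\Z_2$-class (possibly represented by a non-orientable surface): both are defined via Poincar\'e duality with $\Z_2$ coefficients, and under that identification the claim is literally the degree-$2$ Wu formula $Sq^2(y) = y \cup y$. Thus the proposition holds for any orientable closed smooth $4$-manifold, with no hypothesis on the torsion of $H_1(M;\Z)$, which is precisely the extension of the classical Wu formula the authors need in order to test spinness against $\Z_2$-classes.
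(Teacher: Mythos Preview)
Your proof is correct and follows essentially the same argument as the paper: both use the Wu class characterization, deduce $w_2=v_2$ from orientability (via $w_1=v_1=0$), apply $Sq^2(y)=y\cup y$ on degree-two classes, and then translate via $\Z_2$ Poincar\'e duality. The only difference is that you spell out the intermediate step $v_1=w_1$ and the unstable-axiom name a bit more explicitly than the paper does.
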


The proof of this proposition uses the Wu class $v\in H^*(M;\Z_2)$, which is characterized by the identity $\langle v\smile x, [M] \rangle=\langle Sq(x),[M]\rangle $, where $[M]$ is the fundamental class and $Sq$ is the Steenrod square operator. The proof also relies on Wu's theorem, which relates the Wu class and the Stiefel-Whitney class. For more exposition on these topics, see \cite[Chapter 11]{milnor_stasheff}. 

\begin{proof}[Proof of Proposition \ref{prop:w_2_nonorientable_surface}]
	Let $w=\sum_{i=0}^4 w_i$ and $v=\sum_{i=0}^4 v_i$ denote the Stiefel Whitney and Wu classes of $M$, respectively. From Wu's formula $w_k = \sum_{i+j=k} Sq^i(v_j)$ and the fact that $w_1=0$, we deduce that $w_2=v_2$. Now let $S\in H_2(M;\Z_2)$ and let $S^\vee$ be its dual in $H^2(M;\Z_2)$. The Wu class satisfies\begin{align*}
		\langle v_2\smile S^\vee,[M]\rangle &= \langle Sq^2(S^\vee), [M]\rangle\\
		&= \langle S^\vee \smile S^\vee,[M]\rangle \\
		&= [S]^2.
	\end{align*} 
	Hence $\langle w_2\smile S^\vee,[M]\rangle = [S]^2$, completing the proof.
\end{proof}

\begin{corollary}\label{cor:Z2_spin}
	Let $M$ be a closed, smooth $4$--manifold. If there exists a basis $\{b_1,b_2,\cdots, b_n\}$ for $H_2(M;\Z_2)$ such that $b_i^2= 0\bmod 2$ for all $i$, then $M$ is spin.
\end{corollary}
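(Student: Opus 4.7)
The plan is to show that $w_2(M)\in H^2(M;\Z_2)$ vanishes, which suffices for spinness since $M$ is orientable by the standing convention. The key observation is that with $\Z_2$ coefficients the evaluation pairing
\[
H^2(M;\Z_2)\otimes H_2(M;\Z_2)\longrightarrow \Z_2
\]
is non-degenerate on the left: since $\Z_2$ is a field, universal coefficients gives $H^2(M;\Z_2)\cong \Hom(H_2(M;\Z_2),\Z_2)$ with no Ext term. Thus a class in $H^2(M;\Z_2)$ is zero precisely when it evaluates to zero on every element of $H_2(M;\Z_2)$.

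Next I would use Proposition \ref{prop:w_2_nonorientable_surface} to rewrite this evaluation: for every $S\in H_2(M;\Z_2)$ one has $w_2(M)(S)=[S]^2 \bmod 2$. The crucial point is that, viewed as a map $H_2(M;\Z_2)\to \Z_2$, the assignment $S\mapsto [S]^2$ is in fact $\Z_2$-\emph{linear}: bilinearity of the mod-$2$ intersection pairing gives
\[
(S+T)\cdot(S+T)\;=\;S\cdot S+T\cdot T+2\,S\cdot T\;\equiv\;S\cdot S+T\cdot T\pmod{2},
\]
so the cross term disappears. Consequently, to verify that $w_2(M)$ annihilates every class, it is enough to verify it on a spanning set of $H_2(M;\Z_2)$.

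By hypothesis, $\{b_1,\dots,b_n\}$ is such a basis, and $[b_i]^2\equiv 0\bmod 2$ for each $i$. Combining this with Proposition \ref{prop:w_2_nonorientable_surface} yields $w_2(M)(b_i)=0$ for all $i$, hence $w_2(M)$ vanishes identically on $H_2(M;\Z_2)$, hence $w_2(M)=0$ in $H^2(M;\Z_2)$, hence $M$ is spin.

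There is no substantive obstacle here; the proof is a short consequence of Proposition \ref{prop:w_2_nonorientable_surface}. The only subtlety worth flagging explicitly is the linearity of $S\mapsto [S]^2\bmod 2$ — that is, the fact that $w_2(M)$ agrees with a genuine $\Z_2$-linear functional rather than merely a quadratic one — which is exactly what allows a basis check to suffice.
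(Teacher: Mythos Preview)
Your proof is correct and is exactly the intended argument: the paper states this as an immediate corollary of Proposition~\ref{prop:w_2_nonorientable_surface} without further proof, and your write-up spells out precisely the implicit reasoning (non-degeneracy of the $\Z_2$ Kronecker pairing plus a basis check). The linearity verification you flag is in fact automatic---$w_2(M)(-)$ is already a linear functional since $w_2(M)\in H^2(M;\Z_2)$---so you could streamline by observing this directly rather than computing $(S+T)^2$, but either way the argument is complete.
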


As an application of Corollary \ref{cor:Z2_spin}, we give conditions for torus surgery to preserve spin structures. The proposition below extends the idea from the proof \cite[Theorem 10]{baykurhamadaspin} to the case with torsion in $H_1(M;\Z)$.

\begin{proposition} \label{prop:dual_torus_spin}
	Let $M$ be a spin 4-manifold, and $M_T$ be the outcome of a $1/1$-torus surgery of $M$ along an embedded torus $T$ such that $b_1(M_T) = b_1(M)-1$ (with $\Z$ coefficients). Assume that $T$ has a dual $T_d$ in $M$ such that $\langle T, T_d \rangle$ generate a $2\times 2$ minor $H$ in the intersection form over $H_2(M,\Z)$. Then $M_T$ is spin.
\end{proposition}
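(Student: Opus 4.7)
The plan is to apply Corollary \ref{cor:Z2_spin}: it suffices to produce a basis of $H_2(M_T;\Z_2)$ all of whose elements have even self-intersection.

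First I would use the $H$-summand hypothesis to pick a $\Z$-basis $T,T_d,S_1,\ldots,S_n$ of $H_2(M;\Z)/\text{Tor}$ with each $S_i$ orthogonal to both $T$ and $T_d$. Because torsion classes in $H_2(M;\Z)$ automatically pair trivially with the free classes $T$ and $T_d$ over $\Z$, each $S_i$ and each $\Z$-torsion generator of $H_2(M;\Z)$ can be represented by an embedded surface tubed off $T$ and hence lying in $U:=M-\nu(T) = M_T-\nu(T')$. These surfaces descend to classes in $H_2(M_T;\Z_2)$ with unchanged, and thus even, self-intersection. The new surgery torus $T'\subset V'\cong T^2\times D^2$ has trivial normal bundle, so $[T']^2=0$.

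Next, I would show $H_1(M_T;\Z)_\text{tor}\cong H_1(M;\Z)_\text{tor}$. The long exact sequence of the pair $(M,U)$, combined with the hypothesis $T_d\cdot T=1$, yields $H_1(U;\Z)\cong H_1(M;\Z)$ (the hypothesis forces the relevant connecting map to vanish). Mayer-Vietoris for $M_T=U\cup V'$ then gives $H_1(M_T;\Z)=H_1(M;\Z)/\langle[\lambda]\rangle$, and the hypothesis $b_1(M_T)=b_1(M)-1$ forces $[\lambda]$ to be a primitive $\Z$-class. Quotienting by a primitive free class kills only free rank without producing new torsion, so the claim follows. In particular, by Poincar\'e duality and universal coefficients, the $2$-torsion contributions to $H_2(M_T;\Z_2)$ are in bijection with those to $H_2(M;\Z_2)$.

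The main obstacle will be verifying that each such $2$-torsion generator of $H_2(M_T;\Z_2)$---coming from $H_1(M_T;\Z)[2]$ and $H_2(M_T;\Z)[2]$ via the universal coefficient theorem---can be represented by a surface of even self-intersection. For the corresponding classes in $M$, Proposition \ref{prop:w_2_nonorientable_surface} applied to the spin manifold $M$ yields (possibly non-orientable) surface representatives of even self-intersection. Each such representative can be tubed off $T$ into $U$ (the algebraic intersection with $T$ vanishes mod $2$ because the classes are torsion-derived), and then pushed into $M_T$ with its self-intersection preserved. Together with $\{S_1,\ldots,S_n\}$ and $[T']$, an Euler characteristic count ($b_2$ drops by $2$ while $H_1$-torsion is preserved) confirms we obtain a complete $\Z_2$-basis, completing the verification of Corollary \ref{cor:Z2_spin}.
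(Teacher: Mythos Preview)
Your overall strategy---push surfaces from $M-\nu(T)$ into $M_T$ and apply Corollary~\ref{cor:Z2_spin}---matches the paper's, and your computation that $H_1$-torsion is preserved (hence $\dim_{\Z_2} H_2(M_T;\Z_2)=\dim_{\Z_2}H_2(M;\Z_2)-2$) is fine. The gap is in the basis you propose. Your list $\{S_1,\ldots,S_n\}\cup\{[T']\}\cup\{\text{torsion classes}\}$ has $n+1+2t$ elements, one more than the dimension $n+2t$ you just computed, so at least one element is redundant. More seriously, you never argue linear independence of the surviving classes in $H_2(M_T;\Z_2)$: ``an Euler characteristic count confirms we obtain a complete $\Z_2$-basis'' is not an argument when the cardinality does not even match. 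In particular there is no reason to expect $[T']$ to be independent of the $S_i$; the dimension count forces it to lie in their span.

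The paper avoids the free-versus-torsion bookkeeping by working over $\Z_2$ from the start. Since $T,T_d$ still span a hyperbolic summand in the nondegenerate $\Z_2$-intersection form, one takes a $\Z_2$-basis $\{g_1,\ldots,g_n,T,T_d\}$ of $H_2(M;\Z_2)$ and, by row/column operations, arranges each $g'_i$ to be orthogonal to both $T$ and $T_d$; representatives are then tubed off $T$ and survive in $M_T$. The step you are missing is the independence argument: the $n\times n$ block $N$ on the $g'_i$ satisfies $Q^{\Z_2}_M=N\oplus H$, hence $N$ is nondegenerate, hence the $g'_i$ are linearly independent in $H_2(M_T;\Z_2)$; combined with the rank computation $b_2^{\Z_2}(M_T)=n$ this forces them to be a full basis, and evenness of $N$ follows immediately from evenness of $Q^{\Z_2}_M$. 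No separate torsion analysis, and no $T'$, is needed.
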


\begin{proof}
	In this proof $b_i$ and $b_i^{\Z_2}$ will stand for the ranks of $H_i(X;\Z)$ and $H_i(X;\Z_2)$, respectively.
	
	First, we will show that $b_1^{\Z_2}(M_T) = b_1^{\Z_2}(M)-1$. For a start, there is a dual torus $T_d$, so the meridian of $T$ is homologically trivial, and thus $b_1^{\Z_2}(M - T)= b_1^{\Z_2}(M)$. But since $b_1(M_T)=b_1(M)-1=b_1(M-T)-1$, the attaching circle $\gamma$ of a new 2-handle from attaching $T\times D^2$ on $M-T$ must be non-trivial in $H_1(M-T;\Z)$. But $\gamma$ is a sum of a meridian of $T$ and a primitive curve $\alpha$ because the surgery has a coefficient $1/1$. Since $\alpha$ is primitive, it is non-trivial in $H_1(M-T;\Z_2)$ and so is $\gamma$. Hence, $b_1^{\Z_2}(M_T)=b_1^{\Z_2}(M)-1$. Then from $e(M_T)=e(M)$, we get $b_2^{\Z_2}(M_T) = b_2^{\Z_2}(M)-2$.
	
	Now we will show that $H_2(M_T;\Z_2)$ is even. In what follows, $Q^{\Z_2}_X$ denotes the intersection form of a manifold $X$ with $\Z_2$ coefficients. Pick a basis $\{g_1,\cdots g_n, T, T_d \}$ for $H_2(M;\Z_2)$, so that the intersection form $Q^{\Z_2}_M$ has a $2\times 2$ hyperbolic minor $\disp \begin{bmatrix} (Q^{\Z_2}_M)_{n+1,n+1} & (Q^{\Z_2}_M)_{n+1,n+2} \\ (Q^{\Z_2}_M)_{n+2,n+1} & (Q^{\Z_2}_M)_{n+2,n+2} \end{bmatrix}= H$, and this minor is generated by $T$ and $T_d$. We will show that $Q^{\Z_2}_{M} \cong Q^{\Z_2}_{M_T} \oplus H$. We can use simultaneous row and column operations to relate the original matrix $Q^{\Z_2}_M$ to a matrix where all other entries in the $n+1$ and $n+2$ rows and columns are $0$ away from the matrix $H$. This gives a new basis $\{g'_1,\cdots g'_n, T, T_d\}$ such that $g'_i\cdot T=g'_i\cdot T_d = 0$ for all $i$. Let $N$ be the $n\times n$ minor of this new intersection matrix spanned by $\{g'_1,\cdots g'_n\}$, and note that this matrix must be non-degenerate. For each $i$, let $S_i$ be a representative of $g'_i$. Then $S_i$ intersects $T$ in an even number of points, and the same holds for $S_i$ and $T_d$. By tubing along paths on $S_i$ connecting the pairs of intersection points, we obtain a new higher--genus representative $S'_i$ of $g'_i$ which is disjoint from $T$ and $T_d$. These $S'_i$ survive in $M_T$, showing that $\{g'_1,\cdots g'_n\}$ are well-defined classes in $H_2(M_T;\Z_2)$ and their intersections are given by $N$. Hence, $N$ may be regarded as an $n\times n$ minor in $Q^{\Z_2}_{M_T}$. But since $b^{\Z_2}_2(M_T)=n$, $N$ must be all of $Q^{\Z_2}_{M_T}$. Since $Q^{\Z_2}_M=N\oplus H$ and is even, $Q^{\Z_2}_{M_T}=N$ also must be even. It follows from Corollary \ref{cor:Z2_spin} that $M_T$ is spin.
\end{proof}

\subsection{Examples: rational homology spheres}

We illustrate the use of Proposition \ref{cor:Z2_spin} by using it to show that $L_2$ and $L'_2$ defined below have $w_2$--types $(ii)$ and $(iii)$, respectively. Let $\phi$ and $\psi$ be involutions on $S^2\times S^2$, where $\phi(x,y)=(r(x),a(y))$ and $\psi(x,y)=(a(x),a(y))$ with $r(u,v,w)=(u,v,-w)$ and $a(u,v,w)=(-u,-v,-w)$. They have no fixed points, and their quotients are $L_2$ and $L'_2$, respectively. One can easily check that both are rational homology spheres with $\pi_1 = \Z_2$.
\begin{proposition}
	$\Z_2$-intersection form of $L_2$ is 
	$\begin{pmatrix}
		0& 1 \\ 
		1 & 0
	\end{pmatrix}.$
\end{proposition}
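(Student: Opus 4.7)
The plan is to pin down $H_2(L_2;\Z_2)$, then produce explicit surface generators by descending the two natural fiber families on $S^2\times S^2$, and finally evaluate the three intersection pairings by working on the cover.

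First I would establish $H_2(L_2;\Z_2)\cong \Z_2^2$. Since $r$ and $a$ both have degree $-1$ on $S^2$, $\phi=r\times a$ preserves orientation, so $L_2$ is a closed oriented $4$-manifold with $e(L_2)=2$. Given $\pi_1(L_2)=\Z_2$ and that $L_2$ is a rational homology sphere, Hurewicz, Poincar\'e duality, and the universal coefficient theorem together force $H_1(L_2;\Z) \cong H_2(L_2;\Z)\cong \Z_2$; a second application of UCT with $\Z_2$ coefficients then gives $H_2(L_2;\Z_2)\cong \Z_2\oplus\Z_2$.

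Next, I would produce two geometric generators. Let $C \subset S^2$ denote the fixed circle of $r$ (the equator). Since $a$ is fixed-point free, $\phi$ freely swaps $S^2\times\{p\}$ and $S^2\times\{a(p)\}$, and their image in $L_2$ is an embedded $2$-sphere $A$. For $q\in C$, $\phi$ preserves $\{q\}\times S^2$ and acts on it by $a$, so its image is an embedded $\RP^2\subset L_2$, call it $P$. The three intersection numbers are then read off on $S^2\times S^2$. Perturbing $p\to p'$ with $p'\neq p,a(p)$ gives an isotopic representative of $[A]$ whose preimage is disjoint from that of $A$, so $[A]^2=0$. Sliding $q$ along $C$ to some $q'\neq q$ gives a $\phi$-equivariant isotopy (each $\{q_t\}\times S^2$ with $q_t\in C$ is $\phi$-preserved) with disjoint preimages, so $[P]^2=0$. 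Finally, the preimage of $A\cap P$ is $(S^2\times\{p,a(p)\})\cap(\{q\}\times S^2)=\{(q,p),(q,a(p))\}$, and $\phi$ identifies these two points via $(q,p)\mapsto (r(q),a(p))=(q,a(p))$, leaving a single transverse intersection point in $L_2$, so $[A]\cdot[P]=1\bmod 2$.

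The resulting intersection matrix $\begin{pmatrix}0 & 1\\1 & 0\end{pmatrix}$ is nonsingular over $\Z_2$, so $\{[A],[P]\}$ is linearly independent and hence a basis of $H_2(L_2;\Z_2)\cong\Z_2^2$, which proves the claim. The only point requiring real care is the equivariance of the isotopies used for the self-intersections: in particular, for $[P]^2$ one must slide $q$ along $C$ rather than off of it, because $\{q\}\times S^2$ is $\phi$-invariant exactly when $q\in C$ (off $C$ one obtains a sphere whose preimage has two components and thus represents a different class in the $\Z_2$-homology).
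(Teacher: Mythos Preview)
Your proof is correct and follows essentially the same approach as the paper: both descend the two sphere factors to obtain a sphere $A$ (from $S^2\times\{p,a(p)\}$) and an $\RP^2$ (from $\{q\}\times S^2$ with $r(q)=q$), then compute the three pairings by finding disjoint push-offs and a single transverse intersection. Your write-up is more detailed, in particular your justification that the push-off of $P$ must stay on the fixed circle $C$, but the underlying argument is identical.
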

\begin{proof}
	First of all, $H_2(L_2;\Z_2) \cong \Z_2^2$.
	Consider $\tilde A := S^2 \times \set{pt} \cup S^2 \times \set{a(pt)}\subset S^2\times S^2$. Similarly, let $\tilde B := \set{pt} \times S^2\subset S^2\times S^2$ for $r(pt)=pt$. These subspaces are saturated, and their quotients under the action of $\phi$ is $A\cong S^2 $ and $B\cong \RP^2$ in $L_2$. Then $A$ and $B$ intersect at a single point. Also one can perturb $A$ to get $A'$ which is disjoint from $A$, same for $B$.
\end{proof}

\begin{proposition}
	$\Z_2$-intersection form of $L'_2$ is 
	$\begin{pmatrix}
		0& 1 \\ 
		1 & 1
	\end{pmatrix}.$
\end{proposition}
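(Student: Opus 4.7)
The plan is to exhibit explicit generators of $H_2(L'_2;\Z_2)$ and compute their pairwise intersections directly. First, I would verify that $b_2(L'_2;\Z_2) = 2$: since $\psi$ is orientation-preserving (being a product of two orientation-reversing antipodal maps), $L'_2$ is a closed orientable $4$-manifold with $\chi(L'_2) = \chi(S^2 \times S^2)/2 = 2$, and since $\pi_1(L'_2) = \Z_2$ we have $b_1^{\Z_2}(L'_2) = 1$; combining with Poincar\'e duality over $\Z_2$ yields $b_2^{\Z_2}(L'_2) = 2$.

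Next I would introduce two $\psi$-saturated surfaces in $S^2 \times S^2$: the union $\tilde A := S^2 \times \{p, a(p)\}$, whose two components are interchanged by $\psi$, and the diagonal $\tilde D := \Delta = \{(x,x) : x \in S^2\}$, which is $\psi$-invariant with $\psi|_\Delta$ equal to the antipodal map on $\Delta \cong S^2$. Their quotients are $A \cong S^2$ and $D \cong \RP^2$ in $L'_2$. A direct count gives $\tilde A \cap \Delta = \{(p,p),(a(p),a(p))\}$, a single $\psi$-orbit of size two, so $A \cdot D = 1$; and $A \cdot A = 0$ by perturbing $A$ to the disjoint parallel copy coming from $S^2 \times \{p', a(p')\}$ for generic $p'$. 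In particular $A$ and $D$ are linearly independent in $H_2(L'_2;\Z_2)$ and therefore form a basis.

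The crux is the self-intersection $D \cdot D = 1$. I would descend the normal bundle from the cover: the standard anti-diagonal splitting $T(S^2\times S^2)|_\Delta = T\Delta \oplus \nu(\Delta)$ identifies $\nu(\Delta)$ with $TS^2$, and a short direct computation shows that $\psi_*$ acts on this copy of $TS^2$ as the antipodal lift $(x,v) \mapsto (-x,-v)$. Hence $\nu(D) \cong TS^2/\psi \cong T\RP^2$. Since $D$ is a non-orientable surface in the orientable $4$-manifold $L'_2$, Proposition~\ref{prop:w_2_nonorientable_surface} (or equivalently the mod $2$ count of zeros of a generic section of $\nu(D)$) yields
\[
D \cdot D \equiv \langle w_2(T\RP^2),\,[\RP^2]_{\Z_2}\rangle \equiv \chi(\RP^2) \equiv 1 \pmod 2.
\]

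Putting these computations together, the intersection matrix of $L'_2$ in the basis $(A,D)$ is $\begin{pmatrix} 0 & 1 \\ 1 & 1 \end{pmatrix}$, as claimed. The main obstacle is the $D \cdot D$ computation: the ``push off to a disjoint parallel copy'' trick used for the other three entries fails because $D \cong \RP^2$ is non-orientable, so one must instead carefully track the induced $\Z_2$-action on the normal bundle of $\Delta$ and recognize the quotient bundle as $T\RP^2$.
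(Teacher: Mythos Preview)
Your proof is correct and uses the same generators $A$ and $D$ as the paper, with identical arguments for $A\cdot A=0$ and $A\cdot D=1$. The only divergence is in the computation of $D\cdot D$. The paper produces an explicit perturbation: it takes $\tilde D' = \{(x,h(x)) : x\in S^2\}$ for the rotation $h(a,b,c)=(-a,-b,c)$, observes that $\tilde D'$ is $\psi$--invariant and (equivariantly) isotopic to $\tilde D$, and counts that $\tilde D\cap\tilde D'$ consists of the two points $(\pm(0,0,1),\pm(0,0,1))$, which form a single $\psi$--orbit, giving $D\cdot D'=1$. You instead identify $\nu(D)\cong T\RP^2$ by tracking the $\psi$--action on the anti-diagonal splitting and read off $D\cdot D \equiv \chi(\RP^2)\equiv 1 \bmod 2$ as the mod~$2$ Euler number of the normal bundle. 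Both arguments are short; the paper's is more hands-on and avoids any bundle language, while yours is cleaner conceptually and makes transparent \emph{why} no disjoint push-off can exist. One small remark: your invocation of Proposition~\ref{prop:w_2_nonorientable_surface} is not the relevant statement here (that proposition relates $[S]^2$ to $w_2$ of the ambient manifold, not of $\nu(S)$); the parenthetical ``mod $2$ count of zeros of a generic section of $\nu(D)$'' is the correct justification on its own.
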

\begin{proof}
	Similarly, let $\tilde A := S^2 \times \set{pt} \cup S^2 \times \set{a(pt)}\subset S^2\times S^2$. Let $\tilde D = \set{(x,x)\mid x\in S^2}$ be the diagonal in $S^2 \times S^2$ and denote its quotient in $L'_2$ by $D$. Again, $A\cdot A=0$ and $A\cap D = \set{[(pt,pt)]}$. The last thing to check is $D\cdot D = 1$. Consider $\tilde D' = \set{(x,h(x)) \mid x \in S^2}$ where $h(a,b,c)= (-a,-b,c)$. Then $[\tilde D'] = [\tilde D] \in H_2(S^2 \times S^2,\Z_2)$. Let $D'$ be quotient of $\tilde D'$. Since $\tilde D'$ and $\tilde D$ intersect twice, $D\cdot D = D\cdot D'= 1$.
\end{proof}

\subsection{Doubles that preserve spin structures}

Let $M$ be a smooth, spin $4$--manifold, and $\Sigma$ an embedded orientable surface with trivial normal bundle. The first step to showing that the $\Z_2$--construction of $M$ along $\Sigma$ is spin is showing that the double of $M$ over $\Sigma$ is spin. This will at least rule out the $\Z_2$--construction having $w_2$-type $(i)$. By \cite[Proposition 1.2]{GompfNew}, one can take a fiber sum of spin 4-manifolds $M_1$ and $M_2$ along a surface in such a way that spin structures will extend to the new manifold. But this is not immediately applicable to our case because Gompf's fiber-sum gluing map preserves orientation of the $S^1$-fiber, whereas we glue $\Sigma \times S^1$ using automorphism $f\times a$, where $f$ is an orientation-reversing map on $\Sigma$ and $a$ is antipodal. We rectify this by generalizing Gompf's result to our situation.

\begin{proposition}
	Let $(M, \mathfrak{s})$ be a spin $4$-manifold and $\Sigma$ be a closed embedded orientable surface with a trivial neighborhood. Then there exists orientation reversing diffeomorphism $r:\Sigma \to \Sigma$ such that $(M-\Sigma) \cup_{r\times a} (M-\Sigma)$ has a spin structure. Moreover, even if $M$ is not spin but $M-\Sigma$ is spin, $(M-\Sigma) \cup_{r\times a} (M-\Sigma)$ is still spin.
	
	\label{prop:spin_preserving_map}
\end{proposition}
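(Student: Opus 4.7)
The plan is to reduce the existence of a spin structure on the double $DN := (M - \Sigma) \cup_{r \times a} (M - \Sigma)$ to a compatibility condition on the gluing map over the boundary $\partial N$, where $N := M \setminus \nu \Sigma$. A trivialization of $\nu \Sigma$ identifies $\partial N$ with $\Sigma \times S^1$. In both parts of the statement, the spin structure on $N$ — obtained by restriction from $M$ when $M$ is spin, or given directly in the second case — restricts to some spin structure $\mathfrak{s}_0$ on $\partial N$. Since $DN$ inherits a spin structure precisely when $(r \times a)^* \mathfrak{s}_0 = \mathfrak{s}_0$, the proof reduces to choosing $r$ and $a$ so that this holds.

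The key structural input is that, writing $g$ for the genus of $\Sigma$, every spin structure on $\Sigma \times S^1$ is an external product of a spin structure on $\Sigma$ and one on $S^1$. Both sides have cardinality $2^{2g+1}$: spin structures on the product form a torsor over $H^1(\Sigma \times S^1; \Z_2)$, while there are $2^{2g}$ spin structures on $\Sigma$ and $2$ on $S^1$. Restriction to the factors $\Sigma \times \{pt\}$ and $\{pt\} \times S^1$ gives a set-theoretic inverse to the product construction, so the decomposition $\mathfrak{s}_0 \leftrightarrow (\mathfrak{s}_\Sigma, \mathfrak{s}_{S^1})$ is bijective. By naturality of external products under product diffeomorphisms, the compatibility condition splits into the pair $r^* \mathfrak{s}_\Sigma = \mathfrak{s}_\Sigma$ and $a^* \mathfrak{s}_{S^1} = \mathfrak{s}_{S^1}$.

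I would take $a \colon S^1 \to S^1$ to be the rotation $z \mapsto -z$, an orientation-preserving involution; both spin structures on $S^1$ are invariant under arbitrary rotations, so the second condition holds automatically. For the surface factor I would take $r$ to be an orientation-reversing involution acting trivially on $H_1(\Sigma; \Z_2)$. Concretely, realize $\Sigma$ as a symmetric genus-$g$ pretzel embedded in $\R^3$ and let $r$ be reflection across the plane of symmetry: this sends each generator of a standard symplectic basis $\{a_i, b_i\}$ of $H_1(\Sigma;\Z)$ to $\pm$ itself, hence acts as the identity on $H_1(\Sigma; \Z_2)$. By Atiyah's identification of spin structures on a closed orientable surface with quadratic refinements of the mod-$2$ intersection form, the pullback action of $r$ on spin structures is determined by $r_*$ on $H_1(\Sigma;\Z_2)$, which is trivial, so $r^* \mathfrak{s}_\Sigma = \mathfrak{s}_\Sigma$ for every $\mathfrak{s}_\Sigma$.

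The main obstacle I expect is verifying the naturality of the external product decomposition under product diffeomorphisms. At the level of principal bundles this amounts to tracing through the inclusion $\Spin(2) \times_{\Z_2} \Spin(1) \hookrightarrow \Spin(3)$ lifting the stabilization $SO(2) \times SO(1) \hookrightarrow SO(3)$ induced by the tangent bundle splitting $T(\Sigma \times S^1) = p_1^* T\Sigma \oplus p_2^* TS^1$, and checking that product $\Spin$-bundles pull back to products under product diffeomorphisms. Once this compatibility is in place, the choice of $r$ and $a$ above completes the argument, and since the construction only uses spinness of $N$, both parts of the proposition follow simultaneously.
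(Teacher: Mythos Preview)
Your proposal is correct and follows essentially the same approach as the paper. Both arguments choose an orientation-reversing involution $r$ on $\Sigma$ (a reflection across a plane of symmetry) that acts trivially on $H_1(\Sigma;\Z_2)$, pair it with the antipodal map $a$ on $S^1$, and conclude that $r\times a$ fixes the boundary spin structure so that the two copies of $N$ glue spin-compatibly. The only cosmetic difference is that the paper argues directly from the classification of spin structures on $\Sigma\times D^2$ (resp.\ $S^1\times\Sigma$) by $H^1(\Sigma;\Z_2)$ (resp.\ $H^1(S^1\times\Sigma;\Z_2)$), whereas you unpack this via the external-product decomposition and Atiyah's quadratic-form description; these are equivalent formulations, and your worry about naturality of the product decomposition is a non-issue since $a$ is isotopic to the identity.
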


\begin{proof}
	Both copies of $M$ have the same spin structure, and so their restriction to $\Sigma \times D^2$ is the same. Since spin structures on $\Sigma\times D^2$ are classified by $H^1(\Sigma;\Z_2)$, a gluing map which induces the identity on $H^1(\Sigma;\Z_2)$ will carry the spin structure $\mathfrak{s}$ on one copy of $M-\Sigma$ to the other. If $\{x_1,y_1,\cdots x_g,y_g\}$ are the symplectic generators for $H_1(\Sigma;\Z)$, there's a simple reflection map $r$ of $\Sigma$ which fixes each $x_i$ and reverses the orientation of each $y_i$. Such a map induces the identity on $H^1(\Sigma;\Z_2)$. Thus $a\times r:S^1\times \Sigma\to S^1\times \Sigma$ gives the desired gluing, where $a$ is the antipodal map on $S^1$.
	
	To see the last claim, observe that the above proof goes through verbatim if we start with a spin structure on $M-\Sigma$, except now we're concerned with the spin structure's restriction to $\partial \nu(\Sigma)=S^1\times \Sigma$. Since the map $a\times r$ induces the identity on $H^1(S^1\times \Sigma;\Z_2)$, the double is spin in this case as well.
\end{proof}

Before showing that the $\Z_2$--construction is spin, we give $H_2$ generators of the mapping torus of $\Sigma$ with monodromy $r$, which is a submanifold of our $\Z_2$--construction.

\begin{proposition}\label{prop: H_2(MT) generators}
	Let $r$ be the reflection map on a closed orientable genus $g$ surface $\Sigma$ from the proof of Proposition \ref{prop:spin_preserving_map}, and let $\{x_1,y_1,\cdots x_g,y_g\}$ be the standard symplectic generators of $H_1(\Sigma;\Z)$. Let $Y$ be the mapping torus $Y=MT(\Sigma,r)$, which is a fibration over $S^1$ with $\Sigma$ fibers. Then $H_2(Y;\Z_2)$ is generated by the following:\begin{itemize}
		\item The surface $\Sigma$, which is a fiber of the mapping torus.
		
		\item A collection of $g$ embedded tori which are obtained by taking the parallel transport of each $x_i\subset \Sigma$ along the $S^1$ base of the fibration on $Y$.
		
		\item A collection of $g$ embedded Klein bottles which are obtained by taking the parallel transport of each $y_i\subset \Sigma$ along the $S^1$ base of the fibration on $Y$.
	\end{itemize}
\end{proposition}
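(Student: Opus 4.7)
The plan is to run the Wang exact sequence for the fibration $\Sigma \hookrightarrow Y \to S^1$ with $\Z_2$ coefficients:
\[
\cdots \to H_n(\Sigma;\Z_2) \xrightarrow{r_* - \id} H_n(\Sigma;\Z_2) \to H_n(Y;\Z_2) \xrightarrow{\partial} H_{n-1}(\Sigma;\Z_2) \xrightarrow{r_* - \id} \cdots.
\]
The crucial observation is that although $r$ reverses the orientation of each $y_i$ integrally, this reversal is invisible over $\Z_2$, so $r_*$ acts as the identity on $H_*(\Sigma;\Z_2)$. Every connecting map $r_* - \id$ therefore vanishes, and the sequence collapses into a short exact sequence
\[
0 \to H_2(\Sigma;\Z_2) \to H_2(Y;\Z_2) \xrightarrow{\partial} H_1(\Sigma;\Z_2) \to 0.
\]
In particular $\dim_{\Z_2} H_2(Y;\Z_2) = 2g+1$, which matches the count of proposed generators.

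The next step is to match each proposed class to an element of this sequence. A regular fiber represents the image of the generator of $H_2(\Sigma;\Z_2)$ in $H_2(Y;\Z_2)$, and so generates $\ker \partial$. For the remaining $2g$ classes, I would use the geometric description of the Wang connecting map: for a $2$-cycle $C$ transverse to a chosen regular fiber $\Sigma_0$, one has $\partial[C] = [C \cap \Sigma_0]$. The torus $T_i = x_i \times S^1$, which is embedded because $r$ fixes $x_i$ pointwise, meets $\Sigma_0$ in exactly $x_i$, so $\partial[T_i] = [x_i]$. Likewise the Klein bottle $K_i$ from parallel-transporting $y_i$ meets $\Sigma_0$ in $y_i$, giving $\partial[K_i] = [y_i]$. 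Since $\{[x_i], [y_i]\}$ forms a basis of $H_1(\Sigma;\Z_2)$, the classes $\{[T_i], [K_i]\}$ furnish a splitting of $\partial$; together with $[\Sigma]$ they generate $H_2(Y;\Z_2)$.

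The main subtle point I anticipate is verifying that $K_i$ really is an embedded Klein bottle. Restricting the fibration $Y \to S^1$ to the saturated circle-sub-bundle swept out by $y_i$ gives an embedded $S^1$-bundle over $S^1$ whose monodromy $r|_{y_i}$ is an orientation-reversing involution of the circle; such a bundle has zero Euler characteristic and is non-orientable, so it must be a Klein bottle, embedded in $Y$ by construction. Beyond this bookkeeping, the argument is essentially a dimension count from the Wang sequence combined with the standard geometric identification of $\partial$ as intersection with a fiber.
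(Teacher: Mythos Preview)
Your proof is correct and follows essentially the same approach as the paper: both use the mapping torus/Wang long exact sequence with $\Z_2$ coefficients and the observation that $r_*$ is the identity on $H_*(\Sigma;\Z_2)$ to get $\dim H_2(Y;\Z_2)=2g+1$, then identify the listed classes with the summands. The only cosmetic difference is that you verify generation by computing the connecting map $\partial$ geometrically as intersection with a fiber, whereas the paper instead verifies linear independence by pairing against explicit dual $1$--cycles; both arguments are valid and amount to the same thing.
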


\begin{proof}
	Since the map $r_*$ induces the identity on $H_*(\Sigma;\Z_2)$, the long exact sequence for the homology of a mapping torus shows that $H_2(Y;\Z_2)\cong H_2(\Sigma;\Z_2)\oplus H_1(\Sigma;\Z_2)$ \cite[Example 2.48]{Hatcher_AT}. This implies that the rank of $H_2(Y;\Z_2)$ is $2g+1$. We will describe its $2g+1$ elements and then we will argue that they are linearly independent. The element coming from $H_2(\Sigma;\Z_2)$ is the class $[\Sigma]$ of a regular fiber. The elements corresponding to the $H_1(\Sigma;\Z_2)$ summand are the parallel transports of the generators of $H_1(\Sigma;\Z_2)$ in the mapping torus, which are the $g$ pairs of tori and Klein bottles in the proposition statement. In other words, they are images of $I \times x_i$ and $I\times y_i$ under $q: I \times \Sigma_g \to Y$ .

	To see that they are independent let
	$$a_0[\Sigma] + \sum b_i [q(I\times x_i)] + \sum c_i [q(I \times y_i)] = 0.$$
	
	Take a cup product of both sides of the equation with $[q(I\times \set{pt})]$. By intersection theory this gives $a_0$ on the left-hand side so $a_0= 0$. Similarly multiply with $[q(\set{pt} \times x_i)]$ and $[q(\set{pt} \times y_i)]$ to get $c_i= 0$ and $b_i=0$.
\end{proof}
	
\subsection{Intersection form of $\mathbf{\Z_2}$--construction} \label{sec:Z2_spin}

Equipped with the formula $w_2(TM)\cdot x = x \cdot x$ for any $x\in H_2(M;\Z_2)$, we can detect spinness of $M$ if we have generators of $H_2(M;\Z_2)$ and their squares. The typical manifold that we will start with will be a $\Z_2$--construction of a $4$--manifold $X$ along a surface $\Sigma \subset X$, where $X$ is spin and $X-\Sigma$ is simply-connected. We will denote the result of the $\Z_2$--construction by $Z$. The gluing map in the $\Z_2$--construction is assumed to be $a\times r$, where $a:S^1\to S^1$ is the antipodal map and $r$ is the orientation--reversing diffeomorphism of $\Sigma$ described in Proposition \ref{prop:spin_preserving_map}. Proposition \ref{prop:spin_preserving_map} shows that the universal cover of $Z$ is spin, and we next show that $Z$ is also spin. It will be useful to think of $Z$ as $(X-\nu \Sigma_2)/(a\times r)$, where we are quotienting by $a\times r$ on $\partial(X-\nu\Sigma)$. Let $MT$ be a mapping torus of $\Sigma$ with monodromy $r$. Then $Z$ has an embedded copy of $MT$, which is the image of $\partial(X-\nu\Sigma)$ after quotienting by $a\times r$. Its neighborhood $\nu MT\subset Z$ is a $D^1$--bundle over $MT$ whose total space is orientable. In particular, $\partial(\nu MT)$ is a copy of the orientation double cover of $MT$, $\Sigma_g \times S^1$. Taking this into consideration, we can decompose $Z$ as $Z= (X-\Sigma) \cup_{S^1\times \Sigma} \nu MT$.

\begin{proposition}\label{prop:Z_H2_generators}
	Let $X$ be a smooth $4$--manifold with an embedded square zero orientable surface $\Sigma$, such that $X-\nu \Sigma$ is simply--connected. Let $Z$ be the $\Z_2$--construction of $X$ along $\Sigma$. With the decomposition $Z= (X-\Sigma) \cup_{S^1\times \Sigma} \nu MT$ as described above, $H_2(Z;\Z_2)$ is generated by the following classes:
	
	\begin{itemize}
		\item[(i)]  Elements in $H_2(Z;\Z_2)$ represented by surfaces in $X-\Sigma \subset Z$.
		
		\item[(ii)] The collection of $g$ embedded tori and $g$ embedded Klein bottles, which are generators of $H_2(MT;\Z_2)$ (c.f. Proposition \ref{prop: H_2(MT) generators}).
		
		\item [(iii)] One surface $S_{\mu}\subset Z$, where $S_\mu \cap \partial \nu MT=S^1\times \{pt\}$, where we identify $\partial \nu MT=\partial (X-\nu \Sigma)$ with $S^1\times \Sigma$. 
	\end{itemize}
\end{proposition}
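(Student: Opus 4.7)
The plan is to extract the generators of $H_2(Z; \Z_2)$ from the Mayer--Vietoris sequence for the decomposition $Z = U \cup V$ with $U = X - \nu\Sigma$, $V = \nu MT$, and $U \cap V$ deformation-retracting onto $\partial \nu MT \cong S^1 \times \Sigma$. All homology in this proof is taken with $\Z_2$ coefficients; the relevant segment is
\begin{equation*}
H_2(U) \oplus H_2(V) \xrightarrow{\phi} H_2(Z) \xrightarrow{\partial} H_1(S^1 \times \Sigma) \xrightarrow{\theta} H_1(U) \oplus H_1(V).
\end{equation*}

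Since $r$ is orientation-reversing on $\Sigma$, the mapping torus $MT$ is non-orientable, and because $\nu MT$ is orientable (sitting inside $Z$), the inclusion $\partial \nu MT = S^1 \times \Sigma \hookrightarrow \nu MT \simeq MT$ is exactly the orientation double cover $MT(\Sigma, r^2) = S^1 \times \Sigma \to MT$. Because $r_*$ acts as the identity on $H_*(\Sigma; \Z_2)$, the Wang sequence gives $H_1(MT; \Z_2) \cong \Z_2^{2g+1}$, generated by the base circle of $MT$ together with the classes $[\alpha_i]$ in a fiber, while $H_2(MT; \Z_2) \cong \Z_2^{2g+1}$ has the generators described in Proposition \ref{prop: H_2(MT) generators}. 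Under the double covering map, the fiber loops $[\{pt\} \times \alpha_i] \subset S^1 \times \Sigma$ push forward to their counterparts in $H_1(MT; \Z_2)$, whereas the meridian $\mu := [S^1 \times \{pt\}]$ wraps twice around the base circle of $MT$ and hence vanishes modulo $2$. Combined with $H_1(U) = 0$ (since $X - \nu\Sigma$ is simply connected), this shows $\ker(\theta) = \Z_2 \langle \mu \rangle$, and therefore $\Im(\partial) = \Z_2 \langle \mu \rangle$.

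Exactness then gives $H_2(Z; \Z_2) = \Im(\phi) + \Z_2 \langle [S_\mu] \rangle$ for any class $[S_\mu]$ satisfying $\partial [S_\mu] = \mu$. By the standard geometric description of the connecting homomorphism, $[S_\mu]$ is represented by an embedded surface meeting $\partial \nu MT$ transversely in a circle homologous to $\mu = S^1 \times \{pt\}$, producing generator (iii). The image $\Im(\phi)$ is generated by $H_2(U) \oplus H_2(V)$: classes in $H_2(U) = H_2(X - \nu\Sigma; \Z_2)$ are represented by surfaces in $X - \Sigma$, producing (i); while $H_2(V) = H_2(MT; \Z_2)$ contributes the $2g$ tori and Klein bottles of (ii) together with the fiber class $[\Sigma]_{\mathrm{fiber}}$. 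The latter is redundant: applying $\phi \circ \psi = 0$ (where $\psi$ is the preceding map in Mayer--Vietoris) to $[\Sigma] \in H_2(S^1 \times \Sigma; \Z_2)$ yields $j_* [\Sigma]_{\mathrm{fiber}} = i_* [\{pt\} \times \Sigma]$ in $H_2(Z; \Z_2)$, so the fiber class is homologous to $\{pt\} \times \Sigma$ pushed into the interior of $U$, which already lies in the span of (i).

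The main technical point is the identification of $\partial \nu MT \to MT$ as the orientation double cover and the attendant computation that $\mu$ wraps twice around the base of $MT$ and hence vanishes in $H_1(MT; \Z_2)$; everything else is a routine reading of the Mayer--Vietoris sequence.
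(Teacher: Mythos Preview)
Your proof is correct and follows essentially the same route as the paper's: both apply Mayer--Vietoris to the decomposition $Z=(X-\nu\Sigma)\cup\nu MT$, compute that the meridian $\mu=[S^1\times\{pt\}]$ generates the kernel of $H_1(S^1\times\Sigma)\to H_1(MT)$ (using that it wraps twice around the base circle), read off $S_\mu$ from the connecting homomorphism, and dispose of the fiber class $[\Sigma]\in H_2(MT)$ by pushing it into $X-\Sigma$. Your use of $\phi\circ\psi=0$ for that last step is just the algebraic version of the paper's geometric push-off, and your explicit identification of $\partial\nu MT\to MT$ as the orientation double cover is a nice way to see why $\mu\mapsto 0$.
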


\begin{proof}
	In this proof, we will always use $\Z_2$ coefficients for homology, and omit them for brevity. First, apply Mayer-Vietoris to $Z = (X- \nu \Sigma) \cup (\nu MT)$.
	
	$$\cdots H_2(X-\nu \Sigma)\oplus H_2(MT)\to H_2(Z)\to H_1(S^1 \times \Sigma) \to 0 \oplus H_1(MT) \to H_1(Z)\to 0. $$
	
	It's easily checked that $H_1(MT)= \Z_2^{2g} \oplus \Z_2$, with generators $c_i$ coming from the fiber $\Sigma$ and $b$, the base circle. The map $H_1(S^1\times \Sigma)\to H_1(MT)$ is induced by the inclusion $\partial(\nu MT)=S^1\times \Sigma \hookrightarrow \nu MT$, which sends $[S^1\times \{pt\}]$ to $2b=0$ and is the identity on the generators $c_i$ of $\Sigma$ (which we may also regard as generators of $H_1(Z)$ and $H_1(S^1\times \Sigma)$). Therefore the image of the map $H_1(S^1\times \Sigma)\to H_1(MT)$ is isomorphic to $H_1(\Sigma)$, so the long exact sequence becomes
	
	$$\cdots H_2(X-\nu \Sigma)\oplus H_2(MT)\to H_2(Z)\to H_1(S^1 \times \Sigma) \to H_1(\Sigma)\to 0.$$
	
	We may deduce further that the image of $H_2(Z) \to H_1(S^1 \times \Sigma)$  is the kernel of the aforementioned $H_1(S^1\times \Sigma)\to H_1(MT)$, namely the meridian generator $S^1\times \set{pt} \subset S^1 \times \Sigma = \partial (X-\nu \Sigma)$.
	So we get another exact sequence:
	$$\cdots H_2(X-\nu \Sigma_2)\oplus H_2(MT)\to H_2(Z)\to H_1(\mu_{\Sigma})\to 0.$$
	
	So $H_2(Z)$ is generated from the right side by a class mapping to $\mu_{\Sigma}$. Considering the geometric meaning of the boundary homomorphism $H_2(Z)\to H_1(S^1\times \Sigma_g)$, we see that the class mapping to $\mu_{\Sigma}$ is precisely the class represented by $S_\mu$ in the proposition statement. From the left side of the sequence, we see that $H_2(Z)$ is also generated by the images of generators of $H_2(X-\Sigma)$ and $H_2(MT)$. Thus we've found a complete generating set of $H_2(Z)$. It remains to mention that we can skip the generator represented by the surface $\Sigma$ coming from Proposition \ref{prop: H_2(MT) generators}. The surface $\Sigma \subset MT$ can be pushed-off to $X-\Sigma$, so this class is covered by (i).
	
\end{proof}

\begin{lemma} 
	There exists a Möbius band $W\subset \nu MT$ such that $\partial W = \mu_{\Sigma} \subset \partial(\nu MT)$, and a parallel copy of $W$ is a disjoint Möbius band.
	\label{lem:W_mb}
\end{lemma}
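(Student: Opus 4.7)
The plan is to build $W$ explicitly from a fixed point of the monodromy $r$. First I would parametrize $\nu MT$ as the quotient $([0,1] \times \Sigma \times [-1,1])/{\sim}$, where $(0, p, s) \sim (1, r(p), -s)$. This realizes $\nu MT$ as the twisted $D^1$-bundle over the non-orientable $MT$, and the orientation double cover of $\partial(\nu MT)$ then appears naturally as $S^1 \times \Sigma$, with the meridian $\mu_\Sigma$ identified with the double cover of the base circle of $MT$.

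Next I would use that the reflection $r$ of Proposition \ref{prop:spin_preserving_map}, which fixes each $x_i$ and reverses each $y_i$, has a non-empty fixed set --- in fact a disjoint union of embedded circles in $\Sigma$. Pick $p_0 \in \Sigma$ with $r(p_0)=p_0$, and let $W$ be the image of $[0,1] \times \{p_0\} \times [-1,1]$ under the quotient map. Because the identification $(0,p_0,s) \sim (1,p_0,-s)$ glues the two ends of the rectangle with a flip in the $s$-coordinate, $W$ is a smoothly embedded Möbius band. Tracing $\{s=\pm 1\}$ through the identifications shows $\partial W$ is a single embedded circle that wraps twice around the base of $MT$, which under the identification $\partial(\nu MT) = S^1 \times \Sigma$ is exactly $\mu_\Sigma$ at $p_0$.

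For the parallel copy, pick a second point $p_0'$ near $p_0$ on the same fixed circle of $r$ through $p_0$, and define $W'$ by the same formula. Since $p_0 \neq p_0'$, the bands $W$ and $W'$ are disjoint, and $W'$ is a Möbius band by the identical argument. Equivalently, one can reason intrinsically: at $p_0$, the differential $dr_{p_0}$ acts on $T_{p_0}\Sigma$ as a reflection, so the rank-$2$ normal bundle of $W$ in $\nu MT$ splits as a trivial line bundle (tangent to the fixed circle) plus a Möbius line bundle (transverse to it), and a nowhere-vanishing section of the trivial summand pushes $W$ off itself.

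The only step requiring any genuine care is the existence of a fixed point of $r$, which is immediate from the prescribed action on the symplectic basis. Once such a $p_0$ is selected, both the Möbius-band structure of $W$ and the disjoint parallel copy fall out of the explicit model.
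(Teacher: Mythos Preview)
Your proof is correct and follows essentially the same approach as the paper: both pick a fixed point $p_0$ of $r$, take the $D^1$-fiber over the circle in $MT$ through $p_0$ to get the Möbius band, and use a nearby fixed point for the disjoint parallel copy. Your explicit coordinate model $([0,1]\times\Sigma\times[-1,1])/{\sim}$ makes the argument slightly more concrete than the paper's bundle-theoretic phrasing, but the underlying idea is identical.
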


\begin{proof}
	$\nu(MT)$ is a non-orientable $D^1$-bundle over $MT$, and $MT$ is a non-orientable $\Sigma$ bundle over $S^1$. This gives us the following stack of fiber--bundle diagrams:\
	\begin{center}
		\begin{tikzcd}
			I \arrow[r,hook] & \nu MT \arrow[d,"g"] \\
			\Sigma_2 \arrow[r, hook] & MT \arrow[d, "f"] \\		
			& S^1
		\end{tikzcd}
	\end{center} Let $p\in \Sigma_g$ be a fixed point of the reflection $r$ (note that there are many such fixed points). Then the parallel transport of $p$ in the fibration $MT\to S^1$ is a copy of $S^1$, denoted $\tilde S$. Its preimage $g^{-1}(\tilde S)$ is a non-orientable $D^1$--bundle over $\tilde S$, and so is a Möbius band in $\nu MT$. Its boundary is a copy of $S^1\times \{p\}\subset S^1\times \Sigma_g$. Note that if we take another fixed point $p'$ close to $p$, we get a disjoint push-off Möbius band whose boundary is $S^1\times \{p'\}$.
	
\end{proof}

\begin{proof} [Proof of Theorem \ref{thm:Z2_spin}]
	Recall from Proposition \ref{prop:w_2_nonorientable_surface} that if each generator of $H_2(Z;\Z_2)$ has even self--intersection, then $w_2(Z)\equiv 0$, and so $Z$ is spin. We'll continue to shorten $H_2(Z;\Z_2)$ to $H_2(Z)$ in this proof. Consider the generators of $H_2(Z)$ given in Proposition \ref{prop:Z_H2_generators}. Since $X$ is spin, elements in $(i)$ are represented by orientable surfaces and have even square. For classes of type $(ii)$, one can take a parallel copy of each $c_i$ generator of $H_1(\Sigma_g)$ to get a disjoint push-off of a torus or Klein bottle. Hence, the only surface which may have odd self--intersection is $S_\mu$.
	
	By Mayer-Vietoris, $S_{\mu}$ can be represented as a union of $S_1$ and $S_2$ with $[S_1] \in H_2(X-\nu \Sigma_2, \partial)$ and $[S_2] \in H_2(\nu MT, \partial)$, so that $\partial S_1= \mu = \partial S_2$. We may use the Möbius band $W$ from Lemma \ref{lem:W_mb} as $S_2$. Let $W'$ be its push-off. Since $H_1(X-\Sigma_g)=0$, $\partial W$ and $\partial W'$ both bound surfaces in $X-\Sigma_g$, which we denote $S$ and $S'$, respectively. By considering the inclusion $X-\Sigma_g\subset X$, these two surfaces can be thought of as (non-closed) surfaces in $X$, which we can cap off with the disjoint disks $D^2\times \{p\}$ and $D^2\times \{p'\}$ to get closed surfaces $\bar S,$ $\bar S'\subset X$. Since $X$ is spin, $\bar S\cdot \bar S'\equiv 0\bmod 2$, and since $D^2\times \{p\} \cap D^2\times \{p'\}=\varnothing$, we can assume all intersection points are away from these two disks. Hence the surfaces $S$ and $S'$ also have an even number of intersection points. Then the surfaces $S\cup_{S^1\times \{p\}} W$ and $S'\cup_{S^1\times\{p'\}} W'$ are closed surfaces in $Z$, each intersecting $\partial (\nu MT)$ in a meridian. It follows that $[S\cup W]=S_\mu$, and that $[S'\cup W']$ is its push-off. Hence $S_\mu$ has even self-intersection, and so $Z$ is spin.
\end{proof}

\section{$\Z_2$--geography for $w_2$--type (ii).}\label{sec:type2}

This section aims to find smooth, spin, irreducible manifolds with order two $\pi_1$. The universal cover of such manifolds will also be spin, and so these are manifolds of $w_2$--type $(ii)$. Such manifolds will have an intersection form of $n(-E_8)\oplus \ell H$ for some even $n$. Under the condition $\ell \geq 2n-1$, these manifolds will be homeomorphic to $L_2\# E(n) \# k(S^2\times S^2)$, where $k= \ell + 1 - 2n$. Let $R_{n,k}:=L_2\# E(n) \# k(S^2\times S^2)$. Our goal here is to find irreducible copies of $R_{n,k}$ for each $(n,k)$--coordinate with $n$ even and $k\geq 0$.

\subsection{Even $\mathbf{b_2^+}$}\label{sec:type2_even} In this subsection, we find spin irreducible copies of $R_{n,k}=L_2\# E(n) \# k(S^2\times S^2)$ with even $b_2^+$. When $n>0$, $R_{n,k}$ has an intersection form of $n(-E_8)\oplus (k+2n-1)H$, and has algebraic invariants $e(R_{n,k})=2k+12n$, $\sigma(R_{n,k}) = -8n$, and $b_2^+ (R_{n,k}) = k+2n-1$. When $n=0$, $R_{n,k}$ has $kH$ as its intersection form, $e=2+2k$, $\sigma =0$, and $b_2^+ = k$. From this we deduce that this subsection populates the $(n,k)$ integral lattice spanned by $(n,k)$--coordinates where $n$ is strictly positive and even, and $k$ is odd, along with the region spanned by $(0,k)$ for $k$ even. More formally, we aim to realize the following coordinates:
\[
	\{(n,k)\in \Z_{>0}\times \Z_{>0} : n \text{ is even, } k \text{ is odd}\} \cup \{(0,k) \in \{0\} \times \Z_{>0}: k\text{ is even}\}.
\]

\subsubsection{$R_{2s+4, 2n+1}$ for $n,s\geq 0$ ($\sigma \leq  -32$)} \label{sec:sigma<-32}
Park and Szabó in \cite{Park_Szabo} fill a simply-connected, irreducible, spin geography. We will reintroduce their main construction for the convenience of the reader and follow the notation of the authors.

\begin{remark}[Construction of $M_n(s)$] \label{rmk:Mn(s)_construction}
	Let $Y$ be the mapping torus on $T^2$ with monodromy $\phi(a,b) = (ab,b)$. Define $Z=S^1\times Y$, and $W= Z\#_{T_2} Z$ a fiber sum along certain tori in $Z$. Although $W$ has a square zero genus two surface $\Sigma_W$, it's not obvious if its complement is simply-connected. Next, consider the elliptic surface $E(2)$. A \emph{nucleus} of $E(2)$ is a regular neighborhood of a cusp fiber along with a section which transversely intersects it. It's shown in \cite{Gompf-Mrowka} that $E(2)$ contains three disjoint nuclei, and that we can arrange for the regular fiber and section in  the first nucleus and the regular fiber from the second nucleus to all be symplectic. Let $N_1$ and $N_2$ denote the two nuclei, and let $S_i$ and $T_i$ be the embedded spheres and tori in each $N_i$. The knot surgered $E(2)$, denoted $E(2)_K$, is obtained by performing Fintushel-Stern knot surgery on $E(2)$ along $T_1$ \cite{FS_knots}. Assuming the knot $K$ is fibered and has a genus one, such as the trefoil, the surgered manifold $E(2)_K$ has a square zero genus two surface $\Sigma_K$, which is roughly obtained by tubing together $T_1$, $S_1$, and a Seifert surface of $K$. Since $T_2$ is disjoint from $N_1$, we may take a fiber sum of $E(2)_K$ and $E(2s)$ along $T_2$ and a regular fiber of $E(2s)$. Let $M(s)$ denote this fiber sum. Finally, the manifold $M_n(s)$ is a fiber sum of of $n$ copies $W$ along $\Sigma_W$ and a single copy of $M(s)$ along $n$ parallel copies of $\Sigma_K$. They show in \cite{Park_Szabo} that $\pi_1(M_n(S))=1$ and that the invariants are $\sigma(M_n(s))= -16s-16$ and $e(M_n(s))= 24s+4n+24$.
\end{remark}

If we had a symplectic $\Sigma_2\subset M_n(s)$ with a trivial meridian, we could apply the $\Z_2$--construction on $M_n(s)$ to fill a geography with fundamental group of order two. We are not convinced that such a genus two embedded surface exists, so we will modify $M_n(s)$ to get around this. Let $E(2)_{K'}$ be another copy of $E(2)_K$ with $K=K'$ and nuclei $N'_1$, $N'_2$ containing regular fibers $T'_i$ and dual spheres $S'_i$, with the knot surgery done on $T'_1$. Define $M'_n(s)= \Fibersumd{M_n(s)}{E(2)_{K'}}{T_2 = T'_2}$. Same as with $E(2)_K$, we have genus 2 surface in $E(2)_{K'}$ of self-intersection 0 given by resolving the union of $T'_1$ and $S'_1$, and capping with a Seifert surface of $K'$. Call the surface $\Sigma'_2$. Now $M'_n(s) - \Sigma'_2$ contains a disk in $N'_1- \Sigma'_2$ bounding the meridian of $S'_1$, which is the same as a meridian of $\Sigma'_2$. To see this, take another regular fiber in $N'_1$ and cap its generators using the cusp fiber of $N'_1$. This implies that $M'_n(s) - \Sigma'_2$ is simply-connected. Moreover, $M'_n(s)$ is spin and symplectic because $M_n(s)$ and $E(2)_K$ are \cite{GompfNew}. So apply the $\Z_2$--construction to $M'_n(s)$ along $\Sigma'_2$ and call it $U_n(s)$. By Theorem \ref{thm:Z2_spin}, $U_n(s)$ is spin and by Lemma \ref{lem:symplectic_irreducible} it and its universal cover are irreducible.

Now let's compute the invariants. $M_n(s)$ is parametrized by $n,s\geq 0$ and has invariants $\sigma(M_n(s))= -16s-16$ and $e(M_n(s))= 24s+4n+24$.   Then $M'_n(s)$ has $\sigma(M'_n(s)) = -16s-32$ and $e(M'_n(s))= 24s+4n+48$. The intersection form of $U_n(s)$ is $(2s+4)E_8 \oplus (4s+2n+8)H$. So $U_n(s)$ is an irreducible copy of $L_2 \# E(2s+4) \# (2n+1)S^2 \times S^2$.

\subsubsection{$R_{0,2n+2}$ for $n\geq 5$ ($\sigma = 0$ line).} \label{sec:sigma_0}  In \cite[Section 4.2]{baykurhamadaspin}, the following positive factorization is given in $\Mod(\Sigma_g^1)$ for $g\geq 6$. \[
	t_{d_2}t_dt_{d_1}t_{\tilde d}[\psi,t_{d_3}^{-1}\phi_1][1,1] = t_{\delta}^0.
\] The exact curves and maps are not important for our argument, and the reader can consult \cite{baykurhamadaspin} for more details. The important thing for us is that this positive factorization prescribes a genus $g=n+1$ Lefschetz fibration $\pi_n:\mathcal Z_n\to \Sigma_2$ whose total space is spin and admits a square zero section $\Sigma$. Baykur and Hamada have shown that by performing Luttinger surgeries on a link of Lagrangian tori in $\mathcal Z_n$, one obtains a manifold $Z_n$ which is an irreducible copy of $\#_{2n+1} S^2\times S^2$ for all $n\geq 5$. The section $\Sigma$ is disjoint from the link of Lagrangian tori, and so survives as a square zero genus two surface in $Z_n$. We claim that the $\Z_2$--construction of $Z_n$ along $\Sigma$ is an irreducible copy of $R_{0,2n+2}$. Once we show that $Z_n-\Sigma$ is simply--connected, this will follow from the fact that the $\Z_2$--construction preserves the spin structure on $Z_n$ (see Section \ref{sec:Z2_spin}).

\begin{figure}
	\centering
	\resizebox{5in}{!}{
	\begin{tikzpicture}
		
		
		\draw[thick] (8,8) -- (-2,8) -- (-2,-2) -- (8,-2) -- (8,8);
		
		\draw[thick] (-2,5.5) -- (8,5.5);
		
		\draw[thick](3,5.5) -- (3,-2);
		
		\fill[color=BurntOrange!30!white] (-2,1) -- (3,1) -- (3,-2) -- (-2,-2) -- (-2,1) (.5,-.5) node[scale=2, color=BurntOrange]{$\mathcal K$};
		\draw[thick,color=BurntOrange] (-2,1) -- (3,1);
		
		\draw[ultra thick, color =TealBlue] (-2,5.5) -- (3,5.5) -- (3,-2) -- (-2,-2) -- (-2,5.5);
		\draw[color=TealBlue] (-.5,2) node[scale=2] {$A$};
		\draw [->,color=TealBlue] (0,2) .. controls + (.5,0) and +(0,0) .. (1,3);
		\draw [->,color=TealBlue] (-.5,1.5) .. controls + (0,-.5) and +(0,0) .. (0,0);
		
		\draw (3,6.75) node [scale=2] {$B$};
		
		\draw (5.5,1.75) node[scale=2] {$C$};

		
		\draw[thick] (-7,6) -- (-7,7) .. controls +(0,1) and +(-1,0) ..(-6,8) .. controls +(1,0) and + (0,1).. (-5,7) -- (-5,6); 
		
		\draw[thick] (-6,7) circle [radius = .25];
		
		\draw[thick, dashed,color=BrickRed] (-7,6) .. controls +(0,.35) and +(0,.35).. (-5,6);
		\draw[thick,color=BrickRed] (-7,6) .. controls +(0,-.35) and +(0,-.35).. (-5,6);
		\fill[color=BrickRed] (-5.3,5.8) circle [radius=2pt];
		
		\draw (-6,6) node[color=BrickRed] {$p$};
		\draw[color=BrickRed] (-5.85,6) -- (-5.5,5.9);
		\draw[color=BrickRed] (-5.9,5.9) -- (-5.45,4.9);
		
		
		\draw[thick] (-7,5) -- (-7,-1) .. controls +(0,-1) and +(-1,0) .. (-6,-2) .. controls +(1,0) and +(0,-1) .. (-5,-1) -- (-5,5) ;
		
		\draw[thick,color=BrickRed] (-7,5) .. controls +(0,.35) and +(0,.35).. (-5,5);
		\draw[thick,color=BrickRed] (-7,5) .. controls +(0,-.35) and +(0,-.35).. (-5,5);
		\fill[color=BrickRed] (-5.3,4.8) circle [radius=2pt];
		
		\draw[thick] (-6,-1) circle [radius = .25];
		\draw[thick] (-6,.5) circle [radius = .25];
		\draw[thick] (-6,3.5) circle [radius = .25];
		
		\draw(-6, 2.25) node[scale=2.5] {$\vdots$};
		
		
		\begin{scope}[yshift=1cm]
		
		\draw[thick] (8,-5) .. controls + (0,1) and + (1,0) ..  (7,-4) -- (-1,-4) .. controls  + (-1,0) and + (0,1) ..  (-2,-5) .. controls +(0,-1) and +(-1,0) ..  (-1,-6) -- (7,-6) .. controls + (1,0) and +(0,-1) .. (8,-5);
		
		\draw[thick] (5.5,-5) ellipse (.5 and .3);
		
		\draw[thick] (.5,-5) ellipse (.5 and .3);
		
		\draw[thick,color=ForestGreen] (3,-4) .. controls + (.35,0) and +(.35,0) .. (3,-6);
		
		\draw[thick,color=ForestGreen,dashed] (3,-4) .. controls + (-.35,0) and +(-.35,0) .. (3,-6);
		
		\fill[color=ForestGreen] (3.25,-5) circle [radius = 3pt] (3.35,-5) node [anchor = west] {$q$};
		
		\end{scope}
		
		
		\draw [decorate, decoration = {brace, mirror, amplitude=10pt}, thick] (-2.5,5.4) --  (-2.5,1.2) (-2.8,3.25) node[anchor = east,scale=1.2] {$\Sigma_{g-6}^2$};
		
		\draw [decorate, decoration = {brace, mirror, amplitude=10pt}, thick] (-2.5,.8) -- (-2.5,-2)(-2.8,-.6) node[anchor = east,scale=1.2] {$\Sigma_{5}^1$};

		\draw [decorate, decoration = {brace, amplitude=10pt}, thick] (8.5,5.4) --  (8.5,-2) (8.8,1.65) node[anchor = west,scale=1.2] {$\Sigma_{g-1}^1$};
		
		\draw [decorate, decoration = {brace, mirror, amplitude=10pt}, thick] (8.5,5.6) --  (8.5,7.9) (8.8,6.75) node[anchor = west,scale=1.2] {$\Sigma_{1}^1$};

	\end{tikzpicture}
	
	}
	\caption{}
	\label{fig:A_B_C}	
\end{figure}

To see that $\pi_1(Z_n-\Sigma)=1$, we adopt the decomposition of $\mathcal Z_n$ from \cite{baykurhamadaspin}. We can decompose $\mathcal Z_n$ as \begin{equation}\label{eq:Zn_decomp}
	\mathcal Z_n = (\mathcal K \cup (\Sigma_{g-6}^2\times \Sigma_1^1))\cup (\Sigma_1^1\times \Sigma_2) \cup (\Sigma_{g-1}^1\times \Sigma_1^1),
\end{equation} where $\mathcal K$ contains all the Lefschetz singularities, and outside of $\mathcal K$, the map $\pi_n$ is given by projection onto the second factor. Let $A$, $B$, and $C$ denote the first, second, and third pieces of this decomposition. We can assume the section $\Sigma$ lies in $\{p\}\times \Sigma_2\subset \partial(\Sigma_1^1)\times \Sigma_2\subset B$. We fix a point $q\subset \Sigma_1^1\subset \Sigma_2$, so that $(p,q)$ is a base point for our $\pi_1$ computations. Figure \ref{fig:A_B_C} shows a schematic picture of this decomposition for $\mathcal Z_n$.

Let $\{a_1,b_1,\cdots a_g,b_g\}$ be the standard generators of $\pi_1(\Sigma_g)$ and let $\{x_1,y_1,x_2,y_2\}$ generate $\pi_1(\Sigma_2)$. Without loss of generality, we will consider these curves as $\pi_1$ generators for subsurfaces such as $\Sigma_1^1\subset \Sigma_2$. By \cite[Proposition 2b]{baykurhamadaspin}, there's a sequence of Luttinger surgeries on $C$ such that the resulting manifold $C'$ has a fundamental group normally generated by two disjoint, pairwise non-separating curves $a,b\subset \Sigma_{g-1}^1\times \{q\}$. Similarly, by \cite[Proposition 2a]{baykurhamadaspin}, there is a sequence of Luttinger surgeries on $B$ such that the resulting manifold $B'$ has a fundamental group normally generated by the curves $x_2,y_2\subset \{p\}\times \Sigma_2$. Regarding $\Sigma$ as a subsurface of $B'$ which survives the Luttinger surgeries, $\pi_1(B'-\Sigma)$ is normally generated by $x_2, y_2$, and $\mu_{\Sigma}$, where the latter is a word in $a_g, b_g,$ and $\partial(\Sigma_1^1)\times \{q\}$. We claim that $\pi_1(B'-\Sigma)$ is normally generated by $\{x_2,y_2,\partial(\Sigma_1^1)\times \{q\}\}$, i.e. that $a_g$ and $b_g$ are not needed for normal generation after removing $\Sigma$. To see this, recall that since $B'$ is normally generated by $x_2$ and $y_2$, there exist immersed annuli $H_{a_g},H_{b_g}\subset B'$ which are homotopies from the loops $a_g$ and $b_g$ to conjugates of concatenations of $x_2$ and $y_2$. Removing $\Sigma$ from $B'$ is equivalent to adding instead the product of a $1$--handle and a copy of $\Sigma_2$ to $B'$ along its boundary, so we may regard $B'$ as a subspace of $B'-\Sigma$. Hence $H_{a_g}$ and $H_{b_g}$ survive in $B'-\Sigma$ as well, and so the claim holds.

When we glue $B'-\Sigma$ and $C'$ together along $\partial(\Sigma_1^1)\times \Sigma_1^1$, the generators corresponding to $x_2,y_2\subset B'$ are identified with curves in $\{p\}\times \Sigma_1^1$, and so may be (normally) expressed in terms of $a$ and $b$. The boundary curve $\partial (\Sigma_1^1)\times \{q\}$ is identified with $[a_1,b_1]\cdots [a_{g-1},b_{g-1}]\subset \Sigma_{g-1}^1\times \{q\}$, and so is also, up to conjugation, a word in $a$ and $b$. Hence, $\pi_1((B'-\Sigma)\cup C')$ is normally generated by $\{a,b\}$, just as $B'\cup C'$ is. Therefore the rest of the Seifert-van Kampen arguments from \cite{baykurhamadaspin} used to show that $\pi_1(Z_n)=1$ will also apply for $\pi_1(Z_n-\Sigma)$.

\subsubsection{$R_{2m,2n+3}$ for $m\geq 1$, $n\geq 5$.} \label{sec:sigma_0_generalization}

Before we start, the following lemma will be of use:

\begin{lemma} \label{lemma: normal_generation}
	Suppose $\pi_1(X)$ is generated by $g_i$ for $1\leq i \leq n$. Let $T_j$  for $1 \leq j \leq k$ be a sequence of disjoint tori in $X$ and $T'_j$ be dual tori, intersecting $T_j$ in a single point, so that  $\pi_1(T'_j)$ can be generated by curves representing $g_i$, up to conjugation by paths connecting the basepoint of $X$ and the basepoint of $T'_j$. Also assume that after removing $T:= \sqcup_j T_j$ from $X$, each $T'_j$ gets punctured only once. Then $\pi_1(X-T)$ is also normally generated by $g_i$.
\end{lemma}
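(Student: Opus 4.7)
The plan is to reduce $\pi_1(X-T)$ to $\pi_1(X)$ by van Kampen, and then show that the extra generators introduced by removing $T$ (namely the meridians) already lie in the normal closure of the $g_i$'s.

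First I would apply Seifert--van Kampen to the decomposition $X = (X-T)\cup \nu(T)$, where $\nu(T) = \bigsqcup_j T_j\times D^2$ and the intersection is $\bigsqcup_j T_j\times S^1$. Since $\pi_1(\nu T_j)$ is generated by pushoffs of the two standard curves on $T_j$ (which are images of curves already in $X-T$), the only new relation introduced by gluing in $\nu(T)$ is the killing of each meridian $\mu_{T_j}$. Hence
\[
\pi_1(X) \;\cong\; \pi_1(X-T)\big/\langle\!\langle \mu_{T_1},\ldots,\mu_{T_k}\rangle\!\rangle .
\]
After choosing representatives of the $g_i$'s as loops in $X$ disjoint from $T$ (generic loops miss a $2$--complex in a $4$--manifold), it follows that $\pi_1(X-T)$ is generated as a group by the $g_i$'s together with the meridians $\mu_{T_j}$. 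So the lemma reduces to showing that each $\mu_{T_j}$ lies in the normal closure $N$ of $\{g_1,\ldots,g_n\}$ in $\pi_1(X-T)$.

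For this, I would use the dual torus $T'_j$. Since $T'_j$ meets $T$ transversely in exactly one point (by the once-puncture hypothesis), removing an open disk neighborhood of this point from $T'_j$ yields an embedded once-punctured torus $P_j \subset X-T$ whose boundary loop is precisely $\mu_{T_j}$. Choose a path $\gamma_j$ in $X-T$ from the basepoint of $X$ to the basepoint of $T'_j$; based at this latter point, $\pi_1(P_j)$ is free on two generators $a_j, b_j$ which represent the two standard generators of $\pi_1(T'_j)$, and the boundary is $\partial P_j = [a_j,b_j]$. Under inclusion, we therefore obtain
\[
\mu_{T_j} \;=\; \gamma_j\,[a_j,b_j]\,\gamma_j^{-1} \quad \text{in } \pi_1(X-T).
\]
By hypothesis, each $a_j$ and $b_j$ is conjugate (via $\gamma_j$) to a word in the $g_i$'s. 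Hence $[a_j,b_j]$ lies in $N$, and so its conjugate $\mu_{T_j}$ does as well. Combining the two steps, $\pi_1(X-T)$ is normally generated by the $g_i$'s.

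The only subtlety I anticipate is basepoint bookkeeping: the dual torus $T'_j$ naturally has its own basepoint, and the identification of $\pi_1(T'_j)$ with subgroups of $\pi_1(X)$ (and $\pi_1(X-T)$) is only well-defined up to conjugation by a connecting path $\gamma_j$. But since normal closures are by definition conjugation-invariant, this conjugation issue dissolves — which is exactly why the hypothesis is phrased "up to conjugation by paths connecting the basepoints." Everything else is a direct van Kampen computation, so no further obstacle arises.
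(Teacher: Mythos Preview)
Your argument is correct and follows essentially the same route as the paper: both proofs note that $\pi_1(X-T)$ is normally generated by the $g_i$ together with the meridians $\mu_{T_j}$, then use the once-punctured dual torus $T'_j$ to realize each meridian as a commutator $[a_j,b_j]$ of the torus generators, which lies in the normal closure of the $g_i$ by hypothesis.

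One small slip: from $\pi_1(X)\cong \pi_1(X-T)/\langle\!\langle \mu_{T_j}\rangle\!\rangle$ you conclude that $\pi_1(X-T)$ is \emph{generated as a group} by the $g_i$ and the $\mu_{T_j}$. That does not follow in general (e.g.\ $G=S_3$, $m=(12)$: the normal closure of $m$ is all of $G$, so $G/\langle\!\langle m\rangle\!\rangle=1$, yet $\langle m\rangle\neq G$). What you do get---and what the paper states---is that $\pi_1(X-T)$ is \emph{normally} generated by the $g_i$ and the $\mu_{T_j}$. Since the rest of your argument only uses normal generation, the proof goes through unchanged with this correction.
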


\begin{proof}
	First, note that $\pi_1(X-T)$ is normally generated by $g_i$ and meridians $\mu_j$ (conjugated by the path connecting basepoints) of $T_j$. On the other hand, if $\pi_1(T'_j)$ is generated by $x,y$, then $[x,y]$ is a meridian of $T_j$. So by quotienting $\pi_1(X-T)$ by $g_i$ we also quotient all their commutators and all conjugates of these commutators. So $\mu_j$ will be trivial in $\pi_1(X-T)/N(g_1,\dots, g_n)$ and so $\pi_1(X-T)/N(g_1,\dots, g_n)= \pi_1(X-T)/N(g_1,\dots, g_n,\mu_1,\dots, \mu_k ) = 1$.

\end{proof}

We demonstrate how modify the $\sigma = 0$ construction by fiber-summing with a copy of $E(2n)$, which gives a large region of the geography plane. Recall from Section \ref{sec:sigma_0} that in \cite[Section 4.2]{baykurhamadaspin}, for $n\geq 5$ they construct a spin 4--manifold $\mathcal Z_n$ which admits a genus $g=n+1$ Lefschetz fibration $\pi_n$ over $\Sigma_2$ with a square zero section $\Sigma$. A sequence of Luttinger surgeries on $\mathcal Z_n$ gives an irreducible copy of $\#(2n+1)S^2\times S^2$. The fundamental group of $\mathcal Z_n$ is generated by the standard generators of $\pi_1(\Sigma_2)$ and $\pi_1(\Sigma_g)$, which we denote by $\{x_1,y_1,x_2,y_2\}$ and $\{a_1,b_1,\dots, a_g,b_g\}$, respectively. Recall the decomposition of $\mathcal Z_n$ provided in (\ref{eq:Zn_decomp}), where $A$, $B$, and $C$ denote the first, second, and third blocks in the decomposition. In what follows, we will often be looking at codimension $0$ submanifolds of $\mathcal Z_n$ which are diffeomorphic to $U\times V$, where $U$ is a subsurface of $\Sigma_g$ and $V$ is a subsurface of $\Sigma_2$, where $\pi_n$ is given by projection onto $V$. We will also be looking at codimension $1$ submanifolds of the form $\partial U\times V$ and $U \times \partial V$. We will also often be decomposing the base and fiber as $\Sigma_2 = \Sigma_1^1\cup \Sigma_1^1$ and $\Sigma_g = \Sigma_{g-1}^1\cup \Sigma_1^1$, respectively. Note that unlike $B$ and $C$, $A$ is not a product space, as it contains all the Lefschetz singularities of $\pi_n$. That said, its boundary is a product space given by $(\partial \Sigma_{g-1}^1)\times \Sigma_1^1\cup_{(\partial\Sigma_{g-1}^1) \times \partial\Sigma_1^1} \Sigma_{g-1}^1 \times \partial\Sigma_1^1$. With all this established, we provide the gluing regions between $A$, $B$, and $C$.\begin{itemize}
	\item $C$ and $B$ are glued along $\partial \Sigma_{g-1}^1\times \Sigma_1^1\subset \partial C$ and $\partial \Sigma_1^1 \times \Sigma_1^1 \subset \partial B$.
	
	\item $C$ and $A$ are glued along $\Sigma_{g-1}^1\times \partial \Sigma_1^1$.
	
	\item $A$ and $B$ are glued along $(\partial\Sigma_{g-1}^1)\times \Sigma_1^1\subset \partial A$ and $(\partial\Sigma_1^1)\times \Sigma_1^1\subset \partial B$.
\end{itemize} We pick a point $p$ in $\partial\Sigma_{g-1}^1\subset \Sigma_g$ and $q\in \partial\Sigma_1^1\subset \Sigma_2$, so that $(p,q)$ may be a basepoint for $\pi_1$ computations of all three blocks $A$, $B$, and $C$. Figure \ref{fig:A_B_C} shows a labeled schematic picture of this decomposition.

\begin{figure}
	\centering
	\resizebox{!}{1.75in}{
	\begin{tikzpicture}
		
		\begin{scope}[scale=.8]
			
			

			\draw[thick, color=blue, name path = b] (0,0) circle [radius = 1.25cm];
			\draw [->, color = blue] (1.8,-1.8) -- (1,-1); 
			\draw[thick, color = blue] (1.7,-1.7) node[anchor=north west, scale = .8]{$a''_g$};
			
			\draw[thick, color=blue, name path = b] (0,0) circle [radius = 2cm] (-2,0) node[anchor=east] {$a'_g$};
			
			\draw[thick, color = blue, name path = a] (0,.75) .. controls +(-.25,0) and +(-.25,0) .. (0,3); 
			\draw[thick, color = blue, dashed] (0,.75) .. controls +(.25,0) and +(.25,0) .. (0,3) (0,3) node[anchor= south] {$b'_g$};

			\draw[very thick] (3,3) -- (0,3) .. controls +(-2,0) and + (0,2) .. (-3,0).. controls +(0,-2) and +(-2,0) .. (0,-3) -- (3,-3);
			\draw[very thick] (3,0) ellipse (.65 and 3);
			\draw[very thick] (0,0) circle [radius = .75cm];
			
		\end{scope}

		\begin{scope}[xshift=9cm, scale = 1.1]
			
			\draw[thick] (-4.25,0) node[scale = 2] {$\times$};
			
			\draw[thick, color=blue, name path = b1] (-1.25,0) circle [radius = 1cm](-2.2,0) node[anchor=east]{$x'_1$};
			\draw[thick, color=blue, name path = b2] (1.25,0) circle [radius = 1cm] (2.2,0) node[anchor=west] {$x'_2$};
			
			\draw[thick,color=blue, name path = a1] (-1.25,.5) .. controls +(.25,0) and +(.25,0) .. (-1.25,2)(-1.25,2) node [anchor = south] {$y'_1$};
			\draw[thick,color=blue, dashed] (-1.25,.5) .. controls +(-.25,0) and +(-.25,0) .. (-1.25,2);

			\draw[thick, color=blue] (1.25,0) circle [radius = .7cm] (.75,0);
			\draw [->, color = blue] (2,-1) -- (1.7,-.6); 
			\draw[thick, color = blue] (1.8,-.95) node[anchor=north west, scale = .8]{$x''_2$};
			
			\draw[thick, color=blue] (-1.25,0) circle [radius = .7cm] (.75,0);
			\draw [->, color = blue] (-2,-1) -- (-1.7,-.6); 
			\draw[thick, color = blue] (-1.8,-.95) node[anchor=north east, scale = .8]{$x''_1$};
			
			\draw[very thick] (1,2) -- (-1,2) .. controls +(-1,0) and + (0,1) .. (-3,0).. controls +(0,-1) and +(-1,0) .. (-1,-2) -- (1,-2) .. controls +(1,0) and +(0,-1) .. (3,0) .. controls +(0,1) and +(1,0) .. (1,2);
			
			\draw[very thick] (-1.25,0) circle[radius = .5cm];
			\draw[very thick] (1.25	,0) circle[radius = .5cm];
			
		\end{scope}
		
	\end{tikzpicture}	
	}
	\caption{}
	\label{fig:B_lag_tori}	
\end{figure}

Next, we perform Luttinger surgeries on $B$, in a similar style to \cite[Proposition 2(a)]{baykurhamadaspin}. $\pi_1(B,(p,q))$ is generated by $a_g,b_g$, which come from generators of $\pi_1(\Sigma_g)$, and $x_1,y_1,x_2,y_2$, which come from generators of $\Sigma_2$. For any $\pi_1$ generator $c$, let $c'$ and $c''$ denote push-offs of curves freely homotopic to $c$, which are pairwise disjoint. Define four disjoint Lagrangian tori in $B$ as follows:\[
	H_1 = a'_g\times x_2', H_2 = b'_g\times x_2'', H_3 = a'_g\times x_1', H_4 = a''_g\times y_1'.
\] Figure \ref{fig:B_lag_tori} shows the relevant curves on $\Sigma_1^1\subset \Sigma_g$ and $\Sigma_2$. We perform three Luttinger surgeries:\[
	(H_1,a'_g,1), (H_2, b_g',1), (H_3, x_1',1)
\] and denote the resulting manifold $B'$. Lemma \ref{lemma: normal_generation} shows that $\pi_1(B')$ is normally generated by the generators of $\pi_1(B)$. Moreover, the following relations hold:\begin{equation}\label{eq:L_surg_relations}
	\{b_g,y_2\}a_g = \{a_g,y_2\}b_g = \{b_g,y_1\}x_1 =1,
\end{equation} where $\{c,d\}$ denotes a commutator of conjugates of $c$ and $d$. The fourth torus $H_4$ survives the Luttinger surgeries, and has a geometric dual $b_g'\times x_1''$, which also survives the surgeries. Hence $H_4$ is a homologically essential Lagrangian torus in $B'$. With this established, we take a symplectic fiber sum along $H_4$. Set $B_m = B' \#_{H_4=F} E(2m)$, where $F$ is a regular fiber of $E(2m)$ and we choose a gluing diffeomorphism so that the resulting manifold is spin \cite[Proposition 1.2]{GompfNew}. A simple exercise in Seifert-Van Kampen shows that $\pi_1(B_m)$ has the same normal generating set as $\pi_1(B')$ with the added relations $a_g=y_1=1$. But if $a_g=y_1=1$, the torus surgery relations in (\ref{eq:L_surg_relations}) then imply $x_1=b_g=1$ as well. Therefore, $\pi_1(B_m)$ is normally generated by $x_2$ and $y_2$.

Let $z\in \Sigma_1^1$ be a point disjoint from $\partial\Sigma_1^1$ and all the push-off curves homotopic to $a_g$ and $b_g$ which are used to construct the tori $H_i$, as shown in Figure \ref{fig:B_lag_tori}. Then $\Sigma:= \{z\}\times \Sigma_2\subset B$ may be regarded as the square zero section of $\pi_n$. Since it's disjoint from all the Luttinger surgeries and where the fiber sum is taken, it survives as a square zero embedded surface in $B_m$. So $\pi_1(B_m-\Sigma)$ is normally generated by $x_2, y_2,$ and $\mu_\Sigma$. By the same reasoning as before, we can further refine the normal generating set of $\pi_1(B_m-\Sigma)$ to $\{x_2,y_2,\partial(\Sigma_1^1)\times \{q\}\}$.

Next we perform Luttinger surgery on $C$, whose fundamental group is generated by 
$\{a_1,b_1,\ldots, a_{g-1},b_{g-1}\}$ and $\{x_2,y_2\}$. In this case, we can use the exact same argument as in \cite[Section 4.2]{baykurhamadaspin}, where they show that $C$ may be Luttinger-surgered into a manifold $C'$ whose fundamental group is normally generated by $a$ and $b$, where $a$ and $b$ are disjoint homologically independent curves of $\Sigma_{g-1}^1\times \{q\}$. We glue $C'$ to $B_m-\Sigma$ along the same gluing region as $B$ and $C$ are glued along in the decomposition of $\mathcal Z_n$. After gluing, the normal generators $x_2, y_2, \partial(\Sigma_1^1)\times \{q\}$ of $\pi_1(B_m - \Sigma)$ are identified with elements of $\pi_1(C')$, and so may be (normally) expressed in terms of $a$ and $b$. Hence, setting $\mathcal D = B_m \cup C'$, we've just shown that $\pi_1(\mathcal D)$ and $\pi_1(\mathcal D - \Sigma)$ are normally generated by $a$ and $b$. Further employing the argument in \cite[Section 4.2]{baykurhamadaspin}, we may glue $\mathcal D$ to $A$ in such a way that the relations coming from the vanishing cycles of the Lefschetz fibration on $A$ kill $a$ and $b$, giving a simply--connected $4$--manifold which we denote $Z_{m,n}$. Note that $\Sigma$ continues to survive as a square zero embedded surface in $Z_{m,n}$, whose meridian is trivial in the complement, now that $a$ and $b$ are null-homotopic. To summarize our progress so far, $Z_{m,n}$ is obtained by applying Luttinger surgery to the symplectic fiber sum of $\mathcal Z_n \#_{T^2} E(2m)$. From this bird's-eye view, it's easier to see that $Z_{m,n}$ is symplectic and spin; all of the operations we've performed preserve the symplectic structure. The fiber sum gluing map between $E(n)$ and $\mathcal Z_n$ was chosen to preserve the spin structure, and the Luttinger surgeries also preserve the spin structure by Proposition \ref{prop:dual_torus_spin}. The algebraic invariants of $Z_{m,n}$ are obtained from those of $E(2m)$ and $\mathcal Z_n$, the latter of which has the same algebraic invariants as $\#(2n+1)S^2\times S^2$. Hence \[
	e(Z_{m,n}) = 24m+4n+4, \sigma(Z_{m,n}) = -16m.
\] If we perform the $\Z_2$--construction to $Z_{m,n}$ along $\Sigma$, the Euler characteristic will increase by two and the signature will remain the same. It follows that the $\Z_2$--construction of $Z_{m,n}$ along $\Sigma$ is an irreducible copy of $R_{2m,2n+3}$.

\subsubsection{Concluding geography graph for $w_2$ type (ii), $b_2^+$ even.} Figure \ref{fig:plane1} shows the $(a,b)$ points on the geography plane with the irreducibles homeomorphic to $L_2 \# E(a) \# b S^2 \times S^2$. Since $\sigma \leq 0$ we restrict to the first quadrant. Due to being spin, $a$ must be even. In addition, $b_2^+$ even implies that $b$ is odd when $a > 0$ and $b$ is even when $a=0$.

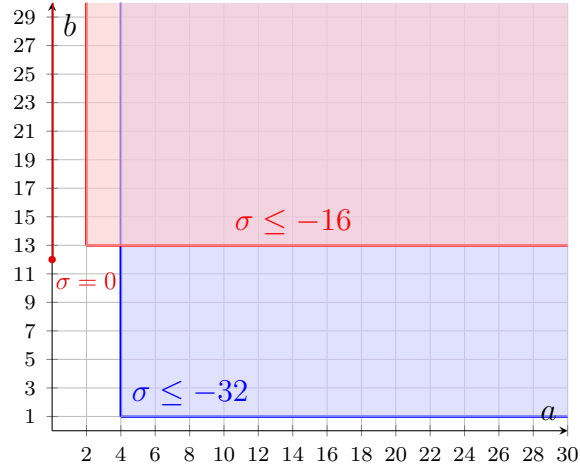
\begin{figure}[h]
	\centering
	\begin{tikzpicture}
		\begin{axis}[
			axis lines = middle,
			xlabel = $a$,
			ylabel = {$b$},
			xmin=0, xmax=30,
			ymin=0, ymax=30,
			xtick = {0,2,...,30},
			ytick = {0,1,3,...,29}, 
			grid=both,
			ticklabel style = {font=\tiny},
			every axis plot/.append style={thick}]

			\draw [blue,  line width=1pt] 
			(axis cs:4,1) -- (axis cs:30,1) 
			(axis cs:4,1) -- (axis cs:4,30);
			\fill[blue!20, opacity=0.6] 
			(axis cs:4,1) rectangle (axis cs:30,30);
			\node[blue, anchor=south west] at (axis cs:4,1) {$\sigma \leq -32$};
			
			\draw [red,  line width=1pt] 
			(axis cs:2,13) -- (axis cs:30,13) 
			(axis cs:2,13) -- (axis cs:2,30);
			\fill[red!20, opacity=0.6] 
			(axis cs:2,13) rectangle (axis cs:30,30);
			\node[red, anchor=south west] at (axis cs:10,13) {$\sigma \leq -16$};
			
			\draw [red!90!black, very thick] (axis cs:0,12) -- (axis cs:0,30); 
			\addplot [only marks, mark=*, mark size=1pt, red!90!black] coordinates {(0,12)};
			\node[red!90!black, anchor=north west, scale = 0.8] at (axis cs:-0.3,11.7) {$\sigma = 0$};

		\end{axis}
	\end{tikzpicture}
	\caption{Even $b_2^+$ and $w_2$-type $(ii)$}
	\label{fig:plane1}
\end{figure}

\subsection{Odd $\mathbf{b_2^+}$}\label{sec:type2_odd}

\subsubsection{$R_{2s+2,2n}$ for $s,n\geq 0$.}

Here, we will fill the geography region by applying $\Z_2$--construction along a torus to the family of manifolds $M_n(s)$ from the simply-connected geography of \cite{Park_Szabo}. If $s=0$, then $M_n(s)$ contains a copy of $E(2)_K - N_1$, where $N_1$ is a nucleus with symplectic $T_1$ and $S_1$. (See Remark \ref{rmk:Mn(s)_construction} for a reminder of this terminology.) Consider the symplectic torus $T_2$ inside its second nucleus and apply $\Z_2$--construction to $M_n(s)$ along $T_2$ to obtain $X_{n,s}$. The double cover of $X_{n,s}$ is simply-connected because the meridian of $T_2$ is trivial, since it bounds a punctured sphere $S_2$ inside the second nucleus $N_2$. If $s\geq 1$, then take a $\Z_2$--construction along a push-off of regular fiber in $E(2s)-F\subset M_n(s)$. Again, the meridian is homotopic to the meridian of $T_2$, which has a dual sphere, and so the resulting $X_{n,s}$ has a fundamental group of order two. The fact that $X_{n,s}$ is spin follows from Theorem \ref{thm:Z2_spin}. Since the Euler characteristic of a torus is zero, the invariants $e$ and $\sigma$ of $X_{n,s}$ and $M_n(s)$ coincide, i.e. $e(X_{n,s}) = 24s+4n+24$ and $\sigma(X_{n,s}) = -16s-16$. So $X_{n,s}$ is homeomorphic to $L_2 \# E(2s+2) \# 2n S^2 \times S^2$.

Consider the Lefschetz fibration $\pi\colon E(2m)\to S^2$ for $m\geq 1$, and let $F$ be a regular torus fiber. The following fact will often be used in our remaining constructions.

\begin{lemma} \label{lem:Em_lag_T2}
	There exists a homologically essential Lagrangian torus $\Lambda$ in $E(2m)$ which is disjoint from $F$, such that $\pi_1(E(2m)- (F \cup \Lambda)) = 1$.
\end{lemma}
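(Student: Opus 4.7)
The plan is to build $\Lambda$ as a Lagrangian torus inside a Gompf nucleus of $E(2m)$, together with a dual Lagrangian torus $\Lambda^*$, arranging so that the $\pi_1$-generators of $\Lambda^*$ are freely homotopic to vanishing cycles of a cusp fiber disjoint from $F$. First, I would decompose $E(2m)=N\cup_\partial W$, where $N=N(2m)$ is a Gompf nucleus containing a cusp fiber $C$, the chosen regular fiber $F$, and a section sphere $S$ of self-intersection $-2m$. By \cite[Chapter 8]{GompfStip}, $W$ is simply-connected. The two vanishing cycles of $C$ generate $\pi_1(F)$ and bound vanishing disks lying in a neighborhood of $C$ disjoint from $F$; together with the section $S$ killing the meridian $\mu_F$, this shows that $N-F$ is simply-connected. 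Seifert-Van Kampen then yields $\pi_1(E(2m)-F)=1$.

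Next, inside $N$, I would construct $\Lambda$ via a Luttinger-style parallel-transport construction. Pick a small simple loop $\gamma$ in the base $S^2$ near the image of $C$ but avoiding that of $F$, choose a Lagrangian curve $c$ on a regular fiber over a point of $\gamma$, and parallel-transport $c$ along $\gamma$ using the symplectic connection on the fibration. This produces a Lagrangian torus $\Lambda\subset N$ disjoint from $F$, and simultaneously a dual Lagrangian torus $\Lambda^*$ (obtained by parallel-transporting a curve dual to $c$ along an arc dual to $\gamma$) intersecting $\Lambda$ transversely in a single point.

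The intersection $[\Lambda]\cdot[\Lambda^*]=\pm 1$ in $H_2(E(2m);\Z)$ immediately gives homological essentiality of $\Lambda$. For the fundamental group computation, since $E(2m)-F$ is simply-connected, $\pi_1(E(2m)-F-\Lambda)$ is normally generated by the meridian $\mu_\Lambda$. The once-punctured torus $\Lambda^*-\nu\Lambda$ lies in $E(2m)-F-\Lambda$ and has boundary $\mu_\Lambda$, which equals the commutator $[x,y]$ of its two $\pi_1$-generators. Arrange the construction so that $x$ and $y$ are freely homotopic, inside $E(2m)-F-\Lambda$, to the two vanishing cycles $a,b$ of $C$, with vanishing disks for $a,b$ chosen disjoint from $\Lambda$. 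Both $x$ and $y$ are then null-homotopic in $E(2m)-F-\Lambda$, so $\mu_\Lambda=[x,y]=1$ there and $\pi_1(E(2m)-F-\Lambda)=1$.

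The main difficulty lies in the geometric arrangement: choosing $\gamma$, $c$, and the dual data near $C$ so that the two $\pi_1$-generators of $\Lambda^*$ become vanishing cycles of $C$ via homotopies inside $E(2m)-F-\Lambda$, and so that the corresponding vanishing disks miss $\Lambda$. Once this placement is in hand, the Lagrangian condition, the single transverse intersection $\Lambda\cap\Lambda^*$, and the $\pi_1$ verifications are routine.
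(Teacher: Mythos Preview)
Your outline has a genuine gap in the construction of both $\Lambda$ and its dual. A single Gompf nucleus $N(2m)$ fibers over a disk with monodromy $t_at_b$ around the cusp; this mapping class fixes no isotopy class of essential simple closed curve on $T^2$, so if your loop $\gamma$ encircles the cusp, the parallel transport of $c$ does not close up to a torus. If instead $\gamma$ bounds a disk in the base containing no critical values, then $\Lambda=c\times\gamma$ bounds a solid torus $c\times D^2$ and is null-homologous. So inside a single nucleus you cannot produce a homologically essential Lagrangian torus by this mechanism. Relatedly, your dual object $\Lambda^*$ is not well defined: the base is $S^2$, so there is no closed curve ``dual to $\gamma$'', and parallel transport of a fiber curve along an \emph{arc} yields an annulus, not a closed torus. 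To close it up you would need the fiber curve to bound at both endpoints of the arc---which is exactly the vanishing-disk picture and produces a sphere, not a torus.

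The paper's argument sidesteps both issues at once. It takes $\gamma$ to encircle twelve critical values, so the monodromy around $\gamma$ is $(t_at_b)^6=\id$ and $\pi^{-1}(\gamma)\cong S^1\times F'$; then $\Lambda=\gamma\times a$ is an honest Lagrangian torus disjoint from $F$. For the dual, since $b$ occurs as a vanishing cycle on \emph{both} sides of $\gamma$ (this uses $2m\ge 2$), the two Lefschetz thimbles for $b$ glue to a $(-2)$--sphere meeting $\Lambda$ transversely once. This sphere simultaneously certifies that $\Lambda$ is homologically essential and, being simply connected, gives a disk bounding $\mu_\Lambda$ in the complement---so $\pi_1(E(2m)-(F\cup\Lambda))=1$ follows without ever having to kill commutator generators of a dual torus. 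If you want to repair your approach, replace the putative dual torus by this $(-2)$--sphere and choose $\gamma$ to enclose a full $(t_at_b)^6$ block rather than a single cusp.
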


\begin{proof}
	Let $\gamma$ be a curve on the base diagram for $E(2m)$ which is disjoint from $\pi(F)$ and encircles twelve critical values of $\pi$, such that the monodromy associated to $\gamma$ is $(t_at_b)^6$, where $a$ and $b$ are the symplectic generators for $H_1(T^2;\Z)$. Then $\pi^{-1}(\gamma)$ is an embedded copy of $S^1\times F'$, where $F'$ is another regular fiber away from $F$. Set $\Lambda = \gamma \times a \subset \pi^{-1}(\gamma)$, where $a$ is the symplectic $H_1$ generator on $F'$. This is a Lagrangian torus that's disjoint from $F$. To see that $\Lambda$ is homologically essential, observe that $\pi$ restricted to the interior of $\gamma$ is a Lefschetz fibration over $D^2$ with $a$ and $b$ as vanishing cycles. The curve $b$ on $F'\subset \pi^{-1}(\gamma)$ is a vanishing cycle, and so bounds a disk which will intersect $\Lambda$ transversely at a point. The same is true for $\pi$ restricted to the exterior of $\gamma$. The union of these two disks is a $(-2)$--sphere intersecting $\Lambda$ transversely at a point, showing that it is homologically essential and also that its complement is simply--connected. Moreover, this $(-2)$--sphere is disjoint from $F$ and the $(-2m)$--section of $\pi$ \cite[Lemma 3.1.10]{GompfStip}. Therefore $E(2m)-(F \cup \Lambda)$ is also simply--connected.
\end{proof}

\subsubsection{$R_{2m,2n+2}$ for $n\geq 5$, $m\geq 1$.} Recall the manifold $Z_{m,n}$ from Section \ref{sec:sigma_0_generalization}, which is obtained by applying Luttinger surgery to embedded tori in $\mathcal Z_n \#_{T^2} E(2m)$. The fiber sum is taken along a regular fiber $F$ in $E(2m)$. First note that for all $m\geq 1$, there exists a homologically essential Lagrangian torus $\Lambda\subset E(2m)$ which is disjoint from $F$. This torus is obtained exactly as in Lemma \ref{lem:Em_lag_T2}, by taking the parallel transport of an $H_1$ generator of $F$ along a curve on the base $S^2$ which bounds the sub-factorization $(t_at_b)^6$. Moreover $\Lambda$ has a simply--connected complement in $E(2m)$, since it has a geometrically dual $(-2)$--sphere formed from the two cores of the $2$--handles coming from the vanishing cycles of $E(2m)$. This $(-2)$--sphere is disjoint from $F$, and so $Z_{m,n}-\Lambda$ remains simply--connected. Hence, we may apply the $\Z_2$--construction to $Z_{m,n}$ along $\Lambda$. The resulting manifold will be irreducible and spin, with the same Euler characteristic and signature as $Z_{m,n}$. This gives an irreducible copy of $L_2\# E(2m) \# (2n+2) S^2\times S^2$ for $m\geq 1$, $n\geq 5$.

\subsubsection{$R_{0,2m+1}$ for $m\geq 6$.} To realize coordinates on the $\sigma = 0$ line, we use the manifold $Z_m$ constructed in \cite{baykurhamadaspin}, which is an irreducible copy of $\#_{2m+1} S^2\times S^2$ for $m\geq 5$. As described in \cite[Addendum 11]{baykurhamadaspin}, when $m\geq 6$, the manifold $Z_m$ has a homologically essential Lagrangian torus $T$ with a simply--connected complement. If we apply the $\Z_2$--construction to $Z_m$ along $T$, we obtain a spin manifold with order two $\pi_1$ and the same algebraic invariants as $Z_m$. Hence, we realize an irreducible copy of $L_2\#(2m+1) S^2\times S^2$ for $m\geq 6$.

\subsubsection{Concluding geography graph for $w_2$ type (ii), $b_2^+$ odd.} Figure \ref{fig:plane2} shows the $(a,b)$ points on the geography plane with the irreducibles homeomorphic to $L_2 \# E(a) \# b S^2 \times S^2$. Due to being spin, $a$ must be even. In addition, $b_2^+$ odd implies that $b$ is even when $a > 0$ and $b$ is odd when $a=0$.

\begin{figure}[h]
	\centering
	\begin{tikzpicture}
		\begin{axis}[
			axis lines = middle,
			xlabel = $a$,
			ylabel = {$b$},
			xmin=0, xmax=30,
			ymin=0, ymax=30,
			xtick = {0,2,...,30},
			ytick = {0,1,3,...,29}, 
			grid=both,
			ticklabel style = {font=\tiny},
			every axis plot/.append style={thick}]
			+
			
			\draw [red,  line width=1pt] 
			(axis cs:2,0) -- (axis cs:30,0) 
			(axis cs:2,0) -- (axis cs:2,30);
			\fill[red!20, opacity=0.6] 
			(axis cs:2,0) rectangle (axis cs:30,30);
			\node[red, anchor=south west] at (axis cs:10,13) {$\sigma \leq -16$};
			
			\draw [red!90!black, very thick] (axis cs:0,13) -- (axis cs:0,30); 
			\addplot [only marks, mark=*, mark size=1pt, red!90!black] coordinates {(0,13)};
			\node[red!90!black, anchor=north west, scale = 0.8] at (axis cs:-0.3,13.3) {$\sigma = 0$};

		\end{axis}
	\end{tikzpicture}
	\caption{Odd $b_2^+$ and $w_2$-type $(ii)$}
	\label{fig:plane2}
\end{figure}
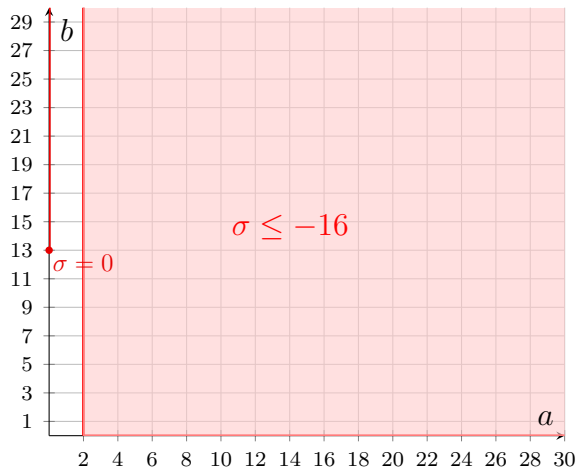

\section{$\Z_2$--geography for $w_2$--type (iii).}\label{sec:type3}

This section aims to construct irreducible $4$--manifolds with order two $\pi_1$ and $w_2$--type $(iii)$, meaning that they're non-spin but have spin universal covers. As discussed in Section \ref{sec:history}, we will populate the geography plane with manifolds having intersection forms isomorphic to $a(-E_8)\oplus bH$, where $b\geq 2a-1$. Whereas manifolds with $w_2$--type (ii) have signature divisible by 16, manifolds with $w_2$--type $(iii)$ have signature divisible by eight. So in this section, $a$ can be even or odd. In what follows, we will have separate constructions for when $\sigma\equiv 8\bmod 16$ and when $\sigma \equiv 0\bmod 16$. When $\sigma\equiv 8\bmod 16$, we can use Rokhlin's theorem to argue that the manifold is non-spin. On the other hand, detecting non-spinness of manifolds with $\sigma \equiv 0\bmod 16$ is more delicate. Our strategy in this case is to find an embedded non-orientable surface of odd self-intersection. By Proposition \ref{prop:w_2_nonorientable_surface}, such an embedding obstructs the spinness. We will also, as done previously, divide our work into constructions for $b_2^+$ even and $b_2^+$ odd. We will use the coordinate system given by the manifold's intersection form. That is, we will populate the $(a,b)$ plane where $Q_X = a(-E_8)\oplus bH$, and $b\geq 2a-1$.

\subsection{Even $\mathbf{b_2^+}$}\label{sec:type3_even} When populating the region of the geography plane with $b_2^+$ even, we will construct manifolds whose intersection forms are $a(-E_8)\oplus bH$, where $b$ is even.

\subsubsection{$(2s+5)(-E_8)\oplus (4s+2n+10) H$ for $n,s\geq 0$.}\label{sec:b_even_sigma_8}

\begin{figure}
	\centering
	\resizebox{!}{1.5in}{
	\begin{tikzpicture}

		\begin{scope}[]
			
			\begin{scope}[xshift = -.5cm]
				\draw[thick, color=blue, name path = b1] (-1.25,0) circle [radius = 1cm](-1.25,-1.1) node[anchor=north]{$c_2=y_1$};
				\draw[thick,color=blue, name path = a1] (-1.25,.5) .. controls +(.25,0) and +(.25,0) .. (-1.25,2)(-1.25,2) node [anchor = south] {$c_1=x_1$};
				\draw[thick,color=blue, dashed] (-1.25,.5) .. controls +(-.25,0) and +(-.25,0) .. (-1.25,2);
				
			\end{scope}
			
			\begin{scope}[xshift = .5cm]
				
				\draw[thick, color=blue, name path = b2] (1.25,0) circle [radius = 1cm] (1.25,-1.1) node[anchor=north] {$c_4=y_2$};
				
				\draw[thick,color=blue, name path = a1] (1.25,.5) .. controls +(.25,0) and +(.25,0) .. (1.25,2)(1.25,2) node [anchor = south] {$c_5=x_2$};
				\draw[thick,color=blue, dashed] (1.25,.5) .. controls +(-.25,0) and +(-.25,0) .. (1.25,2);
				
			\end{scope}
			
			\draw[thick,color=blue] (-1.25,0) .. controls +(0,-.5) and + (0,-.5) .. (1.25,0) (0,-.75) node {$c_3$};
			\draw[thick,color=blue,dashed] (-1.25,0) .. controls +(0,.5) and + (0,.5) .. (1.25,0);
			
			\filldraw[fill = pink!60!white,draw=red,thick] (3.5,0) ellipse (.25 and .5) (3.5,0) node[color=red] {$\delta$};

			\draw[very thick] (2,2) -- (-2,2) .. controls +(-1,0) and + (0,1) .. (-4,0).. controls +(0,-1) and +(-1,0) .. (-2,-2) -- (2,-2) .. controls +(1,0) and +(0,-1) .. (4,0) .. controls +(0,1) and +(1,0) .. (2,2);
			
			\draw[very thick] (-1.75,0) circle[radius = .5cm];
			\draw[very thick] (1.75	,0) circle[radius = .5cm];
			
		\end{scope}
		
	\end{tikzpicture}
	
	}
	\caption{}
	\label{fig:sigma2_type3}	
\end{figure}

Consider the maximal chain formed by $5$ simple closed curves $c_i$ on $\Sigma_2$. If we remove a disk at the  ``right end'' of $\Sigma_2$, we will have the following well-known mapping class relation:\[
	(t_{c_1}t_{c_2}t_{c_3}t_{c_4})^{10} = t_\delta,
\] where $\delta$ is the boundary component. Figure \ref{fig:sigma2_type3} shows this system of curves. This positive factorization corresponds to a closed genus two Lefschetz fibration $\pi:X\to S^2$ admitting a square $(-1)$--section. Let $F$ be a regular fiber of $\pi$. We claim that $X-\nu F$ admits a spin structure. To show this, we appeal to the correspondence between spin Lefschetz fibrations over disks and quadratic forms on the regular fiber summarized in Section \ref{sec:symp_background}. Let $\{x_1,y_1,x_2,y_2\}$ be the symplectic generators of $H_1(\Sigma_2;\Z)$, which we will also regard as $\pi_1$ generators by a slight abuse of notation. The quadratic form $q:H_1(\Sigma_2;\Z_2)\to \Z_2$ which sends $x_1,y_1,y_2$ to one and $x_2$ to zero evaluates to one on each of the vanishing cycles in the positive factorization for $\pi$. Hence, the criteria are met for the Lefschetz fibration $X-\nu F\to D^2$ to admit a spin structure.

Next, we compute some algebraic invariants for $X-\nu F$. We first show that $\pi_1(X-\nu F)=1$. The positive factorization for $\pi$ immediately ensures that $x_1=y_1=y_2=1\in \pi_1(X-\nu F)$, so $\pi_1(X-\nu F)$ is cyclic, and hence isomorphic to $H_1(X-\nu F;\Z)$, which is generated by $x_2$. Then since $x_2$ is represented by $c_5$, which co-bounds a sub-surface on $\Sigma_2$ with $c_1$ and $c_3$, we have that $[c_1]+[c_3]+[c_5]=0\in H_1(X-\nu F;\Z)$. Since the vanishing cycles in $\pi$ make $c_1$ and $c_3$ null-homologous, it follows that $c_5$ is also null-homologous, completing the proof that $\pi_1(X-\nu F)=1$. We compute the signature of $X$ using Endo's formula \cite{endo2000hyperelliptic}, which is applicable in this case because all genus two Lefschetz fibrations are hyperelliptic. The positive factorization for $X$ has $40$ non-separating vanishing cycles and 0 separating, and so $\sigma(X) = -\frac{1}{5}(3\cdot 40) = -24$. Lastly, the Euler characteristic for $X$ is $e(\Sigma_2)\cdot e(S^2)+40= 36$.

We proceed to perform the $\Z_2$--construction of $X$ along $F$. To this end, we take the double: $\tilde Z:= (X-\nu F)\cup_{a\times r} (X-\nu F)$, where $a\times r \in \Diff(S^1\times \Sigma_2)$ is the gluing map from Proposition \ref{prop:spin_preserving_map}, which preserves spin structures. Note that $\tilde Z$ is a fiber--reversing double of Lefschetz fibrations, where the gluing map reverses orientation of the fiber rather than the $S^1$ factor. See \cite[Section 3]{ArabadjiMorgan} for more details on this construction. The manifolds $\tilde Z$ admits a Lefschetz fibration with a section of square $-2$, which is obtained by gluing two copies of the $(-1)$--section of $X$ together in the fiber sum. Since the positive factorization of the fiber sum supports a spin structure, it follows that the total space $\tilde Z$ is spin. Note that is still applicable because $\tilde{Z}$ is a chiral Lefschetz fibration by \cite[Section 3]{ArabadjiMorgan}. 

Let $Z$ be the $\Z_2$--construction of $X$ along $F$. Then $Z$ has invariants $e(Z)=38$ and $\sigma(Z)=-24$. Its fundamental group is $\Z_2$, with $\tilde Z$ as its universal cover. Since its signature is not divisible by 16, $Z$ can't be spin. But we just saw that its universal cover $\tilde Z$ is spin, so $Z$ must have $w_2$--type $(iii)$. Hence, $Z$ is an irreducible manifold with intersection form $Q_Z = 3(-E_8) \oplus 6H$.

Next we will use symplectic fiber sums to generalize the above construction to obtain other type $(iii)$ points with $\sigma \equiv 8\bmod 16$.

\begin{lemma}\label{lem:F_lambda_disjoint}
	There exists a torus $\Lambda\subset X$ and a genus two surface $F$ which are disjoint, symplectic, and have trivial normal bundles, such that $\pi_1(X-(\nu F\cup \nu \Lambda))=1$.
\end{lemma}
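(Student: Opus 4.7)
The plan is to let $F$ be a regular fiber of $\pi\colon X\to S^2$; as a fiber of a symplectic Lefschetz fibration, it is automatically symplectic with trivial normal bundle. For the Lagrangian (and then symplectic) torus $\Lambda$, I would use the parallel-transport construction attached to a sub-factorization of the global monodromy $(t_{c_1}t_{c_2}t_{c_3}t_{c_4})^{10}$. Specifically, the chain relation gives $(t_{c_1}t_{c_2})^6=t_\delta$, where $\delta$ is the boundary of a regular neighborhood of $c_1\cup c_2\subset\Sigma_2$, i.e.\ a separating curve. Choose a disk $D\subset S^2$ disjoint from $\pi(F)$ enclosing precisely the $12$ critical values corresponding to this sub-factorization, and set $\gamma=\partial D$. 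Then the monodromy around $\gamma$ is isotopic to $t_\delta$.

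Next I would pick a simple closed curve $\alpha$ on $\Sigma_2$ lying in the component of $\Sigma_2\setminus\delta$ disjoint from $c_1\cup c_2$. Since $\alpha$ is disjoint from $\delta$, we have $t_\delta(\alpha)=\alpha$, so the parallel transport of $\alpha$ along $\gamma$ produces an embedded Lagrangian torus $\Lambda\subset\pi^{-1}(\gamma)$ with trivial normal bundle. Gompf's standard perturbation upgrades $\Lambda$ to a symplectic torus without changing the underlying embedding, and by our choice of $D$, the surfaces $\Lambda$ and $F$ are automatically disjoint.

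To verify the $\pi_1$ condition, observe that $\pi_1(X\setminus\nu F)=1$ was already established earlier in the paragraph using the $(-1)$-section of $\pi$, which provides a disk killing both $\mu_F$ and the $\pi_1$-generators of $F$. It therefore suffices to construct a geometric dual sphere $S$ to $\Lambda$, disjoint from $F$, intersecting $\Lambda$ transversely in a single point, so that the meridian $\mu_\Lambda$ becomes null-homotopic in $X\setminus(\nu F\cup\nu\Lambda)$. I would build $S$ as a matched sphere: after Hurwitz-rearranging the factorization, one can arrange that a pair of critical values sharing a common vanishing cycle $\beta$ with $\alpha\cdot\beta=1$ straddles $\gamma$, with one value inside $D$ and the other outside. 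Gluing the two thimbles along $\beta$ on a fiber over $\gamma$ gives an embedded sphere $S$ meeting $\Lambda=\gamma\times\alpha$ transversely in the single point $\alpha\cap\beta$; since $F$ is a single fiber, $S$ can be isotoped off $F$.

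The main obstacle is the combinatorial Hurwitz-move bookkeeping needed to simultaneously (i) expose the sub-factorization $(t_{c_1}t_{c_2})^6$ inside $D$ producing the monodromy $t_\delta$, (ii) place a matched pair of $\beta$-thimbles with $\beta\cdot\alpha=1$ straddling $\gamma$, and (iii) keep $\alpha$ fixed by the possibly enlarged monodromy around $\gamma$. The commutation relations $[t_{c_i},t_{c_j}]=1$ for $|i-j|\geq 2$ (in particular $t_{c_4}$ commuting with $t_{c_1},t_{c_2}$) provide the flexibility needed to do this rearrangement, though coordinating the choice of $\alpha$ and $\beta$ with the rearranged factorization is the delicate point.
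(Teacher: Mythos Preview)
Your overall strategy---Hurwitz-rearrange the factorization to expose the sub-word $(t_{c_1}t_{c_2})^6$, take a parallel-transport Lagrangian torus $\Lambda$ over the resulting loop $\gamma$, and find a matching $(-2)$-sphere dual to it---is exactly the paper's approach. The gap is in your choice of $\alpha$. By placing $\alpha$ in the component of $\Sigma_2\setminus\delta$ \emph{away} from $c_1\cup c_2$, you guarantee that none of the twelve vanishing cycles inside $D$ (all of which are isotopic to $c_1$ or $c_2$) meet $\alpha$; hence no thimble over a critical value inside $D$ can intersect $\Lambda=\gamma\times\alpha$. Your suggested remedy---Hurwitz-moving a $\beta$-twist with $\beta\cdot\alpha=1$ into $D$---forces $\beta$ to be (a conjugate of) $c_3$ or $c_4$, and any such twist spoils the condition that the monodromy around $\gamma$ fixes $\alpha$: for instance $t_{c_3}(c_4)\neq c_4$ and $t_{c_4}(c_5)\neq c_5$. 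So the requirements (ii) and (iii) you flag as ``delicate'' are in fact incompatible for $\alpha$ in the far component, and the commutation relations you cite do not resolve this.

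The paper sidesteps the issue by taking $\alpha=c_2$, which lies in the \emph{same} component as $c_1\cup c_2$ and is still fixed by $(t_{c_1}t_{c_2})^6=t_\delta$. Then $\beta=c_1$ already appears as a vanishing cycle on both sides of $\gamma$---inside from $(t_{c_1}t_{c_2})^6$, outside from the remaining $(t_{c_1}t_{c_2}t_{c_3}t_{c_4})^4$ in the Hurwitz-equivalent factorization $(t_{c_1}t_{c_2})^6\,W\,(t_{c_1}t_{c_2}t_{c_3}t_{c_4})^4$---so the matching $(-2)$-sphere comes for free with no further rearrangement. Note also that this dual sphere is what certifies $[\Lambda]\neq 0$, which is needed \emph{before} you can invoke the perturbation making $\Lambda$ symplectic; in your write-up the order is reversed.
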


\begin{proof}
	Let $\pi\colon X\to S^2$ be the Lefschetz fibration described above with regular fiber $F$. The monodromy factorization for the Lefschetz fibration on $X$ is $\mu = (t_{c_1}t_{c_2}t_{c_3}t_{c_4})^{10}$. This monodromy is Hurwitz equivalent to $\mu' = (t_{c_1}t_{c_2})^6 W (t_{c_1}t_{c_2}t_{c_3}t_{c_4})^4$, where $W$ is a word comprised of six conjugates of $t_{c_3}t_{c_4}$. Hence, we can consider $\mu'$ to be the positive factorization associated to $\pi$. Let $\gamma$ be a simple closed curve on the base $S^2$ encircling the critical values of sub-factorization $(t_{c_1}t_{c_2})^6$ in $\mu'$. Since $(t_{c_1}t_{c_2})^6$ fixes $c_2$, we may take the parallel transport of $c_2$ along $\gamma$ to get a Lagrangian torus $\Lambda$. Similar to Lemma \ref{lem:Em_lag_T2}, this torus is disjoint from $F$ and is homologically essential. To see the latter, we observe that because $c_1$ shows up in the monodromy $\mu'$ on {\it both sides} of the curve $\gamma$, we may form a $(-2)$--sphere out of two $(-1)$--disks bound by $c_1$ which will intersect $\Lambda$ transversely once. Since $\Lambda$ is homologically essential, we may perturb the symplectic form to make it symplectic. The $(-2)$--sphere also provides a null-homotopy for the meridian of $\Lambda$ in $(X-\nu F) - \nu \Lambda$, showing that $\pi_1(X-(\nu \Lambda\cup \nu F))\cong \pi_1(X-\nu F)=1$.
\end{proof}

With this established, we can generalize the above construction by first taking a symplectic fiber sum of $X$ with other simply--connected, spin manifolds along $\Lambda$, and then performing the $\Z_2$--construction to this fiber sum along $F$. Recall $M_n(s)$ from Section \ref{sec:sigma<-32}. One can easily cook up a symplectic torus $T$ in $M_n(s)$ with a trivial normal bundle, e.g. pick a parallel copy of $T_2$ in the second nucleus of $E(2)_K$. Then we take a symplectic fiber sum $X_n(s) := M_n(s) \#_{T=\Lambda} X$. Then $X_n(s)-F$ is still simply-connected by Seifert-Van Kampen. Apply $\Z_2$--construction to obtain $X'_n(s)$ with a double cover $\tilde{X}_n(s)$. The latter is a symplectic and spin because it is a fiber sum of $\tilde{Z}$ and two copies of $M_n(s)$, all of which are symplectic and spin.

Then $X_n(s)$ has invariants $e(X_n(s))= 24s+4n+60$ and $\sigma(X_n(s)) = -16s-40$ and so $X'_n(s)$ has invariants $e(X'_n(s))= 24s+4n+62$ and $\sigma(X'_n(s)) = -16s-40$. Since $\sigma(X'_n(s))$ is not divisible by $16$, it can't be spin, which confirms that it has $w_2$--type $(iii)$. Hence, $Z_n(s)$ is an irreducible manifold with order two $\pi_1$ and intersection form isomorphic to $(2s+5)(-E_8)\oplus (4s+2n+10) H$, where $n,s\geq 0$.

\subsubsection{$3(-E_8)\oplus (2m+8)H$ for $m\geq 5$.} \label{sec:b_even_with_Zm}

We can modify the argument in Section \ref{sec:b_even_sigma_8} to reach coordinates on the $\sigma = -24$ line. Let $X$ be the same $4$--manifold as in Section \ref{sec:b_even_sigma_8}, with embedded surfaces $\Lambda$ and $F$. Consider the manifold $Z_m$ constructed in \cite{baykurhamadaspin}, which is an irreducible copy of $\#_{2m+1} S^2\times S^2$ for $m\geq 5$. As described in Section \ref{sec:sigma_0_generalization}, $Z_m$ is related to a Lefschetz fibration $\mathcal Z_m$ through Luttinger surgery on a chain of $k$ disjoint Lagrangian tori. Let $Z'_m$ be obtained from $\mathcal Z_m$ by performing the first $k-1$ surgeries performed to get $Z_m$, but not performing the last surgery. That way $Z'_m$ has a homologically essential Lagrangian torus $T$, and $Z_m$ is obtained by performing Luttinger surgery to $Z'_m$ along $T$. Then perform the $\Z_2$--construction to $Z'_m\#_{T=\Lambda} X$, again along the genus two surface $F\subset X-\Lambda$. As before, since $\pi_1(X-(F\cup \Lambda))$ is trivial, $Z'_m\#_{T=\Lambda} X - F$ is simply-connected and spin, so the $\Z_2$--construction will have order two $\pi_1$, but will not be spin because of Rokhlin's theorem. This construction reaches coordinates with intersection form given by $3(-E_8)\oplus (2m+8)H$ for $m\geq 5$. 

In the two previous subsections we covered $\sigma = 8$ mod $16$. Next, we will cover coordinates with $\sigma = 0 \bmod 16$.

\subsubsection{$(2s+12)(-E_8) \oplus (4s+2n+32)H$ for $n,s\geq 0$} \label{sec:even_b_0_16}

We will follow the previous subsection but instead of a length 4 chain relation on $\Sigma_2$ we will employ length 8 chain relation on $\Sigma_4$. Most of the arguments will be analogous, so we will be concise.

Start from  the positive factorization $(c_1 \cdots c_8)^{18} $ on $\Sigma_4$ admitting a $(-1)$-section. Call the manifold $X$. By similar arguments as in \ref{sec:b_even_sigma_8}, the complement of the fiber $F$ is simply-connected. The Euler characteristic of $X$ is $e(X)=e(S^2)e(\Sigma_4)+144 = 132$. By Endo's formula $\sigma(X) = 18\cdot 8 \cdot \frac{-5}{9}=-80$. Apply the $\Z_2$--construction to get $Z$ having $e = 138$ and $\sigma = -80$. We claim that $X-\nu F$ admits a spin structure. To show this, we again appeal to the correspondence between spin structures and quadratic forms. The quadratic form $q\colon H_1(\Sigma_2;\Z_2)\to \Z_2$ which sends $x_1,x_3,y_1,y_2,y_3, y_4$ to one and $x_2,x_4$ to zero evaluates to one on each of the vanishing cycles. Hence, the criteria are met for the Lefschetz fibration $X-\nu F\to D^2$ to admit a spin structure. So the double along the fiber is also spin. Note the $(-1)$-section on $X$ lifts to $(-2)$-section on the double and consequently descents to $\RP^2$ with self-intersection $-1$ in $Z$.  Hence, by Proposition \ref{prop:w_2_nonorientable_surface}, $Z$ is non-spin.

Next, we use the constructions from Park and Szabó again \cite{Park_Szabo}. Pick a torus $\Lambda \subset X$ again by applying Hurwitz moves to split $(c_1c_2)^6$ and take a parallel transport along $c_2$. By similar arguments as before, $X- (F\cup \Lambda)$ is simply-connected. Let $M'_n(s)$ denote the $\Z_2$--construction of $M_n(s) \#_{T=\Lambda} X$ along $F$. Since the $(-1)$--section of $X$ is disjoint from $\Lambda$, $Z'_n(s)$ also has an embedded $\RP^2$ with self-intersection $(-1)$, and so is non-spin. To see that its double cover is spin, note that $M_n(s) \#_{T=\Lambda} (X-F)$ is spin. The invariants of $M'_n(s)$ are $e = 24s+4n+162$ and $\sigma = -16s-96$. Hence, $M'_n(s)$ is an irreducible manifold with order two $\pi_1$ and intersection form isomorphic to $(2s+12)(-E_8) \oplus (4s+2n+32)H$. Here, $n,s\geq 0$. 

\subsubsection{$10(-E_8) \oplus (2m+28) H$ for $m\geq 5$.}\label{sec:sigma_80}

This is similar to what we do in Section \ref{sec:b_even_with_Zm}. Let $X$ be the same as in Section \ref{sec:even_b_0_16}, with embedded surfaces $\Lambda$ and $F$. We perform the $\Z_2$--construction on $X\#_{\Lambda = T} Z_m$ along $F$. Since the $(-1)$--section in $X$ is disjoint from $\Lambda$, the $\Z_2$--construction will have an embedded copy of $\RP^2$ with odd self-intersection, so will be non-spin. Since $X\#_{\Lambda = T} Z_m - F$ is spin and simply-connected, the $\Z_2$--construction will have order two $\pi_1$ and a spin universal cover. This realizes coordinates with intersection forms given by $10 (-E_8) \oplus (2m+30) H$ for $m\geq 5$.

\subsubsection{Concluding geography graph for $w_2$ type (iii), $b_2^+$ even.} Figure \ref{fig:plane3} shows the $(a,b)$ points on the geography plane with the irreducibles having intersection forms of $a(-E_8)\oplus bH$. For this $w_2$ type, $a$ can be even or odd. The blue-shaded (resp. red-shaded) region indicates coordinates reached when $a$ is odd (resp. even), i.e. when $\sigma \equiv 8 \bmod 16$ (resp. $\sigma \equiv 0\bmod 16$). Since $b_2^+$ is even, $b$ is even.

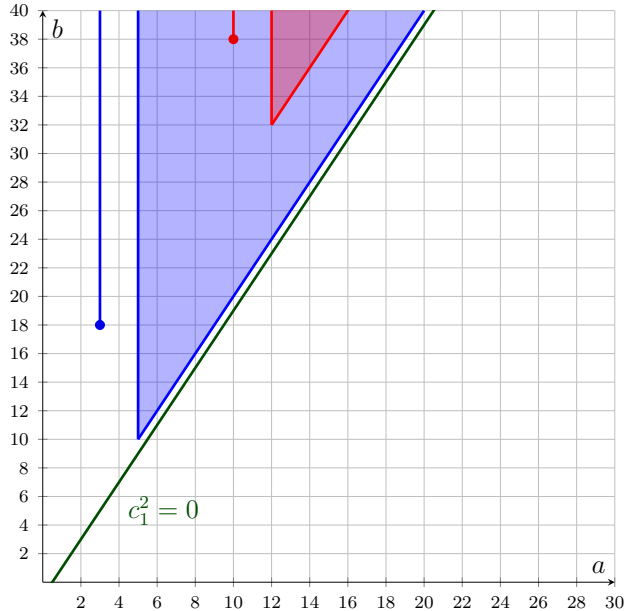
\begin{figure}[h]
		\centering
		\resizebox{!}{3.25in}{
			\begin{tikzpicture}
				\pgfplotsset{
					width= 300,
					height= 300
				}
				\begin{axis}[
					axis lines = middle,
					xlabel = {$a$},
					ylabel = {$b$},
					xmin=0, xmax=30,
					ymin=0, ymax=40,
					xtick = {0,2,...,30},
					ytick = {0,2,...,40}, 
					grid=both,
					ticklabel style = {font=\tiny}]
					
					
					\addplot[name path=slanted_ray, domain=5:20, draw=none] {2*x};
					\path[name path=vertical_ray] (axis cs:5,10) -- (axis cs:5,40);  
					\addplot[fill=blue, fill opacity=0.3] fill between[of=slanted_ray and vertical_ray];
					\addplot[domain=5:20, blue, very thick] {2*x};
					\draw[blue, very thick] (axis cs:5,10) -- (axis cs:5,40);
					\addplot [only marks, mark=*, mark size=2pt, blue!90!black] coordinates {(3,18)};
					\draw[blue, very thick] (axis cs:3,18) -- (axis cs:3,40);
					
					
					\addplot[name path=slanted_ray2, domain=12:20, draw=none] {2*x+8};
					\path[name path=vertical_ray2] (axis cs:12,32) -- (axis cs:12,40);  
					\addplot[fill=red, fill opacity=0.3] fill between[of=slanted_ray2 and vertical_ray2];
					\addplot[domain=12:27, red, very thick] {2*x+8};
					\draw[red, very thick] (axis cs:12,32) -- (axis cs:12,40);
					\draw[red, very thick] (axis cs:10,38) -- (axis cs:10,40);
					\addplot [only marks, mark=*, mark size=2pt, red!90!black] coordinates {(10,38)};

					\addplot[domain=0:22, green!30!black, very thick] {2*x -1};
					\node[green!30!black, right] at (axis cs:4, 5) {$c_1^2=0$};
				\end{axis}
			\end{tikzpicture}
			
		}
		\caption{Even $b_2^+$ and $w_2$-type $(iii)$. $\sigma = 8$ mod $16$ points shaded with blue, and $\sigma = 0$ mod $16$ with red. }
		\label{fig:plane3}
	\end{figure}

\subsection{Odd $\mathbf{b_2^+}$} \label{sec:type3_odd}

\subsubsection{$(2s+3)(-E_8) \oplus (4s+2n+5) H$ for $n,s\geq 0$.} The first idea is to apply the $\Z_2$--construction along a torus on a torus-fiber sum of $E(1)$ and $M_n(s)$ for $s, n\geq 0$. Here, $M_n(s)$ is again a simply-connected, spin 4-manifold from \cite{Park_Szabo} with the invariants $e = 24s+4n+24$ and $\sigma = -16s-16$. 

First, assume $s\geq 1$. By construction, it contains a copy of $E(2s)-F$, where $F$ is a regular fiber. Take a parallel copy $F_1\subset E(2s)$ of $F$ and fiber sum it with a regular fiber $F_2 \subset E(1)$ to obtain $U_{n,s} := M_n(s)\#_{F_1=F_2} E(1)$. Note that this is actually the same as replacing $E(2s)$ by $E(2s+1)$ in the construction of $M_n(s)$. The result is symplectic but not spin. However, it is spin outside a regular fiber $F_2^{||} \subset E(2s+1)$, since all elliptic fibrations are spin after removing a fiber. The next step is to apply the $\Z_2$--construction to $U_{n,s}$ along $F_2^{||}$. Let $\widetilde{U}_{n,s}$ be the double of $U_{n,s}$ along $F_2^{||}$, where the gluing is  given by the involution from Proposition \ref{prop:spin_preserving_map}. To see that $\widetilde U_{n,s}$ is simply--connected, note that the meridian of $F^{||}_2$ in $U_{n,s}-F^{||}_2$ is homotopic to the meridian of $F$ in $E(2s)$ which is, as per construction of $M_n(s)$, identified with the meridian of the torus in the nucleus of a knot surgered $K3$-surface. And that torus comes with a transverse sphere in the nucleus which makes the meridian null-homotopic. This makes $\pi_1(U'_{n,s})$ of order two. Next, $\widetilde{U}_{n,s}$ is spin in accordance with Proposition \ref{prop:spin_preserving_map}. Since $U'_{n,s}$ is not spin by the Rokhlin's theorem, it must have $w_2$--type $(iii)$.

Next, assume $s = 0$. Then $M_n(0)$ contains a copy of $K3-N_1$ (c.f. Remark \ref{rmk:Mn(s)_construction}). Let $T_2$ be a symplectic torus in the second nucleus of $K3$ with a transverse sphere $S_2$. Then let $U_{0,s}$ be the fiber sum of $M_n(0)$ and $E(1)$ along $T_2$ and a regular fiber $F$ on $E(1)$. This is again symplectic, simply-connected, and spin in the complement of a parallel copy $F^{\vert\vert }$ of $F$. The meridian of $F^{\vert\vert }$ bounds a disk since it is homotopic to the meridian of $T_2$, which has a dual sphere. Let $U'_{0,s}$ be the result of the $\Z_2$--construction on $U_{0,s}$ along $F^{\vert\vert }$. The same arguments as above show that $U_{0,s}$ has $w_2$ type $(iii)$ and order two $\pi_1$.

For $n,s\geq 0$, the invariants of $U_{n,s}$ are $e = 24s+4n+36$ and $\sigma = -16s-24$. So are the invariants of $U'_{n,s}$. Since their signatures are congruent to $8\bmod 16$, $U'_{n,s}$ are of $w_2$-type $(iii)$. Finally, the intersection form of $U'_{n,s}$ is isomorphic to $(2s+3)(-E_8) \oplus (4s+2n+5) H$.

\subsubsection{$n(-E_8)\oplus (2n+2m+1) H$ for $n\geq 3$ and odd, and $m\geq 5$.} This construction only covers coordinates that have already been reached by the previous section. We nonetheless include it to demonstrate another way of reaching this part of the geography, and it will serve as a starting point for our construction in the next section. Recall the manifold $Z'_m$ with a symplectically embedded torus $T$ from Section \ref{sec:b_even_with_Zm}. Consider the elliptic surface $E(n)$ for $n\geq 3$ and odd, which has a regular torus fiber $F$ and whose monodromy is $(t_xt_y)^{6n}$. Let $\Lambda\subset E(n)$ be a Lagrangian torus obtained in the usual way, by taking a parallel transport of the curve $x$ along a curve in the $S^2$ base which bounds a subfactorization $(t_xt_y)^6$ (c.f. Lemma \ref{lem:Em_lag_T2}). Note that because $n>1$, $\Lambda$ is homologically essential, so may be considered as a symplectic embedded surface after a perturbation. Consider the symplectic fiber sum $E(n) \#_{\Lambda = T} Z'_m$. This space is simply--connected because for $n>1$, $\Lambda$ has a geometrically dual $(-2)$--sphere formed from cores of the Lefschetz singularity $2$--handles. We may take the double of $E(n) \#_{\Lambda = T} Z'_m$ along $F$, gluing via a free orientation-reversing involution. The resulting manifold is a copy of $E(2n) \#_{T^2} 2Z_m$. Let $Y_{n,m}$ be the $\Z_2$--construction of $E(n) \#_{\Lambda = T} Z'_m$ along $F$. Since $E(n) \#_{\Lambda = T} Z'_m$ is simply--connected, $Y_{n,m}$ has order two $\pi_1$.  The algebraic invariants of $Y_{n,m}$ are $e(Y_{n,m}) = 12n+4m+4$ and $\sigma(Y_{n,m}) = -8n$. Since $n$ is odd, $\sigma(Y_{n,m})= 8\bmod 16$, and so $Y_{n,m}$ has $w_2$--type $(iii)$. Its intersection form is isomorphic to $n(-E_8)\oplus (2n+2m+1) H$. Here $n\geq 3$ and odd, and $m\geq 5$.

\subsubsection{$(-E_8) \oplus (3+2m) H$ for $m\geq 6$.} To realize coordinates on the $\sigma = -8$ line, we need to modify the above construction. As described in \cite[Addendum 11]{baykurhamadaspin}, when $m\geq 6$, the manifold $Z_m$ has a homologically essential torus $T$ with a simply--connected complement. Let $\Lambda$ be the Lagrangian torus in $E(1)$ which is disjoint from a regular fiber $F$. Note that in this case, we can't arrange for $\Lambda$ to be obtained via a parallel transport along a curve which bounds singularities on {\it both sides} because in this case $\Lambda$ is null-homologous. We can nonetheless take the (non-symplectic) fiber sum $Z_n\#_{T=\Lambda} E(1)$, where the particular gluing is not so important. The total space is simply--connected. Next we take the double of $Z_{n}\#_{T=\Lambda}E(1)$ along $F$, gluing by an orientation-reversing free involution. The resulting space is simply--connected, and in fact is a copy of $Z_n\#_{T=\Lambda_2} E(2) \#_{\Lambda_1 = T} Z_n$, where $\Lambda_1$ and $\Lambda_2$ are parallel copies of $\Lambda$ which survive in the fiber sum $E(2) = E(1)\#_{F} E(1)$. Now in this new fiber sum, $\Lambda$ is homologically essential, as the parallel transport curve bounds singularities on both sides. So even though $Z_{n}\#_{T=\Lambda}E(1)$ was not necessarily symplectic, the double $Z_n\#_{T=\Lambda_2} E(2) \#_{\Lambda_1 = T} Z_n$ does admit a symplectic structure. Moreover, the double is simply--connected and admits an orientation-preserving free involution. Quotienting by this involution realizes an irreducible manifold with order two $\pi_1$ and intersection form isomorphic to $(-E_8) \oplus (3+2m) H$. Here $m\geq 6$. 

\subsubsection{$(2s+6)(-E_8)\oplus (4s+2n+13) H$ for $n,s\geq 0$.}\label{sec:b_odd_sigma_0_16}

Consider the maximal chain formed by $7$ simple closed curves $c_i$ on $\Sigma_3$. By the chain relation we have $(t_{c_1}t_{c_2}t_{c_3}t_{c_4}t_{c_5}t_{c_6}t_{c_7})^8=t_{\delta_1}t_{\delta_2}$ where $\delta_1$ and $\delta_2$ are the boundaries of the surface obtained by taking closed regular neighborhood of union $c_i$. This relation gives us a Lefschetz fibration $f\colon X\to S^2$ with two $(-1)$-sections, $\sigma(X) = 7\cdot 8\cdot \frac{-4}{7}=-32$ and $e(X) = 48$. Also, if $F$ is a regular fiber of $X$, then $X-\nu(F)$ is spin and $\pi_1(X-\nu F)=1$. To see the first claim, note that the quadratic form $q:H_1(\Sigma_3,\Z_2)\to \Z_2$, which evaluates to $1$ on $x_1,x_3,y_1,y_2,y_3$ and evaluates to $0$ on $x_2$, evaluates to one on all curves in the $7$-chain in $\Sigma_3$.

Let $Z$ be the manifold obtained by applying the $\Z_2$--construction to $X$ along $F$. Similar to the argument in Section \ref{sec:even_b_0_16} for $b_2^+$ even and $\sigma \equiv 0\bmod 16$, there exists an $\RP^2$ with odd self-intersection in $Z$. The argument in Lemma \ref{lem:F_lambda_disjoint} applies to $X$ as well, showing that there exists a homologically essential Lagrangian torus $\Lambda \subset X$ that's disjoint from $F$. As before $X-(F\cup \Lambda)$ is simply-connected and the $(-1)$--section is disjoint from $\Lambda$. Thus, if we let $Z_n(s)$ be the $\Z_2$--construction to $M_n(s) \#_{T^2=\Lambda} X$ along $F$, then $Z_n(s)$ is irreducible, $\pi_1(Z_n(s))=\Z_2$ and has $w_2$-type $(iii)$. The invariants of $Z_n(s)$ are $\sigma(Z_n(s))=-16s-48$ and $e(Z_n(s))=24s+4n+76$. So its intersection form is $(2s+6)(-E_8)\oplus (4s+2n+13) H$.

\subsubsection{$(k+1)(-E_8)\oplus (2k+1)H$ for $k\geq 1$.}\label{sec:c=0}

Let $\pi_g\colon X_g\to S^2$ be the genus $g$ Lefschetz fibration admitted by $X_g=\CPsum{}{(4g+5)}$ for $g\geq 2$, whose positive factorization is $h^2$, where $h$ is the hyperelliptic involution. Tanaka has shown that $\pi_g$ admits $4g+4$ distinct $(-1)$-sections \cite{tanaka}. Assume $g$ is odd, so $g=2k+1$ for $k\geq 1$. We will perform the $\Z_2$--construction to $X_g$ along a regular fiber $F_g$. Let $\widetilde Z_g$ be the double realized along the way during the $\Z_2$--construction, which is a copy of $(X_g-F_g)\cup_{a\times r} (X_g-F_g)$, with $a$ the antipodal map on $S^1$ and $r$ a reflection map on $F_g$ which fixes each vanishing cycle in the monodromy for $\pi_g$ up to isotopy. As described in \cite[Section 3]{ArabadjiMorgan}, $\widetilde Z_g$ admits a genus $g$ Lefschetz fibration whose positive factorization is $h^2rh^{-2}r^{-1}$, which is equal to $h^4$. Therefore the space $\widetilde Z_g$ is the same as the ordinary untwisted fiber sum of $X_g$ with itself, which is $E(g+1)=E(2k+2)$ \cite[Section 3]{BKS}. Hence, if $Z_g$ is the end product of the $\Z_2$--construction of $X_g$, its double cover $\widetilde Z_g$ is spin. But the $(-1)$-sections of $X_g$ prevent $Z_g$ from also being spin; to see this, just isotope one of the $(-1)$-sections so that it intersects a fixed point of the reflection map $r$ along $F_g$. Then that $(-1)$-section will descend to a copy of $\RP^2$ of odd square in $Z_g$, showing that $Z_g$ is not spin. Hence, $Z_g$ has $w_2$-type $(iii)$. Its algebraic invariants $e$ and $\sigma$ are given by $e(E(2k+2))/2$ and $\sigma(E(2k+2))/2$, respectively. So $Z_g$ has an intersection form of $(k+1)(-E_8)\oplus (2k+1)H$ for $k\geq 1$.

\subsubsection{$2(-E_8)\oplus (5+2m)H$ and $4(-E_8)\oplus (9+2m)H$ for $m\geq 5$.}\label{sec:c=0_extension}

We may take a fiber sum of $X_3$ from the previous section with a copy of $Z'_m$ from Section \ref{sec:b_even_with_Zm} to realize coordinates on the $\sigma = -16$ line. Recall that $Z'_m$ has a homologically essential Lagrangian torus $T$. Present $X_3$ as a union $X\cup X$ of a Lefschetz fibration $X$ over a disk with monodromy $h$. There's also a homologically essential Lagrangian torus $\Lambda\subset X_3$ with a simply--connected complement. This torus is obtained in the usual way, by taking a parallel transport of $c_1$ along the boundary of $X$. There is a dual sphere in $X_3$ which is a union of two $(-1)$-disks, one in each copy of $X$, with boundaries given by the vanishing cycle $c_2$. This means that $\Lambda$ is essential and has a simply-connected complement. Thus, $X_3 \#_{\Lambda = T} Z_m$ is also simply-connected. The complement of $F_3$ in $X_3 \#_{\Lambda = T} Z_m$ is still simply-connected because a section of $X_3$ survives in $X_3 - \Lambda$. Then we perform the $\Z_2$--construction along a regular fiber $F_3$ in $X_3$.  Let $A_{3,m}$ be a result of the $\Z_2$--construction and let $\widetilde{A}_{3,m}$ be its double cover. This double cover is a copy of a fiber sum of $E(16)$ and two copies of $Z'_m$. Hence, the double cover is spin. On the other hand, the $(-1)$-section survives in $A_{3,m}$ as $\RP^2$ with an odd self-intersection, making $A_{3,m}$ non-spin. This realizes coordinates whose intersection forms are $2(-E_8)\oplus (2m+5)H$ for $m\geq 5$. We may do the same thing with $X_7$ instead of $X_3$ and realize the coordinates $4(-E_8)\oplus (2m+9)H$ for $m\geq 5$.

\begin{figure}[h!]
	\centering
	\resizebox{!}{3.25in}{
		\begin{tikzpicture}
			\pgfplotsset{
				width= 300,
				height= 300
			}
			\begin{axis}[
				axis lines = middle,
				xlabel = {$a$},
				ylabel = {$b$},
				xmin=0, xmax=30,
				ymin=0, ymax=40,
				xtick = {0,2,...,30},
				ytick = {0,2,...,40}, 
				grid=both,
				ticklabel style = {font=\tiny}]
				
				
				\addplot[name path=slanted_ray, domain=3:21, draw=none] {2*x-1};
				\path[name path=vertical_ray] (axis cs:3,5) -- (axis cs:3,40);  
				\addplot[fill=blue, fill opacity=0.3] fill between[of=slanted_ray and vertical_ray];
				\addplot[domain=3:20, blue, very thick] {2*x-1};
				\draw[blue, very thick] (axis cs:3,5) -- (axis cs:3,40);
				\addplot [only marks, mark=*, mark size=2pt, blue!90!black] coordinates {(1,15)};
				\draw[blue, very thick] (axis cs:1,15) -- (axis cs:1,40);
				
				
				\addplot[name path=slanted_ray2, domain=6:21, draw=none] {2*x+1};
				\path[name path=vertical_ray2] (axis cs:6,13) -- (axis cs:6,40);  
				\addplot[fill=red, fill opacity=0.3] fill between[of=slanted_ray2 and vertical_ray2];
				\addplot[domain=6:27, red, very thick] {2*x+1};
				\draw[red, very thick] (axis cs:6,13) -- (axis cs:6,40);
				\draw[red, very thick] (axis cs:4,19) -- (axis cs:4,40);
				\addplot [only marks, mark=*, mark size=2pt, red!90!black] coordinates {(4,19)};
				\draw[red, very thick] (axis cs:6,13) -- (axis cs:6,40);
				\draw[red, very thick] (axis cs:2,15) -- (axis cs:2,40);
				\addplot [only marks, mark=*, mark size=2pt, red!90!black] coordinates {(2,15)};

				\addplot[domain=0:22, green!30!black, very thick] {2*x -1};
				\node[green!30!black, right] at (axis cs:4, 5) {$c_1^2=0$};
				
				\draw[red, very thick] (axis cs:2,3) -- (axis cs:20.5,40);
				\addplot [only marks, mark=*, mark size=2pt, red!90!black] coordinates {(2,3)};
				
			\end{axis}
		\end{tikzpicture}
	}
	\caption{Odd $b_2^+$ and $w_2$-type $(iii)$. $\sigma \equiv 8$ mod $16$ points shaded with blue, and $\sigma \equiv 0$ mod $16$ with red.}
	\label{fig:plane4}
\end{figure}
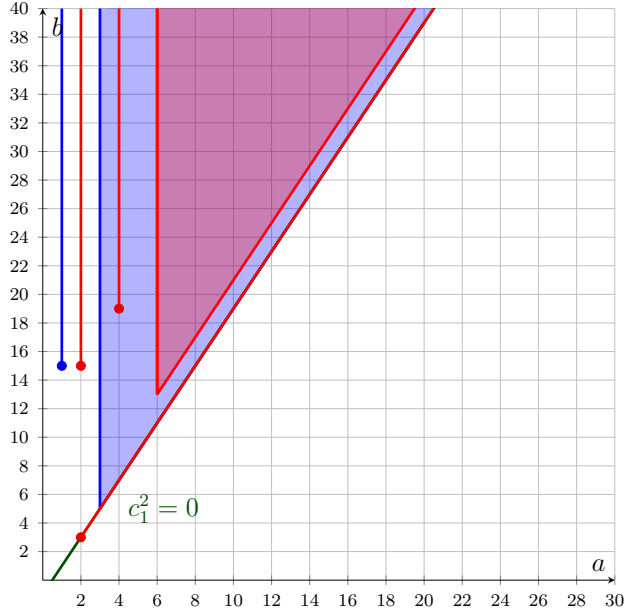

\subsubsection{Concluding geography graph for $w_2$-type (iii), $b_2^+$ odd.} Figure \ref{fig:plane4} shows the $(a,b)$ points on the geography plane with the irreducibles having intersection forms of $a(-E_8)\oplus bH$. The blue-shaded (resp. red-shaded) region indicates coordinates reached when $a$ is odd (resp. even), i.e. when $\sigma \equiv 8 \bmod 16$ (resp. $\sigma \equiv 0\bmod 16$). Since $b_2^+$ is odd, $b$ is odd.

\bibliography{refs} 

\bibliographystyle{alpha} 

\end{document}